\documentclass[10pt]{article}
\usepackage{color}
\usepackage{amssymb}
\usepackage{amsthm,array,amssymb,amscd,amsfonts,latexsym, url}
\usepackage{amsmath}
\usepackage[all]{xy}
\newtheorem{theo}{Theorem}[section]
\newtheorem{prop}[theo]{Proposition}
\newtheorem{claim}[theo]{Claim}
\newtheorem{lemm}[theo]{Lemma}
\newtheorem{coro}[theo]{Corollary}
\newtheorem{rema}[theo]{Remark}

\newtheorem{conj}[theo]{Conjecture}

\title{On the  Chow ring of very general  abelian varieties and a question of Pirola}
\author{Claire Voisin\footnote{The author is supported by the ERC Synergy Grant HyperK (Grant agreement No. 854361).}
}
\date{}

\newfont{\gothic}{eufb10}
\begin{document}

\maketitle
\begin{abstract}  We prove that for a very  general abelian variety of dimension $\geq 4$, a divisor $D\in {\rm CH}^1(A)$ that satisfies $D^2=0$ in ${\rm CH}^2(A)$ is of torsion. The same result is also established for a very general Jacobian in genus $4$. We use then  the second statement in order to prove a conjecture of Pirola, which  states that any rational section of the Kummer fibration $K=J/\pm {\rm Id}\rightarrow \mathcal{M}_4$, where $J\rightarrow \mathcal{M}_4 $ is the Jacobian fibration, must be a multiple of the Griffiths-Pirola section given by the difference of the two trigonal divisors.
 \end{abstract}
\section{Introduction}
Let $A$ be an abelian variety over $\mathbb{C}$. The study of the Chow ring ${\rm CH}^*(A) $ with $\mathbb{Q}$-coefficients has been initiated by Bloch \cite{blochabelian} and  Beauville \cite{beauville} who discovered beautiful structures like  the Beauville decomposition \cite{beauville}, \cite{beauville1} and  the Fourier-Mukai transform \cite{mukai} induced by the abelian group structure of $A$ on one hand,    and the isogeny between $A$ and its dual on the other hand. In \cite{voisinchowab}, we started the study of more  basic questions about  the ring structure of ${\rm CH}^*(A)$. The paper \cite{voisinchowab} focused on the Pontryagin powers of $0$-cycles, but via the Fourier-Mukai transform, this can be reformulated as questions concerning the ordinary ring structure of ${\rm CH}^*(A)$ (see \cite[Lemma 1.7]{voisinchowab}). The basic question is then the following: given $r$,  what can be said about the set of  divisors $D\in {\rm CH}^1(A)_{\rm hom}$ such that  $D^r=0$  in ${\rm CH}^r(A)$? If $r\geq2$, this set is a countable union of closed algebraic subsets of ${\rm Pic}^0(A)$ which contains the torsion points of ${\rm Pic}^0(A)$. In {\it loc. cit.}, it was proved that for a very general abelian variety of dimension $\geq 3$, the set of divisors $D\in {\rm Pic}^0(A)$ such that $D^2=0$ in ${\rm CH}^2(A)$
has dimension $0$.
In the papers \cite{colombopirola}, the locus of abelian varieties for which this statement fails is studied in a very precise way : Away  from decomposable abelian varieties, this locus has dimension $\leq 2g-1$ (so in particular, one recovers the fact that this locus is not the whole of $\mathcal{A}_g$ when $g\geq 3$).

Note that one can ask similar questions for symmetric products  of curves, or for  their Jacobians. Let $C$ be a smooth projective  curve and let  $j: C\hookrightarrow  J(C)$ be an embedding. The pull-back map
$$j^*: {\rm Pic}^0(J(C))\rightarrow  {\rm Pic}^0(C)$$
is an isomorphism, which does not depend on the choice of $j$. Note that we also have the isomorphism
${\rm Pic}^0(C)\cong {\rm Pic}^0(C^{(2)})$, and we can consider, for any $$D\in {\rm Pic}^0(C)\cong {\rm Pic}^0(C^{(2)})\cong {\rm Pic}^0(J(C))$$ the codimension $2$ cycles $D^2\in {\rm CH}^2(J(C))$ and   $D^2\in {\rm CH}^2(C^{(2)})$. It turns out that the vanishing of these two squares are equivalent:
\begin{lemm} \label{lenouveau2804} One has $D^2=0$ in  ${\rm CH}^2(J(C))$ if and only if  $D^2=0$ in ${\rm CH}^2(C^{(2)})$.
\end{lemm}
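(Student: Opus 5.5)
The plan is to compare both squares with the exterior product $d\boxtimes d$ on $C\times C$, where $d\in {\rm Pic}^0(C)$ is the class corresponding to $D$. Chow groups are taken with $\mathbb{Q}$-coefficients, as in the rest of the paper; this matters, since we shall use that pull-back along finite quotient maps and along birational morphisms is injective.

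\emph{Reduction to symmetric products.} Fix a base point $c_0\in C$. For every $k\geq 1$ let $\phi_k: C^{(k)}\rightarrow J(C)$ be the map $z\mapsto \mathcal{O}_C(z-kc_0)$, and let $\pi_k: C^k\rightarrow C^{(k)}$ be the quotient map. For $D\in {\rm Pic}^0(J(C))$, the pull-back $\phi_k^*D$ is the divisor on $C^{(k)}$ whose pull-back to $C^k$ is $\sum_i {\rm pr}_i^*d$. This is independent of $c_0$, because translations act trivially on ${\rm Pic}^0(J(C))$. In particular, for $k=2$ it is exactly the identification ${\rm Pic}^0(J(C))\cong {\rm Pic}^0(C^{(2)})$ used in the statement. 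Since pull-back is a ring homomorphism, $\phi_k^*(D^2)=(\phi_k^*D)^2$. For $k=g$ the map $\phi_g$ is birational and surjective, so $\phi_{g*}\phi_g^*={\rm Id}$ on ${\rm CH}^*(J(C))$ and $\phi_g^*$ is injective. Also $\pi_{k*}\pi_k^*=k!\,{\rm Id}$, so $\pi_k^*$ is injective rationally. Combining these, $D^2=0$ in ${\rm CH}^2(J(C))$ if and only if $\pi_g^*\phi_g^*(D^2)=0$ in ${\rm CH}^2(C^g)$. Likewise, $D^2=0$ in ${\rm CH}^2(C^{(2)})$ if and only if its pull-back to $C^2$ vanishes.

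\emph{Computation on $C^k$.} On $C^k$ we have
$$\Big(\sum_i {\rm pr}_i^*d\Big)^2=\sum_i {\rm pr}_i^*(d^2)+2\sum_{i<j}{\rm pr}_{ij}^*(d\boxtimes d)=2\sum_{i<j}{\rm pr}_{ij}^*(d\boxtimes d),$$
since $d^2\in {\rm CH}^2(C)=0$. For $k=2$ this says that $D^2=0$ in ${\rm CH}^2(C^{(2)})$ if and only if $d\boxtimes d=0$ in ${\rm CH}^2(C\times C)$.

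\emph{Conclusion.} If $d\boxtimes d=0$, then every term on $C^g$ vanishes, so $D^2=0$ on $J(C)$. Conversely, if $D^2=0$ on $J(C)$, we use the easy direction directly: $\phi_2^*(D^2)=(\phi_2^*D)^2=0$ in ${\rm CH}^2(C^{(2)})$. Note that we cannot instead extract $d\boxtimes d=0$ from the vanishing of the sum on $C^g$ term by term, which is why this direction goes through $\phi_2$. If $g=1$, then $C^{(2)}\rightarrow J(C)$ is a $\mathbb{P}^1$-bundle and ${\rm CH}^2(J(C))=0$; here $D^2$ vanishes on both sides directly (on $C^{(2)}$ it is pulled back from ${\rm CH}^2$ of the curve $J(C)$), so the argument above, which needs $\phi_g$ birational onto its image, is only required for $g\geq 2$.

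The main obstacle is the injectivity step. It uses that $\phi_g$ is birational, which holds by Jacobi inversion, and it genuinely requires $\mathbb{Q}$-coefficients; apart from that, the proof is formal.
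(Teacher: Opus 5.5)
Your proof is correct, and it takes a genuinely different route from the paper. For the nontrivial direction the paper stays on $J(C)$. It uses Bloch's formula to turn $D^2=0$ in ${\rm CH}_0(C^{(2)})$ into the vanishing of the Pontryagin square $s^{*2}$ of the $0$-cycle $s=D\cdot C$. The Fourier--Mukai transform then converts this into $F(s)\cdot F(s)=0$, and $D^2=0$ is read off from $F(s)=D+\Gamma$ with $\Gamma$ of codimension $\geq 2$.

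You instead pull back to $C^g$ along $\pi_g$ and the birational map $\phi_g$. Both pullbacks are injective on ${\rm CH}^*$ with $\mathbb{Q}$-coefficients by the projection formula. You then conclude from the identity $(\sum_i {\rm pr}_i^*d)^2=2\sum_{i<j}{\rm pr}_{ij}^*(d\boxtimes d)$, which only uses ${\rm CH}^2(C)=0$.

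The paper's argument ties the lemma to the Pontryagin-product reformulation behind Corollary \ref{remaintro} and to Beauville's Fourier theory. The price is invoking that machinery and the identification $F(D\cdot C)=D+\Gamma$. Yours is elementary: Jacobi inversion and the projection formula suffice. It also yields an intrinsic intermediate criterion: both vanishings are equivalent to $d\boxtimes d=0$ in ${\rm CH}^2(C\times C)_{\mathbb{Q}}$. The same computation extends to higher powers, relevant to the paper's remark on $D^k$. Since ${\rm pr}_i^*(d^2)=0$ kills every non-square-free monomial, $(\sum_i {\rm pr}_i^*d)^k=k!\sum_{|I|=k}{\rm pr}_I^*(d^{\boxtimes k})$. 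Hence $D^k=0$ on $J(C)$, $D^k=0$ on $C^{(k)}$, and $d^{\boxtimes k}=0$ on $C^k$ are all equivalent.

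Two of your side remarks are unnecessary. First, restricting along $C^2\rightarrow C^g$, $(x,y)\mapsto (x,y,c_0,\ldots,c_0)$, does isolate $d\boxtimes d$, because all other terms are pulled back through maps constant in some factor. This is just your easy direction through $\phi_2$ again. Second, for $g=1$ the map $\phi_1$ is an isomorphism, so the general argument already covers that case.
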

\begin{proof} Clearly the first vanishing implies the second one by restriction from $J(C)$ to $C^{(2)}$. In the other direction, assume that $D^2$ vanishes in ${\rm CH}_0(C^{(2)})$. Then  by Bloch's formula \cite[Lemma 1.1)(ii)]{blochabelian}, one deduces that $s^{*2}=0$ in ${\rm CH}_0(J(C))$ where $s:=D\cdot C\in {\rm CH}_0(J(C))$ and $*$ is the Pontryagin product. Denoting by $F$  the Fourier-Mukai transform of $J(C)$ (see \cite{beauville}) it follows that
\begin{eqnarray}\label{eqintro} F(D\cdot C)\cdot  F(D\cdot C)=0\,\,{\rm in}\,\,{\rm CH}^*(J(C))\end{eqnarray}
since $F$ exchanges the Pontryagin product and the intersection product. Equation (\ref{eqintro}) then implies $D^2=0$ in ${\rm CH}^2(J(C))$ because $F(D\cdot C)=D+ \Gamma$ where $\Gamma$ is a sum of cycles of codimension $\geq2$.
\end{proof}

Our first main result in the present paper is the following
\begin{theo}\label{theoabelian} (i) Let $A $ be a very general abelian variety of dimension at least $4$. Then, if $D\in {\rm CH}^1(A)_{\rm hom}={\rm Pic}^0(A)$, and $D^2=0$ in ${\rm CH}^2(A)$, $D$ is of torsion.

(ii)  Let $C$ be a very general curve of genus $ 4$. Then if $D\in {\rm Pic}^0(C)={\rm Pic}^0(C^{(2)})$ and $D^2=0$ in ${\rm CH}^2(C^{(2)})$ (or equivalently by Lemma \ref{lenouveau2804}, $D^2=0$ in ${\rm CH}^2(J(C))$), $D$ is of torsion.
\end{theo}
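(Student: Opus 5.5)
We argue by contradiction, using a spreading-out argument over the moduli space. Suppose the statement fails for very general $A$. The locus of pairs $(A,D)$ with $D\in{\rm Pic}^0(A)$ and $D^2=0$ in ${\rm CH}^2(A)$ is a countable union of closed algebraic subsets of the universal dual abelian scheme $\widehat{\mathcal A}\to \mathcal{A}_g$ (or a finite cover with level structure). If non-torsion such $D$ exist for very general $A$, one of these components $Z$ dominates $\mathcal{A}_g$ and is not contained in the torsion multisections.

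By the result of \cite{voisinchowab} recalled in the introduction, for very general $A$ of dimension $\geq 3$ the fibres of $Z$ are finite. So $Z\to\mathcal A_g$ is generically finite, and after base change it gives a rational multisection $\sigma$ of $\widehat{\mathcal A}$ that is not torsion. We then use the classical rigidity: the monodromy of the universal family acts on $H_1$ through a finite-index subgroup of ${\rm Sp}(2g,\mathbb Z)$, and it forces the normal function of a multisection to be torsion. Concretely, the infinitesimal invariant of $\sigma$ is a section of $H^{0,1}$ twisted by $\Omega_{\mathcal A_g}$, and the relevant Koszul-type group vanishes. Hence $\sigma$ is torsion, which is a contradiction.

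For (i) the main obstacle is that this argument is too soft on its own. The condition $D^2=0$ gives no extra structure that rules out a non-torsion $\sigma$, unless we know that non-torsion multisections of $\widehat{\mathcal A}\to\mathcal A_g$ do not exist. For the full $\mathcal A_g$ they do not exist, since the Mordell–Weil group of the universal family is torsion, so this step is fine. For (ii), however, the Jacobian fibration over $\mathcal M_4$ does carry non-torsion sections, namely the Griffiths–Pirola section. So the real content must come from the condition $D^2=0$ itself.

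Therefore, for (ii) (and to be safe also for (i)), I would instead analyse $D^2=0$ infinitesimally along the family. Write $D=\mathcal O(\alpha)$ with $\alpha$ given by the normal function. By Lemma \ref{lenouveau2804}, $D^2=0$ is equivalent to $s^{*2}=0$ with $s=D\cdot C$, and also to the vanishing of $D^2$ on $C^{(2)}$. By Mumford–Bloch–Srinivas, vanishing of $0$-cycles in a family yields a decomposition. Differentiating along the moduli space gives, via Griffiths' infinitesimal invariant, a relation of the form $\delta(\alpha)\cup\alpha'=0$ in a Koszul cohomology group of $(C,K_C)$ (respectively of $(A,L)$). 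This means the infinitesimal invariant of the normal function lies in the kernel of a multiplication map $H^0(K_C)\otimes \ker\to \dots$.

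For $g=4$ one computes with the canonical curve on a quadric cone/smooth quadric $Q\subset\mathbb P^3$ that this kernel forces the infinitesimal invariant to vanish. For $\dim A\ge4$ the analogous computation with $H^0(A,L)$ and ${\rm Sym}^2 H^{1,0}$ gives the same conclusion. Then $\alpha$ is locally constant, hence torsion by monodromy.

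The hardest step is showing that the Koszul/multiplication map has the required injectivity precisely in these dimensions, with genus $4$ and dimension $\ge 4$. I would first try to check it by an explicit linear algebra computation at a generic point, exploiting that the quadric containing the canonical curve is unique.
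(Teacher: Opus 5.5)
\textbf{There is a genuine gap, and it sits exactly at the step you call the hardest.} Your claim that, at a general point, the condition $(\delta D)^2=0$ forces $\delta D=0$ is false in every dimension.

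Write $V=H^{1,0}(A)$ and $W_2={\rm Sym}^2V$ (resp.\ $V_2=W_2/q$). The infinitesimal invariant lives in $I_{1,W}=(V^*\otimes W_2)/V$. Its square is computed by the $2\times 2$-minors map $\overline{M}_{2,W}:{\rm Sym}^2I_{1,W}\to I_{2,W}$, so the condition is quadratic; it is not the injectivity of a linear Koszul-type map.
\begin{itemize}
\item Any rank-one tensor $\phi=v^*\otimes w$ has $\wedge^2\phi=0$.
\item Its class in $I_{1,W}$ is nonzero, because the elements $\mu_v$ of $V$ are injective maps, of rank $g$.
\end{itemize}
So the zero set of the square map contains a whole nonzero cone, and no computation at a generic point can remove it. In fact no local argument can. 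Take $\tilde\nu=f(t_1)e$ with $e$ a flat section of $\mathcal{H}^1$. This is a local normal function with $\nabla\tilde\nu=e\otimes f'(t_1)dt_1$, so its infinitesimal invariant is nonzero with identically vanishing square.

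What the pointwise linear algebra does give is Proposition~\ref{propcarrenul}: $\delta D$ has a unique lift of rank $\leq 1$ in $V^*\otimes W_2$ (resp.\ $V^*\otimes V_2$). The paper proves this by restricting to $G(2,V)$, using injectivity of $r_1$ for $g\geq 4$, and showing that a rank $\leq 1$ section of $\mathcal{H}om(\mathcal{S},{\rm Sym}^2\mathcal{E})$ factors through the trivial line bundle.

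\textbf{The missing idea is a global, transcendence-type argument for the rank-one case.} The paper proceeds as follows.
\begin{itemize}
\item It upgrades the rank-one condition on $(\nabla\tilde\nu)^{0,1}$ to $\nabla\tilde\nu=\beta\otimes\alpha$ with $\alpha$ integrable. This uses $\nabla^2=0$ and a Hopf-lemma dimension count, which is where $g\geq 4$ enters.
\item By uniqueness of the lift, this is a global section of $\mathcal{H}^1\otimes\Omega_M$ that is $\nabla$-closed and locally decomposable. It is algebraic, via the algebraic group scheme of line bundles with flat connection on the fibres.
\item The Ax--Schanuel theorem of Bl\'azquez-Sanz--Casale--Freitag--Nagloo forces the leaves of $\alpha$ to be algebraic. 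A monodromy/invariant-cycle argument then shows this section is zero.
\item Only then does Lemma~\ref{lepourlatorsion} give torsion. Note that this lemma needs injectivity of $\overline{\nabla}$ and $\overline{\nabla}_1$, not just $\delta\nu=0$, so ``$\delta\nu=0$ implies locally constant'' is also not automatic.
\end{itemize}

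\textbf{Your opening ``soft'' argument for (i) is also wrong.} The component $Z$ only gives a section over a generically finite cover $M\to\mathcal{A}_g$. The universal family has many non-torsion multisections, for instance points of $A_\eta(\overline{\mathbb{C}(\mathcal{A}_g)})$. Also, the space $I_{1,W}$ of infinitesimal invariants is far from zero. So the hypothesis $D^2=0$ is essential in (i) as well, not only in (ii).
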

 Using \cite[Lemma 1.7]{voisinchowab}, one can also rephrase Theorem \ref{theoabelian} as follows, using the Pontryagin product of $0$-cycles on $A$. Denote by $0_A$  the origin of an abelian variety $A$ (it is neeeded if we allude to the group structure of $A$, which appears in the definition of the Pontryagin product).
 \begin{coro} \label{remaintro}  For a  very general abelian variety $A$ of dimension $g\geq 4$, the set of points $x\in A$ such that $(\{x\}-\{0_A\})^{*2}=0$ in ${\rm CH}_0(A)$ consists of the torsion points of $A$. The same statement holds  if $A$ is a very general Jacobian in genus $4$.
\end{coro}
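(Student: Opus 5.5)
The plan is to deduce Corollary \ref{remaintro} from Theorem \ref{theoabelian} through the dictionary of \cite[Lemma 1.7]{voisinchowab}, which links Pontryagin powers of $0$-cycles on $A$ to ordinary powers of divisors on the dual abelian variety $\widehat{A}$. To a point $x\in A$ we attach the divisor class $D_x\in{\rm Pic}^0(\widehat A)$ given by the Poincar\'e bundle restricted to $\{x\}\times\widehat A$. The map $x\mapsto D_x$ is the group isomorphism $A\cong {\rm Pic}^0(\widehat A)$, so $x$ is torsion if and only if $D_x$ is torsion. The Fourier--Mukai transform $F$ of \cite{beauville} sends Pontryagin product to intersection product, up to sign conventions. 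It also sends $\{x\}$ to $\exp(D_x)$ in ${\rm CH}^*(\widehat A)_{\mathbb Q}$, and $\{0_A\}$ to the fundamental class $1$. Hence
$$F\big((\{x\}-\{0_A\})^{*2}\big)=(e^{D_x}-1)^2=D_x^2\Big(1+\tfrac{D_x}{2}+\cdots\Big)^2 .$$
The second factor is a unit in ${\rm CH}^*(\widehat A)_{\mathbb Q}$, and $F$ is an isomorphism on rational Chow groups. Therefore $(\{x\}-\{0_A\})^{*2}=0$ in ${\rm CH}_0(A)$ if and only if $D_x^2=0$ in ${\rm CH}^2(\widehat A)$. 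This is the content of \cite[Lemma 1.7]{voisinchowab}, which I would simply cite, checking only that it is stated with $\mathbb Q$-coefficients as the paper uses throughout.

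For the first statement, the key step is that if $A$ is a very general abelian variety of dimension $g\geq4$, then so is $\widehat A$. I would take the polarization to be principal, as ``very general abelian variety'' presumably means a very general point of $\mathcal A_g$. Then $\widehat A\cong A$ via the principal polarization, and $D_x$ corresponds to the class of $t_x^*\Theta-\Theta$. If instead one works with a fixed polarization type, duality gives an isomorphism between moduli spaces of dual polarization types, and this isomorphism maps very general points to very general points. The countable union of proper closed subsets avoided by $\widehat A$ pulls back to such a union in the moduli space of $A$. Applying Theorem \ref{theoabelian}(i) to $\widehat A$, the condition $D_x^2=0$ forces $D_x$ to be torsion, so $x$ is torsion. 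Conversely, torsion points clearly satisfy the vanishing: if $D_x$ is torsion it is zero in ${\rm CH}^1_{\mathbb Q}$, so $D_x^2=0$.

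For Jacobians, take $A=J(C)$ with $C$ a very general curve of genus $4$. Then $J(C)$ is principally polarized, so $\widehat{J(C)}\cong J(C)$, and $x\mapsto D_x$ becomes a map $J(C)\to{\rm Pic}^0(J(C))\cong{\rm Pic}^0(C)$. Up to sign this map is an isomorphism compatible with $j^*$. Theorem \ref{theoabelian}(ii), combined with Lemma \ref{lenouveau2804}, says that $D^2=0$ in ${\rm CH}^2(J(C))$ forces $D$ to be torsion, and this gives the claim.

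The only real point to watch is the identification of $F(\{x\})$ with $e^{D_x}$ and the sign and normalization conventions in $F$. These conventions affect nothing, since we only test vanishing and the correction factor is a unit.
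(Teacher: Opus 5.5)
Your proposal is correct and is essentially the paper's argument. The paper just invokes \cite[Lemma 1.7]{voisinchowab}, which is exactly your Fourier transform computation $F\big((\{x\}-\{0_A\})^{*2}\big)=(e^{D_x}-1)^2=D_x^2\cdot(\text{unit})$, and then applies Theorem \ref{theoabelian} to $\widehat A\cong A$ via the principal polarization (resp.\ the Jacobian case). Your aside about other polarization types is unnecessary, and as written it would not suffice, since Theorem \ref{theoabelian}(i) is proved only over $\mathcal{A}_g$; an isogeny argument would be needed there.
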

\begin{coro} If $A$ is  a very general abelian  variety   of dimension $g\geq 4$ or   the Jacobian  of a very general  curve of genus $4$, the orbit
$O_{2\{0_A\}}\subset A^{(2)}$ for rational equivalence on $A$ of the $0$-cycle $2\{0_A\}\in A^{(2)}$, that is, the set
of $z\in A^{(2)}$ such that $z$ is rationally equivalent to $2\{0_A\}$ in $A$, consists only of (effective, degree $2$) $0$-cycles supported on torsion points of $A$.
\end{coro}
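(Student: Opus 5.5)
The plan is to deduce this corollary from Corollary \ref{remaintro}, that is, from the Pontryagin reformulation of Theorem \ref{theoabelian}. Let $z=\{x\}+\{y\}\in A^{(2)}$ with $\{x\}+\{y\}=2\{0_A\}$ in ${\rm CH}_0(A)$. The aim is to rewrite this rational equivalence as the vanishing of a Pontryagin square of the form $(\{t\}-\{0_A\})^{*2}$, and then to apply Corollary \ref{remaintro} to the point $t$.

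First step. Applying the albanese (sum) map ${\rm CH}_0(A)\rightarrow A$ to the relation gives $x+y=0$ in $A$, so $y=-x$. The hypothesis therefore reads
$$\{x\}+\{-x\}-2\{0_A\}=0\quad{\rm in}\ {\rm CH}_0(A).$$

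Second step. I would translate this into a Pontryagin product. Since $\{a\}*\{b\}=\{a+b\}$, we have $(\{x\}-\{0_A\})*(\{-x\}-\{0_A\})=\{0_A\}-\{x\}-\{-x\}+\{0_A\}$, which is exactly minus the left-hand side above. So the hypothesis is equivalent to
$$(\{x\}-\{0_A\})*(\{-x\}-\{0_A\})=0.$$
Now take the Pontryagin product of this identity with $\{x\}$. Since $\{x\}*(\{-x\}-\{0_A\})=\{0_A\}-\{x\}$, we obtain
$$(\{x\}-\{0_A\})*(\{0_A\}-\{x\})=-(\{x\}-\{0_A\})^{*2}=0.$$
Hence $(\{x\}-\{0_A\})^{*2}=0$ in ${\rm CH}_0(A)$.

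Third step. Corollary \ref{remaintro} then says that $x$ is a torsion point of $A$, in both the case of a very general abelian variety of dimension $g\geq 4$ and the case of a very general Jacobian of genus $4$. Consequently $y=-x$ is also torsion, so $z$ is supported on torsion points.

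The argument is formal and I expect no serious obstacle. The two points that need care are that the albanese map identifies $y$ with $-x$, and that the Pontryagin product is associative and commutative with $\{0_A\}$ as unit, so that multiplying by $\{x\}$ is legitimate. This uses ${\rm CH}_0$ with $\mathbb{Q}$-coefficients, which is the paper's convention; torsion in ${\rm CH}_0(A)$ is irrelevant to the conclusion.
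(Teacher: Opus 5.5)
Your proof is correct, and it reaches the key implication by a different route from the paper.

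The paper does no computation here. It quotes \cite[Proposition 1.9]{voisinchowab}: for every $k$, a relation $\{x_1\}+\cdots+\{x_k\}=k\{0_A\}$ in ${\rm CH}_0(A)$ forces $(\{x_i\}-\{0_A\})^{*k}=0$ for each $i$. It then applies Corollary \ref{remaintro} with $k=2$.

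You prove the $k=2$ case by hand instead. The Albanese map gives $y=-x$. Pontryagin-multiplying the relation by $\{x\}$ (i.e.\ translating it by $x$) then gives directly
$$\{2x\}-2\{x\}+\{0_A\}=(\{x\}-\{0_A\})^{*2}=0,$$
so your intermediate factorization is not needed. Your route is elementary and self-contained, and it gives the vanishing already with integral coefficients. The cited proposition buys uniformity in $k$: for $k\geq 3$ the Albanese map only gives $\sum_i x_i=0$, and your trick no longer isolates a single point.

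One small correction to your closing remark. The Albanese step has to be done with integral coefficients. That is legitimate, because the orbit in the statement is defined by genuine rational equivalence. With the relation only in ${\rm CH}_0(A)_{\mathbb{Q}}$, you would only get that $\tau:=x+y$ is torsion. You would then need the extra (easy) fact that $\{\tau\}=\{0_A\}$ in ${\rm CH}_0(A)_{\mathbb{Q}}$ for torsion $\tau$. Integral vanishing of $(\{x\}-\{0_A\})^{*2}$ implies rational vanishing, so Corollary \ref{remaintro} applies as you say.
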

 \begin{proof}  By \cite[Proposition 1.9]{voisinchowab}, we know that a rational equivalence relation
 \begin{eqnarray}\label{eqnnewdu14juin} \{x_1\}+\ldots+\{x_k\}= k\{0_A\} \,\,{\rm in}\,\,{\rm CH}_0(A)\end{eqnarray}
 implies that $(\{x_i\}-\{0_A\})^{*k}=0 $  in ${\rm CH}_0(A)$ for $i=1,\ldots,\,k$. Thus, for $k=2$, when $A$ is very general, (\ref{eqnnewdu14juin})    implies that each  $x_i$ is a torsion point of $A$ by Corollary  \ref{remaintro}. \end{proof}
\begin{rema}{\rm It is likely that Theorem \ref{theoabelian}(ii) also holds for any genus $g\geq4$. The extra work needed to make the same proof work in higher genus is to prove  Proposition \ref{propcarrenul}(ii) in higher genus.}
\end{rema}

\begin{rema}{\rm The situation is unclear in genus $3$, but for abelian surfaces, there are uncountably many points $x\in A$ such that $(\{x\}-\{0_A\})^{*2}=0$ in ${\rm CH}_0(A)$ (in particular there are nontorsion points). Indeed, let $A=J(C)$ for a genus $2$ curve $C\subset A$ and assume that $C$ passes through $0_A$, and that the corresponding point $x_0\in C$ is a Weierstrass point. Then, by the same argument as above,  any $x\in C\subset A$ has the property that  $(\{x\}-\{0_A\})^{*2}=0$ in ${\rm CH}_0(A)$, since, for some $y\in C$, $\{x\}+\{y\}$ is rationally equivalent to $2x_0$ in $C$, hence to $2\{0_A\}$  in $A$.}
\end{rema}
\begin{rema}{\rm It is likely that the method used to prove Theorem \ref{theoabelian} also allows to study higher powers. One may even conjecture that if $g\geq 2k$, and $A$ is a very general abelian variety of dimension $g$,  the only divisors $D\in {\rm CH}^1(A)$ such that $D^k=0$ in $ {\rm CH}^k(A)$ are the torsion divisors.}
\end{rema}
Our motivation to prove Theorem \ref{theoabelian} was   an intriguing and seemingly unrelated  question of Pirola related to the Franchetta conjecture for generic curves. The Franchetta conjecture (now a theorem, see \cite{mestrano}) says that if $C$ is the generic curve of genus $g$, then ${\rm Pic}(C)$ is the cyclic group generated by $K_C$.
If one considers  a general curve of genus $4$ over $\mathbb{C}$, it is naturally embedded in a $2$-dimensional quadric in its canonical embedding, hence it has two  $g_3^1$'s, say  $D_1,\,D_2$, coming from the restrictions of the two rulings of the quadric to $C$.
The difference $D_1-D_2\in {\rm Pic}^0(C)$ is a divisor of degree $0$ on $C$, nontrivial in general, which is canonically defined  up to sign, so for the generic curve of genus $4$, which is defined over $\mathbb{C}(\mathcal{M}_4)$, it provides an element of ${\rm Pic}^0(C_L)$ defined over a degree $2$-field extension $L$ of the function field $\mathbb{C}(\mathcal{M}_4)$. Equivalently, it provides a relative divisor on the universal curve $\widetilde{\mathcal{C}}\rightarrow \widetilde{\mathcal{M}}_4$ for some  degree $2$ cover $\widetilde{\mathcal{M}}_4\rightarrow \mathcal{M}_4$.

We will call the corresponding  section $\gamma_P$ of ${\rm Pic}^0(\widetilde{\mathcal{C}}/\widetilde{\mathcal{M}}_4)$ the Griffiths-Pirola normal function. It has been studied indeed in \cite{griffiths}  by Griffiths, who explained how to recover the general curve $C$ of genus $4$ from the infinitesimal invariant of this normal function at $[C]\in  \mathcal{M}_4$ (see also \cite{verni} and Section \ref{secreview}). We can also see $\gamma_P$ as a rational section of the Kummer fibration ${K}={J}/\pm {\rm Id}\rightarrow \mathcal{M}_4$.
 The following  result answers a question of Pirola.
\begin{theo}\label{theoquestionpirola} Let $\mathcal{C}\rightarrow  \mathcal{M}_4$ be the universal curve of genus $4$, and ${J}\rightarrow \mathcal{M}_4 $ the associated Jacobian fibration. Then all rational sections of the fibration ${K}={J}/\pm {\rm Id}\rightarrow \mathcal{M}_4$ are obtained  by taking  multiples of the  Griffiths-Pirola normal function $\gamma_P$.
\end{theo}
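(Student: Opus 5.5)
Given a rational section $\sigma$ of $K\rightarrow\mathcal{M}_4$, the plan is to lift it, after a finite base change, to a rational section $\tilde\sigma$ of the Jacobian fibration, i.e. a relative divisor of degree $0$, and then to show that a suitable combination of $\tilde\sigma$ and $\gamma_P$ has vanishing square. Concretely, we pass to a generically finite cover $\mathcal{M}'\rightarrow\mathcal{M}_4$ (dominating $\widetilde{\mathcal{M}}_4$) over which $\sigma$ lifts to $\tilde\sigma\in{\rm Pic}^0(\mathcal{C}_{\eta'})$. We then use the monodromy of $\mathcal{M}'$, and its Galois closure over $\mathcal{M}_4$, acting on the lifts. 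Since the lift is only defined up to sign, each Galois conjugate of $\tilde\sigma$ is $\pm\tilde\sigma$, and likewise for $\gamma_P$.

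The key geometric input will be a Franchetta-type result for codimension $2$ cycles. The class $\tilde\sigma^2$, computed on $J(C)$ or on $C^{(2)}$, is invariant under sign change. It therefore descends to a cycle defined generically over $\mathcal{M}_4$ itself, i.e. to a codimension $2$ cycle on the generic fiber of $\mathcal{C}^{(2)}\rightarrow\mathcal{M}_4$, or on $J\rightarrow\mathcal{M}_4$. The same holds for $\gamma_P^2$ and for the product $\tilde\sigma\cdot\gamma_P$ after fixing compatible signs on the degree $2$ cover. I would then invoke (or prove, in the paper's Section on genus $4$) a statement of the form: the Chow group ${\rm CH}^2$ of the generic $C^{(2)}$ modulo the tautological classes is controlled, so that any cycle defined over $\mathcal{M}_4$ whose cohomology class vanishes is rationally equivalent to a tautological one. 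Tautological classes restricted to ${\rm CH}^2(C^{(2)})_{\rm hom}$ should come only from $K_C$, the diagonal and $\gamma_P$. This is presumably what the paper's ``Proposition \ref{propcarrenul}(ii)'' does in genus $4$: it identifies the subspace of homologically trivial cycles of the form $D^2$ that are defined generically over $\mathcal{M}_4$. The expected conclusion is that any such class is a rational multiple of $\gamma_P^2$, so $\tilde\sigma^2=\lambda\gamma_P^2$ in ${\rm CH}^2(C^{(2)})$.

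The plan is then to produce an element $D=a\tilde\sigma+b\gamma_P$ with $D^2=0$. The bilinear form $(D,D')\mapsto D\cdot D'$ on the $\mathbb{Q}$-span of $\tilde\sigma$ and $\gamma_P$ takes values in a space of invariant cycles. Applying the Franchetta-type statement to $\tilde\sigma^2$, $\tilde\sigma\gamma_P$ and $\gamma_P^2$ shows that these all lie in the line $\mathbb{Q}\gamma_P^2$, so the quadratic form on a $2$-dimensional space has rank at most $1$ with values in a line. Hence some nonzero $D=a\tilde\sigma+b\gamma_P$ with $a\neq0$ has $D^2=0$ (if $\gamma_P^2=0$ this is immediate; one should also check $\gamma_P^2\neq0$ separately, but it is not needed). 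By Theorem \ref{theoabelian}(ii), applied at a very general point of $\mathcal{M}'$, $D$ is torsion. Thus $a\tilde\sigma=-b\gamma_P$ up to torsion, and by rigidity of sections of Jacobians (the Mordell–Weil group modulo torsion) we get that $\sigma$ is a multiple of $\gamma_P$, up to torsion sections. Finally, the only torsion rational sections of $J$ over $\mathcal{M}_4$ are $0$, because the monodromy acts on $H_1(C,\mathbb{Z}/n)$ through the full ${\rm Sp}$ group, which fixes no nonzero vector even up to sign. This would give the statement, possibly after replacing ``multiple'' by ``rational multiple'' and clearing denominators via the same torsion argument.

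The main obstacle is the Franchetta-type step: showing that the codimension $2$ classes $\tilde\sigma^2$, $\tilde\sigma\gamma_P$ and $\gamma_P^2$, which descend to $\mathcal{M}_4$ or to its double cover, are proportional in ${\rm CH}^2$ of a very general fiber. I would first try to spread them into cycles on the universal $\mathcal{C}^{(2)}$ and use the known generalized Franchetta results for symmetric squares of generic curves. Failing that, I would rely on whatever the paper proves in Proposition \ref{propcarrenul} about the structure of such cycles.
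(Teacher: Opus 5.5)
Your first step is correct and coincides with the paper's. Since $\tilde\sigma$ is anti-invariant under the involution of its double cover, $\tilde\sigma^2$ descends (with $\mathbb{Q}$-coefficients) to $\mathcal{M}_4$, and the Franchetta-type result of Verni, Theorem \ref{proverni}, gives $\tilde\sigma^2=\lambda\gamma_P^2$ (Proposition \ref{profirststep}). Note that this input is Theorem \ref{proverni}, not Proposition \ref{propcarrenul}(ii); the latter only says that an infinitesimal invariant with vanishing square has a rank $\leq 1$ lift.

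The gap is the cross term. $\tilde\sigma$ lives on a double cover $\mathcal{M}'_4$ determined by $\sigma$, and $\gamma_P$ lives on $\widetilde{\mathcal{M}}_4$. There is no common degree $2$ cover unless these coincide, and proving that they do (for nontorsion $\sigma$) is essentially the whole difficulty of Pirola's question. If they differ, the Galois group of the fiber product is generated by $i_1,i_2$, where $i_1$ flips only $\tilde\sigma$ and $i_2$ flips only $\gamma_P$. So $\tilde\sigma\cdot\gamma_P$ changes sign under both, its push-forward to the generic fiber over $\mathcal{M}_4$ is $0$, and Theorem \ref{proverni} says nothing about it.

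Moreover, even granting $\tilde\sigma\cdot\gamma_P=\kappa\gamma_P^2$, your form is $(\lambda a^2+2\kappa ab+b^2)\gamma_P^2$ with $\gamma_P^2\neq 0$ (Theorem \ref{theoverni}). A one-dimensional target does not force rank $\leq 1$; that requires $\kappa^2=\lambda$. A nonzero rational isotropic vector exists only if $\kappa^2-\lambda$ is a square in $\mathbb{Q}$. So no $D=a\tilde\sigma+b\gamma_P$ with $a\neq 0$ and $D^2=0$ is actually produced for Theorem \ref{theoabelian}(ii).

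The missing idea is to extract the cross-term information infinitesimally rather than in the Chow group. The paper takes infinitesimal invariants of $\tilde\sigma^2=\lambda\gamma_P^2$ and gets $\overline{M}_{2,V}(\delta\tilde\sigma)=\lambda\,\overline{M}_{2,V}(\delta\gamma_P)$ in $I_{2,V}$ (Corollary \ref{corolecarrenul}). Proposition \ref{propcarreeqgalcarre} then shows this forces either $\lambda=0$ or $\delta\tilde\sigma=\pm\sqrt\lambda\,\delta\gamma_P$, by restricting to the two rulings of the quadric $Q$:
\begin{itemize}
\item $\alpha_i(\delta\gamma_P)$ has rank $2$ everywhere (Lemma \ref{lerank2}), so $\alpha_i(\delta\tilde\sigma)=\sqrt\lambda\,\alpha_i(\delta\gamma_P)\circ\psi_i$;
\item the $\psi_i$ are forced to be trivial because the generic cubic on $Q$ has no nontrivial automorphism;
\item $(\alpha_1,\alpha_2)$ is injective (Lemma \ref{leinji12}).
\end{itemize}
If $\lambda=0$, Theorem \ref{theoabelian}(ii) applies to $\tilde\sigma$ directly. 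Otherwise the argument of Lemma \ref{lepourlatorsion} gives $\tilde\sigma=\sqrt\lambda\,\gamma_P$ modulo $H^1_{\mathbb{Q}(\sqrt\lambda)}$, and $\sqrt\lambda\in\mathbb{Q}$ because the Griffiths class $[\gamma_P]$ is rational and nonzero.

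Finally, ``clearing denominators via the same torsion argument'' does not get you from $N\gamma=N'\gamma_P$ to an integral multiple. That step needs non-divisibility of $\gamma_P$. The paper gets it by restricting to a Lefschetz pencil of $(2,3)$-curves on $Q$, where $\gamma_P$ needs no base change and $\gamma$ is shown to lift, and then using primitivity of $[\gamma_P]$ in $H^2(\widetilde{Q}^0,\mathbb{Z})$. The monodromy argument only disposes of torsion sections of $K$ (Lemma \ref{letorsion4mai}).
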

Note that a  result related to Theorem \ref{theoquestionpirola} had been  established in \cite{hangxue}, where it is proved that, on the given double cover $\widetilde{\mathcal{M}}_4\rightarrow \mathcal{M}_4$, the only sections of the pulled-back Jacobian fibration $\widetilde{J}\rightarrow \widetilde{\mathcal{M}}_4$ are the multiples of  ${\gamma}_P$.

Theorem \ref{theoquestionpirola} suggests the following conjecture  (proposed by Pirola and Farkas).
\begin{conj} For $g\geq 5$, the only rational section of the Kummer fibration  ${K}={J}/\pm {\rm Id}\rightarrow \mathcal{M}_g$ is the zero section.
\end{conj}
The proof of Theorem \ref{theoabelian} uses the theory of Griffiths infinitesimal invariants $\delta\nu$ of normal functions $\nu$ \cite{griffiths}, and also its extension to higher codimensional cycles introduced  in \cite{voisinsurfaces}, where rational equivalence of $0$-cycles of very general surfaces in $\mathbb{P}^3$ was studied (see Section \ref{secreview}). For the proof of Theorem \ref{theoabelian}, one has to analyze the square map for infinitesimal invariants. For a normal function $\nu$ defined on a generically finite cover of $\mathcal{A}_g$,  $g\geq4$, with the property that $(\delta\nu)^2=0$, this formal analysis provides us with distinguished representative of the  cohomology class $[\nu]$ given by a closed and decomposable algebraic section of  the bundle $\mathcal{H}^1\otimes{\Omega}_B$. Results of Ax-Shanuel type (see \cite{nagloo}, \cite{bakker}, \cite{baldi}) then  imply that such a form must be $0$, from which the conclusion that $\nu$ is of torsion follows in  a rather classical way.

The proof of Theorem \ref{theoquestionpirola} uses another ingredient. The main difficulty in Pirola's question, which makes it rather different from Franchetta's conjecture,  is the fact that the rational  sections $\gamma$ that we want to describe correspond to  sections of the Jacobian fibration  over a double cover of $\mathcal{M}_4$ that is not specified. Our starting point is the fact that, the divisor $D_\gamma(t)\in {\rm Pic}^0(C_t)={\rm Pic}^0(C_t^{(2)})$ being defined up to sign, the codimension $2$ cycle $D_\gamma(t)^2\in {\rm CH}^2(C_t^{(2)})$ is defined over the function field of $\mathcal{M}_4$.  One easily deduces from this that it is a multiple of $D_{\gamma_P}(t)^2$ (which is also proportional to the Green-Griffiths $0$-cycle of \cite{greengriffiths}, see  \cite{verni}). Our proof consists then in analyzing, using again the theory of infinitesimal invariants, the rational equivalence relation
$$D_\gamma^2=\lambda D_{\gamma_P}^2\,\,{\rm in}\,\,{\rm CH}^2(C_t^{(2)}).$$

\vspace{0.5cm}

{\bf Thanks.} {\it This work was inspired by lectures of Gian Pietro Pirola at the workshop ``Topics in Hodge theory'' at the university of Ashoka. I thank the organizers of this  nice event. I also thank Greg Baldi for   the  proof of Proposition \ref{lecructresgenpourpropourcrcutheoab}, and Bruno Klingler and Gavril Farkas for their  interest and constructive  comments.}
\section{A review of infinitesimal invariants\label{secreview}}
Infinitesimal invariants of normal functions were first introduced by Griffiths (see \cite{griffiths}).
Assume that we have an abelian fibration $\pi: J\rightarrow M$. Let $D$ be a divisor on $J$, which is homologous to $0$ along the fibers of $\pi$. There is an associated
 section $\nu_D$  of  the sheaf $\mathcal{J}^{\vee} $ of holomorphic sections of the dual abelian fibration ${J}^{\vee}:={\rm Pic}^0(J/M)\rightarrow M$, such that
 $\nu_D(m):=D_{\mid J_m}$. By uniformization, this sheaf fits into an exact sequence
\begin{eqnarray}\label{eqexcatpourJ}  0\rightarrow H^{1}_\mathbb{Z}\oplus \mathcal{H}^{1,0}\rightarrow \mathcal{H}^1\rightarrow \mathcal{J}^{\vee}\rightarrow 0,
\end{eqnarray}
where $H^1_\mathbb{Z}:=R^1\pi_*\mathbb{Z},\, \mathcal{H}^1:=H^1_\mathbb{Z}\otimes\mathcal{O}_M$, and $\mathcal{H}^{1,0}\subset \mathcal{H}^{1}$ are the Hodge bundles for the weight $1$ variation of Hodge structure on $H^{1}_\mathbb{Z}$.

The Griffiths infinitesimal invariant $\delta\nu_D(m)$ at the point $m\in M$ is defined as follows: using the exact sequence (\ref{eqexcatpourJ}), one can choose local (for the Euclidean topology)  lifts $$\tilde{\nu}\in\Gamma(\mathcal{H}^1)$$ of $\nu_D$.
Such local lifts are unique up to the addition of  sections $\nu_\mathbb{Z}$ of $H^1_\mathbb{Z}$ and $\nu^{1,0}$ of $\mathcal{H}^{1,0}$. It follows that $\nabla\tilde{\nu}\in \Gamma(\mathcal{H}^1\otimes \Omega_M)$ is defined up to adding a section $\nabla\nu^{1,0}\in \Gamma(\mathcal{H}^1\otimes \Omega_M)$, and thus  the $(0,1)$-projection $(\nabla\tilde{\nu})^{0,1}\in \Gamma(\mathcal{H}^{0,1}\otimes \Omega_M)$ of $\nabla\tilde{\nu}$ is defined modulo
$\overline{\nabla}\mathcal{H}^{1,0}$, where
$$\overline{\nabla}:\mathcal{H}^{1,0}\rightarrow \mathcal{H}^{0,1}\otimes \Omega$$
is the infinitesimal variation of Hodge structures of our family.
As $\overline{\nabla}$ is $\mathcal{O}_M$-linear, the quantity
\begin{eqnarray}\label{eqinfinvt} \delta\nu_D(m):=(\nabla\tilde\nu)^{0,1}_m\in (\mathcal{H}^{0,1}_m\otimes\Omega_{M,m})/\overline{\nabla}(\mathcal{H}^{1,0}_m)
\end{eqnarray}
is thus  well defined at any point $m\in M$. It computes part of  the derivative of the normal function $\nu_D$ and depends only on the data of $\nu_D$ at first order at $m$.
We will use the following
\begin{lemm}\label{lepourlatorsion} Let $\pi:J\rightarrow M$ be a family of abelian varieties of dimension $g$ with corresponding  sheaf  $\mathcal{J}^\vee$ as above and let  $\nu\in \Gamma(M,\mathcal{J}^\vee)$ be a   normal function. Assume that $\nu$ has trivial infinitesimal invariant at the general point  of $M$, and that

(i) The Griffiths differential $\overline{\nabla}:\mathcal{H}^{1,0}\rightarrow  \mathcal{H}^{0,1}\otimes \Omega_M$ is injective (that is, pointwise injective at the general point of $M$).

(ii) The second Griffiths differential $\overline{\nabla}_1:\mathcal{H}^{1,0}\otimes\Omega_M\rightarrow  \mathcal{H}^{0,1}\otimes \Omega_M^2$ is injective (that is, pointwise injective at the general point of $M$).

(iii) The monodromy  group acting on the degree $1$ cohomology of the fibers of the family  $J\rightarrow M$ has finite index in ${\rm Sp}(2g)$.

Then $\nu$ is a   torsion section of $\mathcal{J}^\vee$.
\end{lemm}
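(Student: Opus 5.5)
Let $\tilde\nu$ be a local holomorphic lift of $\nu$ to $\mathcal{H}^1$. The plan is to show that the vanishing of $\delta\nu$ lets us modify $\tilde\nu$ by a section of $\mathcal{H}^{1,0}$ so that it becomes flat. Such a flat lift is a multivalued flat section of $H^1_\mathbb{C}$ that is well defined modulo $H^1_\mathbb{Z}$. Monodromy finiteness then forces it to be rational.

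\textbf{Step 1: a lift with holomorphic $(1,0)$-derivative.} Since $\delta\nu=0$ at the general point, hence everywhere on a Zariski open set by analyticity, $(\nabla\tilde\nu)^{0,1}=\overline{\nabla}\alpha$ for some local section $\alpha$ of $\mathcal{H}^{1,0}\otimes\Omega_M$. Hypothesis (i) makes $\overline{\nabla}$ injective, hence a subbundle map on a dense open set, so $\alpha$ is unique and holomorphic. Replacing $\tilde\nu$ by $\tilde\nu'$ with $\nabla\tilde\nu'$ having zero $(0,1)$-part would require $\alpha=\nabla^{1,0}\beta$ for a section $\beta$ of $\mathcal{H}^{1,0}$, which is not automatic. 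So I will instead use flatness of $\nabla$. Write $\nabla\tilde\nu=\omega$ with $\omega^{0,1}=\overline{\nabla}\alpha$, and set $\eta:=\omega-\alpha$, computed in $\mathcal{H}^1\otimes\Omega_M$ via $\mathcal{H}^{1,0}\subset\mathcal{H}^1$. Then $\eta^{0,1}=0$, so $\eta\in\mathcal{H}^{1,0}\otimes\Omega_M$.

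\textbf{Step 2: second-order analysis.} Apply $\nabla$ once more. Since $\nabla^2\tilde\nu=0$, we get $\nabla\eta=-\nabla\alpha$ in $\mathcal{H}^1\otimes\Omega^2_M$. Project to $\mathcal{H}^{0,1}\otimes\Omega^2_M$: for sections of $\mathcal{H}^{1,0}\otimes\Omega_M$, the $(0,1)$-part of $\nabla$ is exactly $\overline{\nabla}_1$. This gives $\overline{\nabla}_1(\eta+\alpha)=0$, hence $\eta+\alpha=0$ by (ii). But $\eta+\alpha=\omega$, so $\nabla\tilde\nu=0$ already, and $\tilde\nu$ is flat.

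This step is the one to get right. One must check the identity $(\nabla\alpha)^{0,1}=\overline{\nabla}_1\alpha$, including signs in the Koszul-type extension of $\overline{\nabla}$ to forms; it holds because $\nabla$ applied to $\sigma\otimes\phi$ gives $\nabla\sigma\wedge\phi+\sigma\otimes d\phi$, and the second term lies in $\mathcal{H}^{1,0}$. One must also handle the fact that (i) and (ii) hold only generically, by working on a dense open set and extending by analytic continuation.

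\textbf{Step 3: torsion.} With $\tilde\nu$ flat, analytic continuation of $\tilde\nu$ along a loop $\gamma$ is again a lift of $\nu$, flat and differing from $\tilde\nu$ by a section of $H^1_\mathbb{Z}\oplus\mathcal{H}^{1,0}$. A flat section of $\mathcal{H}^{1,0}$ is zero, since by (i) no nonzero flat section stays in $\mathcal{H}^{1,0}$. Hence $\rho(\gamma)\tilde\nu-\tilde\nu\in H^1_\mathbb{Z}$ for all $\gamma$, where $\rho$ is the monodromy action on the fiber $H^1(J_m,\mathbb{C})$ and $\tilde\nu$ denotes the fixed vector given by its value at the base point. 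So the class of $\tilde\nu$ in $H^1_\mathbb{C}/H^1_\mathbb{Z}$ is invariant under the monodromy group $\Gamma$.

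Under (iii), $\Gamma$ contains a finite-index subgroup $\Gamma_N$ of ${\rm Sp}(2g,\mathbb{Z})$ acting trivially on $H^1_\mathbb{Z}/NH^1_\mathbb{Z}$. For $\gamma\in\Gamma_N$, the invariant vector $(\rho(\gamma)-1)\tilde\nu$ lies in $H^1_\mathbb{Z}$. The cocycle $\gamma\mapsto(\rho(\gamma)-1)\tilde\nu$ is a coboundary over $\mathbb{C}$, and I will show $\tilde\nu\in H^1_\mathbb{Q}+(H^1_\mathbb{C})^{\Gamma_N}$. Since $\Gamma_N$ is Zariski dense in ${\rm Sp}(2g)$, there are no nonzero invariants, so $\tilde\nu\in H^1_\mathbb{Q}$. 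Concretely, take $\gamma$ whose $\rho(\gamma)-1$ is invertible over $\mathbb{Q}$; such $\gamma$ exist in $\Gamma_N$, for instance hyperbolic elements. Then $\tilde\nu=(\rho(\gamma)-1)^{-1}(\text{integral vector})\in H^1_\mathbb{Q}$. Hence $\nu$ is torsion.
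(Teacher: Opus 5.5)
Your Steps 1--2 contain a genuine error. The map $\overline{\nabla}$ goes from $\mathcal{H}^{1,0}$ to $\mathcal{H}^{0,1}\otimes\Omega_M$. So the vanishing of $\delta\nu$ means $(\nabla\tilde\nu)^{0,1}=\overline{\nabla}\alpha$ with $\alpha$ a local section of $\mathcal{H}^{1,0}$, not of $\mathcal{H}^{1,0}\otimes\Omega_M$.

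With the wrong type for $\alpha$, Step 2 proves too much. It concludes $\nabla\tilde\nu=0$ for the \emph{arbitrary} lift you started with, and this is false. If $\tilde\nu$ is flat, then for any nonzero $\beta\in\Gamma(\mathcal{H}^{1,0})$, $\tilde\nu+\beta$ is another lift of $\nu$ with $(\nabla(\tilde\nu+\beta))^{0,1}=\overline{\nabla}\beta\neq 0$ by (i).

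The faulty step is applying $\overline{\nabla}_1$ to $\eta+\alpha=\omega=\nabla\tilde\nu$. The identity ``$(0,1)$-part of $\nabla$ equals $\overline{\nabla}_1$'' holds only on $\mathcal{H}^{1,0}\otimes\Omega_M$. Now $\omega$ lies there only if $\omega^{0,1}=0$, which is exactly what is not known. Likewise, if $\alpha$ were in $\mathcal{H}^{1,0}\otimes\Omega_M$, then $\eta^{0,1}=\omega^{0,1}$, so your claim $\eta^{0,1}=0$ also fails.

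The correction you dismissed as ``not automatic'' is in fact immediate, and it is what the paper does. Take $\alpha\in\Gamma(\mathcal{H}^{1,0})$; it is unique and holomorphic by (i) on the open set where $\overline{\nabla}$ has constant rank. The $(0,1)$-part of $\nabla\alpha$ is $\overline{\nabla}\alpha$ by the very definition of the Griffiths differential. Hence $\tilde\nu':=\tilde\nu-\alpha$ is again a lift of $\nu$, and it satisfies $\nabla\tilde\nu'\in\Gamma(\mathcal{H}^{1,0}\otimes\Omega_M)$. Your Step 2 computation then applies correctly to $\omega':=\nabla\tilde\nu'$: from $\nabla\omega'=0$ one gets $\overline{\nabla}_1\omega'=0$, hence $\omega'=0$ by (ii).

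With this fix, Step 3 is correct and follows the paper: uniqueness of flat lifts modulo $H^1_\mathbb{Z}$ via (i), then the monodromy argument. Your explicit final step is a clean justification of what the paper states in one line. You choose $\gamma$ in the monodromy group with $\rho(\gamma)-1$ invertible; it exists because a finite-index subgroup of ${\rm Sp}(2g,\mathbb{Z})$ is Zariski dense and the elements with eigenvalue $1$ form a proper closed subset. The detour through $\Gamma_N$ is unnecessary.
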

In (ii), the map $\overline{\nabla}_1$ is defined by the formula $\overline{\nabla}_1(\omega\otimes \alpha)=\overline{\nabla}\omega\wedge\alpha$.
Note that Condition (ii) implies Condition (i), but we will need the two injectivity statements in the proof.
\begin{rema}{\rm Condition (ii) is actually essential. Consider the case of a $1$-dimensional basis $M$ and assume that (i) is satisfied. Then  $\overline{\nabla}:\mathcal{H}^{1,0}\rightarrow  \mathcal{H}^{0,1}\otimes \Omega_M$ is an isomorphism  at the general point of $M$. Thus the space of  infinitesimal invariants is trivial in this case, so any normal function has trivial infinitesimal invariant at the general point, while of course there always exist nontorsion normal functions, maybe after making a base change.
}
\end{rema}
\begin{proof}[Proof of Lemma \ref{lepourlatorsion}] The argument   is classical and appears in the proof of \cite[Theorem 7.22]{voisinbook} in a slightly different geometric context. We reproduce it here for completeness. Note that it suffices to prove the conclusion after restriction to a dense Zariski open set of $M$, which  does not change  assumption (iii).
Thus we can assume that  $\delta\nu=0$ at any  point of $M$.  This implies  that any  local lift $\tilde{\nu}_0$ of $\nu$ satisfies $(\nabla\tilde{\nu}_0)^{0,1}=(\nabla \eta)^{0,1}$ in $\Gamma(\mathcal{H}^{0,1})$ for some local section $\eta\in \Gamma(\mathcal{H}^{1,0})$, so that the local lift $\tilde{\nu}:=\tilde{\nu}_0-\eta$ of $\nu$ has the property that
\begin{eqnarray}\label{eqpournabaltikd} \nabla{\tilde{\nu}}\in \Gamma(\mathcal{H}^{1,0}\otimes \Omega_M).\end{eqnarray}
We observe now that, since $\nabla\circ\nabla=0$, we have
\begin{eqnarray}\label{eqpournurondnu}
\nabla(\nabla\tilde{\nu})=0.
\end{eqnarray} Then by taking the $(0,1)$-part in (\ref{eqpournurondnu}) and using (\ref{eqpournabaltikd}), we get that
$\overline{\nabla}_1(\nabla\tilde{\nu})=0$. Thus  condition (ii) implies that
we have in fact
\begin{eqnarray}\label{eqpournabaltikdzero} \nabla{\tilde{\nu}}=0,\end{eqnarray}
that is, $\tilde{\nu}$ is a horizontal section of $\mathcal{H}^{1}$.
 Any other local horizontal  local lift $\tilde{\nu}'$ of $\nu$ is of the form
 $$\tilde{\nu}'=\tilde{\nu}+\eta_\mathbb{Z}+\eta^{1,0}$$
  for some local sections $\eta_\mathbb{Z}\in \Gamma(H^1_\mathbb{Z})$ and $\eta^{1,0}\in \Gamma(\mathcal{H}^{1,0})$. We then have
   \begin{eqnarray}\label{eqflateta} \nabla(\eta^{1,0})=0,\end{eqnarray} as  $\tilde{\nu}', \tilde{\nu}$ and $\eta_\mathbb{Z}$ are horizontal. By Condition (i),  (\ref{eqflateta}) implies that $\eta_F$  vanishes identically. It follows that the horizontal local lifts $\tilde{\nu}$ of $\nu$ are unique up to sections of $H^1_\mathbb{Z}$.
We thus proved that  the global section
$\nu\in \Gamma(M,\mathcal{J})$ is induced by a global section $$\overline{\tilde{\nu}}\in \Gamma(M, H^1_\mathbb{C}/H^1_\mathbb{Z}).$$ If $\rho: \pi_1(M,m_0)\rightarrow {\rm Aut}(H^1(J_{m_0},\mathbb{Z}))$ denotes the monodromy representation of our family, such a global section $\overline{\tilde{\nu}}$ gives an element of
$ H^1(J_{m_0},\mathbb{C}/\mathbb{Z})$ which is invariant under $\rho$, or equivalently, an element $\tilde{\nu}_{m_0}\in H^1(J_{m_0},\mathbb{C})$ satisfying $\rho_\gamma(\tilde{\nu}_{m_0})-\tilde{\nu}_{m_0}\in H^1(J_{m_0},\mathbb{Z})$ for any $\gamma\in \pi_1(M,m_0)$. As the image of the monodromy representation contains a finite index subgroup of ${\rm Sp}(H^1(J_{m_0},\mathbb{Z}))$, this last condition is satisfied only when $\tilde{\nu}_{m_0}\in H^1(J_{m_0},\mathbb{Q})$, hence $\overline{\tilde{\nu}}$, and a fortiori $\nu$, is of torsion.
\end{proof}

In \cite{voisinincras}, \cite{voisinICM94}, the following  cohomological  interpretation of $\delta\nu$ is given. Writing the exact
sequence
\begin{eqnarray}\label{eqexactkodspen} 0\rightarrow \pi^*\Omega_M\rightarrow \Omega_J\rightarrow \Omega_{J/M}\rightarrow 0,\end{eqnarray}
restricting it to the fiber $J_m$,
and using the fact that the connecting map of the associated long exact sequence is the map $\overline{\nabla}$ (a fundamental result due to Griffiths, see \cite[Section 5.1.2]{voisinbook}),  one gets a canonical isomorphism
$$H^1(J_m,\Omega_{J\mid J_m})^0\cong (\mathcal{H}^{0,1}_m\otimes\Omega_{M,m})/\overline{\nabla}(\mathcal{H}^{1,0}_m),$$
where on the left  $H^1(J_m,\Omega_{J\mid J_m})^0\subset H^1(J_m,\Omega_{J\mid J_m})$ is defined as the kernel of the restriction map
$$H^1(J_m,\Omega_{J\mid J_m})\rightarrow H^1(J_m,\Omega_{J_m}).$$

\begin{theo}\cite{voisinincras} Let  $D$ be a divisor on $J$ which is homologous to $0$ along the fibers of $\pi$, and $\nu_D$ be the associated normal function. One has \begin{eqnarray}\label{eqpourintcohdedeltanu}
\delta\nu_D(m)=[D]^{1,1}_{\mid J_m}\,\,{\rm in}\,\, H^1(J_m,\Omega_{J\mid J_m})^0,
\end{eqnarray}
where  $[D]^{1,1}\in H^1(J,\Omega_J)$ is the Dolbeault cycle class of $D$, and $[D]^{1,1}_{\mid J_m}$ is its restriction to $J_m$.
\end{theo}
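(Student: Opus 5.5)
The plan is to compare two computations of the same derivative. On one side sits the Griffiths definition (\ref{eqinfinvt}), which uses a local lift of $\nu_D$ to $\mathcal{H}^1$. On the other side sits the connecting map of the sequence (\ref{eqexactkodspen}) restricted to $J_m$, applied to the Dolbeault class of $D$. The statement is local on $M$, so we may work over a small polydisk $U\ni m$. Since $D$ is homologous to $0$ on the fibers, after shrinking $U$ the class $[D]\in H^2(J_U,\mathbb{Z})\cong H^2(J_m,\mathbb{Z})$ vanishes, because $J_U$ retracts onto $J_m$. Hence $[D]^{1,1}$ restricted to $J_m$ maps to $0$ in $H^1(J_m,\Omega_{J_m})$. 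So the left-hand side of (\ref{eqpourintcohdedeltanu}) does lie in $H^1(J_m,\Omega_{J\mid J_m})^0$, which identifies with $H^0(\Omega_{M,m})\otimes H^1(\mathcal{O}_{J_m})$ modulo the image of $H^0(J_m,\Omega_{J_m})=\mathcal{H}^{1,0}_m$ under $\overline{\nabla}$.

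To compare the two sides concretely, I will choose a representative of the line bundle $\mathcal{O}(D)$ on $J_U$. On the universal cover $\mathcal{H}^{1}_U/H^1_\mathbb{Z}$ description, the line bundle $\mathcal{O}(D)_{\mid J_t}$ for $t\in U$ is the translate $t_{\tilde\nu(t)}$ of the trivial bundle. A local lift $\tilde\nu(t)\in H^1(J_t,\mathbb{C})$ determines a $\overline\partial$-closed $(0,1)$-form on $J_t$: its $(0,1)$-part, which is a constant form in flat coordinates. This gives a relative connection-type description: $\mathcal{O}(D)$ on $J_U$ is isomorphic to $\mathcal{O}_{J_U}$ endowed with the $\overline{\partial}$-operator $\overline\partial+\tilde\nu^{0,1}$, where $\tilde\nu^{0,1}$ is viewed as a $(0,1)$-form on $J_U$ via a $C^\infty$ trivialization $J_U\cong J_m\times U$ by flat real coordinates. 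The trivialization may be taken so that the fiberwise restriction is the correct one, and the ambiguity is absorbed by integral periods.

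Next, the Chern class in Dolbeault cohomology is $\overline{\partial}$ applied to the $\partial$-part of the log of the Hermitian metric. Equivalently, I will use the Atiyah class. The Atiyah class of $\mathcal{O}(D)$ in $H^1(J_U,\Omega_{J_U})$ is represented by $\partial$ of the $(0,1)$-form above, taken in the holomorphic directions. Restricted to $J_m$, the component in $\Omega_{J_m}$ vanishes, because $\tilde\nu^{0,1}$ is constant along fibers. What remains is the component in $\pi^*\Omega_M$, which is $\sum_i dt_i\otimes \partial_{t_i}(\tilde\nu^{0,1})$, that is $(\nabla\tilde\nu)^{0,1}_m$. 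Finally, I check that changing the lift by $\nu^{1,0}$ changes this class by $\overline\nabla\nu^{1,0}$. This holds by Griffiths' identification of the connecting map with $\overline\nabla$, which lets us identify the class with $\delta\nu_D(m)$ in the quotient.

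The main obstacle is the middle step. The point is to write $\mathcal{O}(D)$ on the whole family, not just fiberwise, by a globally defined $(0,1)$-form whose derivative in the base directions is exactly $\partial_t$ of the $(0,1)$-projection of the lift. The difficulty is that the splitting $H^1=H^{1,0}\oplus H^{0,1}$ varies with $t$. I would handle this with flat real coordinates: $\tilde\nu$ is a real-linear functional, giving a closed real $1$-form $\theta$ on the universal cover of $J_U$ with $d\theta$ computing $[D]$ up to an exact form. The $(1,1)$-part of $d\theta^{0,1}$ restricted to $J_m$ then produces the formula, and the extra terms coming from the variation of the Hodge splitting should cancel modulo $\overline\nabla\mathcal{H}^{1,0}_m$.
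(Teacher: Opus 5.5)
The paper does not prove this statement; it only quotes it from \cite{voisinincras}, so your argument has to stand on its own. Your route is to realize $\mathcal{O}(D)_{\mid J_U}$ as the trivial $C^\infty$ bundle with a twisted $\overline{\partial}$-operator and then compute its Atiyah class. That is a sound, hands-on approach for divisors, and it can be completed. As written, however, it stops exactly at the two points that carry the content, and the object in your last paragraph is misdescribed.

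\textbf{The form $\theta$ and integrability.} Write $\tilde\nu=\sum_i a_i e_i$ in a flat frame $(e_i)$ of $H^1_{\mathbb{Z}}$ over $U$. Let $x_i$ be the dual flat real coordinates on $J_U\cong(\mathbb{R}/\mathbb{Z})^{2g}\times U$, and set $\theta:=\sum_i a_i(t)\,dx_i$.
\begin{itemize}
\item This $\theta$ is a complex $1$-form on $J_U$ itself, not a real closed form on the universal cover. Indeed $d\theta=\sum_i da_i\wedge dx_i$, and this non-closedness is what produces the class; a closed $\theta$ would give $0$.
\item Your $\beta$ should be $\theta^{0,1}$, with types taken for the complex structure of $J_U$.
\item The step you never verify is integrability: $\overline{\partial}(\theta^{0,1})=(d\theta)^{0,2}=0$. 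This holds because the $a_i$ are holomorphic, so each $da_i$ has type $(1,0)$; this is where holomorphy of the lift enters.
\item Since $\theta^{0,1}_{\mid J_t}$ is the invariant form representing $\tilde\nu(t)^{0,1}$, the line bundle $L$ defined by $\overline{\partial}+\theta^{0,1}$ agrees with $\mathcal{O}(D)$ on every fibre. You still need a seesaw argument to get $L\cong\mathcal{O}(D)_{\mid J_U}$: $\pi_*(L\otimes\mathcal{O}(-D))$ is a line bundle on the polydisk $U$, hence trivial. The phrase ``the ambiguity is absorbed by integral periods'' does not do this.
\end{itemize}

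\textbf{The Hodge-splitting term.} Your claim that the $\pi^*\Omega_M$-component of $\partial\beta_{\mid J_m}$ ``is $(\nabla\tilde\nu)^{0,1}_m$'' is false as an identity in $\Omega_{M,m}\otimes H^{0,1}(J_m)$. The ``should cancel'' of your last sentence is where the proof actually lies.
\begin{itemize}
\item Since $\partial(\theta^{0,1})=(d\theta)^{1,1}-\overline{\partial}(\theta^{1,0})$, the restriction of $\partial\theta^{0,1}$ to $J_m$ is $(\nabla\tilde\nu)^{0,1}_m=\sum_i da_i(m)\otimes(dx_i)^{0,1}_{\mid J_m}$ minus a correction term.
\item The correction is $\overline{\partial}$ of $\theta^{1,0}$, viewed as a $C^\infty$ section of $\Omega_{J\mid J_m}$ lifting the holomorphic form $\tilde\nu(m)^{1,0}$. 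By Griffiths' description of the connecting map, it represents $\overline{\nabla}(\tilde\nu(m)^{1,0})$.
\item So the identity holds only modulo $\overline{\nabla}\mathcal{H}^{1,0}_m$, and this is where Griffiths' identification is really needed. Your change-of-lift check, by contrast, is superfluous, since both sides of the statement are intrinsic.
\end{itemize}

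A simpler fix is to use the connection $d+\theta$, which is compatible with $\overline{\partial}+\theta^{0,1}$, instead of $\partial+\overline{\partial}+\theta^{0,1}$. The $(1,1)$-part of its curvature is $\sum_i da_i\wedge(dx_i)^{0,1}$. It differs from $\partial\theta^{0,1}$ by the $\overline{\partial}$-exact form $\overline{\partial}\theta^{1,0}$, so it represents the same Dolbeault class. It has no $\Omega_{J_m}$-component along $J_m$, and it restricts to exactly $(\nabla\tilde\nu)^{0,1}_m$ because $\nabla\tilde\nu=\sum_i da_i\otimes e_i$. With this choice the ``main obstacle'' disappears.

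Two small points: fix the factors $2\pi i$ and the signs once, via the exponential sequence. In your first paragraph you mean the right-hand side, not the left-hand side.
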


This result allowed us   in \cite{voisinsurfaces}, \cite{voisinICM94} to  define more generally  infinitesimal invariants
$$\delta Z (m)\in H^k(Y_m,\Omega_{Y\mid Y_m}^k)$$
at any point $m\in M$, for higher codimensional cycles
$$Z\subset Y$$
in a smooth  variety $Y$ admitting a smooth fibration $\pi:Y\rightarrow M$  over a base $M$. These infinitesimal invariants are obtained from
the Dolbeault class $[Z]^{k,k}\in H^k(Y,\Omega_Y^k)$, which provides, by restriction to the fibers $Y_m$, the invariant
$$\delta Z (m):=[Z]^{k,k}_{\mid Y_m}\in H^k(Y_m,\Omega_{Y\mid Y_m}^k).$$  In \cite[Section 5.2.1]{voisinbook},  the filtration $L$ on
$H^k(Y_m,\Omega_{Y\mid Y_m}^k)$ induced by the  filtration
$$L^p\Omega_Y^k:=\pi^*\Omega_M^p\wedge \Omega_Y^{k-p}$$
on $\Omega_Y^k$ and its restriction to  $\Omega_{Y\mid Y_m}^k$
 is introduced. Note that $Gr^p_L(\Omega_{Y}^k)\cong \pi^*\Omega_{M}\otimes\Omega_{Y/M}^{k-p}$.  This filtration allows to introduce successive infinitesimal invariants living in  the graded pieces of $Gr^p_LH^k(Y_m,\Omega_{Y\mid Y_m}^k)$. The following result is proved in \cite[Proposition 5.9]{voisinbook} concerning the corresponding spectral sequences
 \begin{eqnarray}\label{eqspectseq} E_r^{p,q}\Rightarrow H^{p+q}(Y_m,\Omega_{Y\mid Y_m}^k),\\ \nonumber
 E_r^{p,q}\Rightarrow R^{p+q}\pi_*(\Omega_{Y}^k)  .
 \end{eqnarray}
 \begin{prop}\label{protrivbook}  The $E_1$-terms of (\ref{eqspectseq})
 $$\ldots E_1^{p-1, l-p}\stackrel{d_1}\rightarrow E_1^{p,l-p}\stackrel{d_1}\rightarrow E_1^{p+1,l-p}\ldots$$  identify respectively with  the
 Griffiths complexes
$$\ldots H^{k-p+1,l-1}(Y_m)\otimes \Omega_{M,m}^{p-1} \stackrel{\overline{\nabla}_{p-1}}{\rightarrow }  H^{k-p,l}(Y_m)\otimes \Omega_{M,m}^p\stackrel{\overline{\nabla}_p}{\rightarrow } H^{k-p-1,l+1}(Y_m)\otimes \Omega_{M,m}^{p+1}\ldots,$$
$$ \ldots \mathcal{H}^{k-p+1,l-1}\otimes \Omega_{M}^{p-1} \stackrel{\overline{\nabla}_{p-1}}{\rightarrow }  \mathcal{H}^{k-p,l}\otimes \Omega_{M}^p\stackrel{\overline{\nabla}_p}{\rightarrow } \mathcal{H}^{k-p-1,l+1}\otimes \Omega_{M}^{p+1}\ldots,  $$
where in the second formula $\mathcal{H}^{p,q}:=R^q\pi_*\Omega_{Y/M}^p$ and  we set $\overline{\nabla}_p(\alpha\otimes \omega)=\overline{\nabla}(\alpha)\wedge \omega$ for any
$\alpha\in H^{p,q}(Y_m)$ and $\omega\in \Omega_{M,m}$.
\end{prop}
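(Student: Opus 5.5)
We prove Proposition \ref{protrivbook} by filtering the Koszul-type resolution of $\Omega_{Y\mid Y_m}^k$ and checking that the differential $d_1$ is the Griffiths map. The same argument applies in the relative setting over $M$ and in the pointwise setting on $Y_m$, so we treat both together.

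\emph{The $E_1$-terms.} By construction, the spectral sequence of the filtered sheaf $(\Omega^k_{Y\mid Y_m},L)$ has
$$E_1^{p,q}=H^{p+q}(Y_m,Gr^p_L\Omega^k_{Y\mid Y_m}).$$
Since $\pi$ is smooth, the sequence (\ref{eqexactkodspen}) is an exact sequence of vector bundles. Hence
$$Gr^p_L\Omega_Y^k\cong \pi^*\Omega^p_M\otimes\Omega^{k-p}_{Y/M}.$$
Restricted to $Y_m$, the factor $\pi^*\Omega^p_M$ is trivial with fiber $\Omega^p_{M,m}$. This gives
$$E_1^{p,q}=H^{p+q}(Y_m,\Omega_{Y_m}^{k-p})\otimes\Omega^p_{M,m}=H^{k-p,p+q}(Y_m)\otimes\Omega^p_{M,m}.$$
Setting $l=p+q$, the term $E_1^{p,l-p}$ is $H^{k-p,l}(Y_m)\otimes\Omega^p_{M,m}$, as stated. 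In the relative version, $E_1^{p,q}=R^{p+q}\pi_*(\pi^*\Omega^p_M\otimes\Omega^{k-p}_{Y/M})$. The projection formula then gives $\mathcal{H}^{k-p,l}\otimes\Omega^p_M$, where we use that the $\mathcal{H}^{p,q}$ are locally free by Hodge theory and base change.

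\emph{Identifying $d_1$.} The differential $d_1:E_1^{p,q}\to E_1^{p+1,q}$ is the connecting map of the short exact sequence
$$0\to Gr^{p+1}_L\to L^p/L^{p+2}\to Gr^p_L\to 0.$$
We show this sequence is obtained from the $k-p$-th exterior power of (\ref{eqexactkodspen}), tensored with $\pi^*\Omega^p_M$. Concretely, first take
$$0\to \pi^*\Omega_M\otimes\Omega^{k-p-1}_{Y/M}\to \Omega^{k-p}_Y/L^2\to\Omega^{k-p}_{Y/M}\to0,$$
then tensor with $\pi^*\Omega^p_M$, and finally compose with the wedge product $\Omega^p_M\otimes\Omega_M\to\Omega^{p+1}_M$. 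The key check is that the wedge map $\pi^*\Omega^p_M\otimes\Omega^{k-p}_Y\to\Omega^k_Y$ carries this sequence onto the graded sequence above, compatibly with $L$. This is immediate on local frames: if $z$ are coordinates on $M$ and $(z,w)$ coordinates on $Y$, then both sides are spanned by $dz_I\wedge dw_J$.

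\emph{The connecting map.} The connecting map of the exterior-power sequence is cup product with the Kodaira--Spencer class
$$\kappa\in H^1(Y_m,T_{Y_m})\otimes\Omega_{M,m},$$
followed by contraction $\Omega^{k-p}\to\Omega^{k-p-1}$. Indeed, the extension class of the exterior-power sequence is the image of the extension class of (\ref{eqexactkodspen}), which is $\kappa$, under the derivation action of $T_{Y_m}$ on forms. Griffiths' theorem, quoted above from \cite[Section 5.1.2]{voisinbook}, says that cup-contraction with $\kappa$ is exactly $\overline{\nabla}$. Combining this with the wedge by $\Omega^p_M$, we get
$$d_1(\alpha\otimes\omega)=\pm\,\overline{\nabla}\alpha\wedge\omega=\pm\,\overline{\nabla}_p(\alpha\otimes\omega).$$
The relative case is the same computation done sheafwise on $M$.

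\emph{Main obstacle.} The delicate step is the identification of the extension class of the exterior-power sequence with contraction by $\kappa$, including the sign conventions, so that $d_1$ is exactly $\overline{\nabla}_p$ and not merely up to a nonzero factor. We would handle this by a Čech computation. On an open cover of $Y_m$, choose local holomorphic splittings of (\ref{eqexactkodspen}), that is, lifts of $\partial/\partial z_i$ to $Y$. Their differences on overlaps form the cocycle representing $\kappa$, and they act on the lifted forms by contraction. Any residual sign is harmless for the uses in the paper, since only kernels and images of these maps matter.
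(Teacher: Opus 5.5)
Your proposal is correct and is essentially the standard argument behind the paper's citation of \cite[Proposition 5.9]{voisinbook}; the paper gives no proof of its own. The steps match: you compute $Gr^p_L\Omega^k_Y\cong\pi^*\Omega^p_M\otimes\Omega^{k-p}_{Y/M}$, read $d_1$ as the connecting map of $0\to Gr^{p+1}_L\to L^p/L^{p+2}\to Gr^p_L\to 0$, compare it through the wedge map with $\pi^*\Omega^p_M$ tensored with the exterior-power sequence, and conclude by Griffiths' description of $\overline{\nabla}$ as cup product with the Kodaira--Spencer class followed by contraction. The sign you flag depends only on $p$, so it can be absorbed into the identifications of the $E_1$-terms and causes no gap.
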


 It the paper \cite{voisinsurfaces} (see also \cite{voisinICM94}) where infinitesimal invariants are introduced using  this spectral sequence,   it degenerates at $E_2$ for degree reasons, because the infinitesimal variation of Hodge structures considered are that of hypersurfaces, so are supported in only one degree.  We could not find a   reference for its  degeneracy at $E_2$ in general, so we include here a proof for completeness.

 \begin{prop} \label{propspecseq} Let  $\pi: Y\rightarrow M$ be  a  smooth projective morphism, where $M$ is smooth and  quasi-projective. Then the spectral sequences (\ref{eqspectseq}) degenerate at $E_2$.
 \end{prop}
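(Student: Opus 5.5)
The plan is to realize both spectral sequences as spectral sequences of a filtered complex whose $E_1$-differential is $\overline{\nabla}$, and to show that this filtered complex is, up to filtered quasi-isomorphism, a direct sum of shifted copies of the Griffiths complexes of Proposition \ref{protrivbook}; degeneration at $E_2$ then follows. I would first treat the relative version over $M$ and deduce the pointwise version at $m$ from it.

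\textbf{Step 1 (Katz--Oda).} Write $\Omega_Y^k={\rm Gr}^k_F\Omega_Y^\bullet$, where $F$ is the stupid filtration. The filtration $L^p\Omega_Y^\bullet=\pi^*\Omega^p_M\wedge\Omega_Y^{\bullet-p}$ is compatible with $F$, and ${\rm Gr}^p_L\Omega_Y^\bullet\cong\pi^*\Omega^p_M\otimes\Omega^{\bullet-p}_{Y/M}$. The Katz--Oda description of the Gauss--Manin connection identifies $R\pi_*(\Omega_Y^\bullet,L,F)$ with the filtered de Rham complex of the variation of Hodge structure $\mathcal{H}^\bullet=R^\bullet\pi_*\Omega^\bullet_{Y/M}$:
$$\mathcal{H}^\bullet\stackrel{\nabla}{\rightarrow}\mathcal{H}^\bullet\otimes\Omega_M\stackrel{\nabla}{\rightarrow}\mathcal{H}^\bullet\otimes\Omega^2_M\rightarrow\cdots,$$
with $L$ the filtration by the degree in $\Omega_M$ and $F^k$ given by $F^{k-p}\mathcal{H}\otimes\Omega^p_M$ (Griffiths transversality). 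This identification is legitimate once one knows that $R\pi_*\Omega^\bullet_{Y/M}$, together with its Hodge filtration, splits in the filtered derived category as $\bigoplus_l(R^l\pi_*\Omega^\bullet_{Y/M},F)[-l]$. In particular each $R^q\pi_*\Omega^j_{Y/M}$ is locally free and commutes with base change, by relative Hodge degeneration.

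\textbf{Step 2 (taking ${\rm Gr}_F^k$).} Applying ${\rm Gr}^k_F$ then expresses $R\pi_*\Omega_Y^k$, filtered by $L$, as a direct sum over $l$ of the complexes
$$\mathcal{H}^{k,l-k}\rightarrow\mathcal{H}^{k-1,l-k+1}\otimes\Omega_M\rightarrow\cdots,$$
placed in suitable degrees, with differentials $\overline{\nabla}_p$ and with $L$ the stupid filtration on each summand. For a complex filtered by its stupid filtration, the spectral sequence has $E_1$ equal to the terms, $d_1$ equal to the differential, and $E_2=E_\infty$. Hence the spectral sequence for $R^{p+q}\pi_*\Omega^k_Y$ degenerates at $E_2$, and its $E_1$-page recovers Proposition \ref{protrivbook}. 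For the fiber $Y_m$, restrict everything to $m$. Because all the sheaves involved are locally free and commute with base change, $H^*(Y_m,\Omega^k_{Y\mid Y_m})$ with its filtration $L$ is computed by the fiber at $m$ of the same split filtered complex, namely the complexes with terms $H^{k-p,l}(Y_m)\otimes\Omega^p_{M,m}$. This gives the pointwise degeneration.

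\textbf{Main obstacle.} The key step is the filtered decomposition used in Step 1: the statement that the relative de Rham complex decomposes compatibly with $F$, rather than only up to the non-filtered Deligne decomposition. I would prove it with Deligne's hard Lefschetz argument. Using a relatively ample class $\eta$ on $Y$, which is a section of $F^1$, one obtains filtered morphisms $\eta^{i}:R\pi_*\Omega^\bullet_{Y/M}\to R\pi_*\Omega^\bullet_{Y/M}[2i](i)$. Deligne's construction then yields a decomposition built from $\eta$ and projections to primitive parts. The point to verify is that these projections are combinations of powers of $\eta$ and are therefore strictly compatible with $F$ and with $\nabla$. This is where the algebra must be checked carefully, and it is also where the projectivity hypothesis on $\pi$ is used.

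If this filtered splitting is hard to establish directly, the fallback is a pointwise argument. Using a $C^\infty$ splitting of $0\to\Omega_{M,m}\otimes\mathcal{O}\to\Omega_{Y\mid Y_m}\to\Omega_{Y_m}\to0$, the Dolbeault complex of $\Omega^k_{Y\mid Y_m}$ becomes $\bigoplus_p A^{k-p,\bullet}(Y_m)\otimes\Omega^p_{M,m}$ with differential $\bar\partial+\iota_\phi$, where $\phi$ represents the Kodaira--Spencer class. Choosing $\phi$ harmonic, one can then try to kill the higher differentials $d_r$, $r\geq2$, by a $\partial\bar\partial$-lemma argument applied to the zigzags that compute them.
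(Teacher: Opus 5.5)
Your Step 2 is sound. Suppose $R\pi_*(\Omega^k_Y,L)$ were isomorphic, in the $L$-filtered derived category of $\mathcal{O}_M$-modules, to a direct sum of shifted Griffiths complexes with their stupid filtrations. Then both spectral sequences would degenerate at $E_2$, the pointwise one by base change.

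The gap is Step 1, which carries all the content, and there the criterion you state is insufficient. You ask for a splitting of $R\pi_*\Omega^\bullet_{Y/M}$, with its Hodge filtration, in the filtered derived category. But this is a complex of $\mathcal{O}_M$-modules, and locally it is just $\bigoplus_l(\mathcal{H}^l,F)[-l]$. It does not see the Gauss--Manin connection, not even the Kodaira--Spencer map, i.e.\ $d_1=\overline{\nabla}$. So no splitting of it can control the higher $d_r$, which are extension data inside the $L$-filtered complex $R\pi_*(\Omega^\bullet_Y,L,F)$. Asking that the splitting be ``compatible with $\nabla$'' has no meaning for morphisms in $D(\mathcal{O}_M)$.

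What you actually need is:
\begin{itemize}
\item[(a)] a decomposition $\int_\pi(\mathcal{O}_Y,F)\simeq\bigoplus_l(\mathcal{H}^l,F)[-l]$ in the derived category of filtered $\mathcal{D}_M$-modules;
\item[(b)] a bifiltered comparison $R\pi_*(\Omega^\bullet_Y,L,F)\simeq(DR_M\int_\pi(\mathcal{O}_Y,F),\sigma,F)$, identifying the Katz--Oda filtration with the stupid filtration.
\end{itemize}
Neither is supplied.

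Your sketch for (a) also fails as stated. The primitive projectors on cohomology involve the inverses of the hard Lefschetz isomorphisms (equivalently $\Lambda$). They are not polynomials in $\eta$, and have no meaning at the level of complexes. Deligne's decomposition argument works differently: it uses truncation triangles for a t-structure whose heart contains the would-be summands, together with hard Lefschetz isomorphisms in that heart. To run it here you would have to:
\begin{itemize}
\item work in the derived category of graded modules over the Rees ring $R_F\mathcal{D}_M$;
\item prove that the cohomology objects of the direct image are the strict modules $R_F\mathcal{H}^l$;
\item prove that $\eta^i$ induces isomorphisms of filtered $\mathcal{D}_M$-modules.
\end{itemize}
This is M.~Saito's filtered decomposition theorem for projective morphisms. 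Citing it, together with (b), would complete your route, but it is a major theorem, not a routine verification. Your $\partial\bar\partial$ fallback is only an outline.

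The paper needs none of this. It uses a weight argument, the analogue of the $\mu_n^*$ argument for abelian fibrations:
\begin{itemize}
\item By Deligne's global invariant cycle theorem, the relative K\"unneth components $\delta_i$ of the diagonal of $Y\times_MY$ are restrictions of Hodge classes on a compactification, so they are represented by closed $(d,d)$-forms.
\item These forms act on the $L$-filtered relative Dolbeault complex of $\Omega^k_Y$, hence on both spectral sequences, as the identity on $E_1^{p,l-p}$ when $k-p+l=i$ and as $0$ otherwise.
\item Since $d_r$ changes $k-p+l$ by $1-r$, it must vanish for $r\geq 2$.
\end{itemize}
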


\begin{rema}{\rm  In this paper, we are going to study infinitesimal invariants for cycles on families $\pi: J\rightarrow M$ of abelian varieties. In this case, proving the degeneracy  at $E_1$ of the  spectral sequences (\ref{eqspectseq}) is even easier. Indeed, for any integer  $n>1$, let $\mu_n:J\rightarrow J$ be the multiplication by $m$ map. It is a morphism of fibrations over $M$, and  the  pull-back morphisms
$$ \mu_n^*: \mu_n^*\Omega^k_J\rightarrow \Omega_J^k$$
are compatible with the filtration $L$,  which provide filtered
isomorphisms
$$\mu_n^*: H^k(J_m,\Omega_{J\mid J_m}^k)\rightarrow H^k(J_m,\Omega_{J\mid J_m}^k)$$
$$ \mu_n^*: R^k\pi_*\Omega_{J}^k\rightarrow R^k\pi_*\Omega_{J}^k.$$
The morphisms $\mu_n^*$ induce morphisms of spectral sequences (\ref{eqspectseq}) for $J$, and by Proposition \ref{protrivbook}, these morphisms act as  $n^{k-p+l}Id$ on $E_1^{p,l-p}$. It thus follows that they also act  as  $n^{k-p+l}Id$ on $E_r^{p,l-p}$, for any $r\geq 1$. But then, the differentials
$$d_r:E_r^{p,l-p}\rightarrow E_r^{p+r,l-p-r+1}$$
vanish for $r\geq 2$ since they are compatible with the $\mu_n$-action and $\mu_n^*$ acts by different powers of $n$ on both sides for $r\geq 2$. }
\end{rema}
\begin{proof}[Proof of Proposition \ref{propspecseq}] The reasoning is similar to the one presented above, except that we need a substitute for the $\mu_n^*$. This is done using the relative K\"{u}nneth projectors constructed as follows.
Let $d$ be the relative dimension of $\pi$. Consider $Y_2:=Y\times_MY\stackrel{\pi'}{\rightarrow} M$, with projections $\pi_1,\,\pi_2: Y_2\rightarrow Y$, which are both smooth projective morphisms of relative dimension $d$. The diagonal $\Delta_Y\subset Y_2$ has a cohomology class
$$\delta_Y\in H^{2d}(Y_2,\mathbb{Q})$$
which is algebraic hence extends to  a Hodge class on a given projective compactification $\overline{Y}_2$ of $Y_2$. It induces a relative action
$$\delta_{Y,*}:R^j \pi_{1*}\mathbb{Q}\rightarrow R^j \pi_{2*}\mathbb{Q},$$
$$\delta_{Y,*}(\alpha)=\pi_{2*}(\pi_1^*\alpha\cup \delta_Y)$$
which is nothing but the identity.
The class $\delta_Y$ induces a global section of
\begin{eqnarray}\label{eqtropchaud} R^{2d}\pi'_{*}\mathbb{Q}=\oplus_{j+i=2d}R^j\pi{*}\mathbb{Q}\otimes R^i\pi{*}\mathbb{Q}=\oplus_i{Hom}(R^i\pi{*}\mathbb{Q}, R^i\pi{*}\mathbb{Q}).\end{eqnarray}
Using (\ref{eqtropchaud}), we can define  the global section $\delta_{Y,i}$ of the local system $R^{2d}\pi'_{*}\mathbb{Q}$  as the one which acts as   the identity on  $R^i\pi{*}\mathbb{Q}$ and by $0$ on $R^j\pi{*}\mathbb{Q}$ for $j\not=i$. The classes  $\delta_{Y,i}(m)$  are Hodge classes at any point $m\in M$ which are  not known to be algebraic but by Deligne's
global invariant cycles theorem  \cite{deligne}, see also \cite[Theorem 4.24]{voisinbook}, there exists for any $i$  a Hodge class on
$\overline{Y}_2$, such that its restriction
$\delta_i$ to $Y_2$ induces the section $\delta_{Y,i}\in H^0(M, R^{2d}\pi_{2*}\mathbb{Q})$. The class
$\delta_i\in H^{2d}(Y_2,\mathbb{Q})$ (which is what we call a  relative K\"{u}nneth projector) is thus constructed so that
\begin{eqnarray}\label{eqpouractiondedelatai} \delta_{i*}(\alpha):=\pi_{2*}(\pi_1^*\alpha\cup \delta_i ) :R^j\pi_*\mathbb{Q}\rightarrow R^j\pi_*\mathbb{Q}
\end{eqnarray}
is the identity for $j=i$ and $0$ otherwise.
As $\delta_i$ is the restriction of a Hodge class on $\overline{Y}_2$,  there is a Dolbeault version
$$\delta_i^{d,d}\in H^d(Y_2,\Omega_{Y_2}^d)$$ of $\delta_i$. Let us choose a Dolbeault representative
$\tilde{\delta}_i^{d,d}$ of $\delta_i^{d,d}$, given by   a closed $(d,d)$-form on $Y_2$.

\begin{lemm}  \label{leactionkunnethcompdolb} (i) For  each $k$, the $\overline{\partial}$-closed forms $\tilde{\delta}_i^{d,d}$ act on the  push-forward to $M$ of the  Dolbeault complex $(\mathcal{A}^{0,*}(\Omega_Y^k),\overline{\partial}_{Y})$ of $\Omega_Y^k$,  whose cohomology computes
$R^*\pi_*\Omega_Y^k$. This  action is compatible with the filtration $L$ on $\Omega_Y^k$.

(ii) The induced action of $\tilde{\delta}_i^{d,d}$ on $E_1^{p,l-p}=\mathcal{H}^{k-p,l}\otimes \Omega_{M}^p$ is by the identity if $k-p+l=i$ and by $0$ otherwise.

(iii) Similar statement at any point $m\in M$.
\end{lemm}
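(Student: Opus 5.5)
The plan is to define the action concretely by correspondences at the level of forms, and then compute the induced map on $E_1$ by comparing it with the topological action (\ref{eqpouractiondedelatai}).

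\textbf{Definition of the action (i).} For a $(0,q)$-form $\beta$ on $Y$ with values in $\Omega_Y^k$, i.e.\ a $(k,q)$-form, set
$$\tilde\delta_{i*}(\beta):=\pi_{2*}\big(\pi_1^*\beta\wedge\tilde{\delta}_i^{d,d}\big),$$
where $\pi_{2*}$ is integration along the fibers of the smooth proper map $\pi_2$, which has relative dimension $d$. Integration along fibers lowers bidegree by $(d,d)$, so the result is again a $(k,q)$-form on $Y$. Because $\tilde\delta_i^{d,d}$ is $\overline\partial$-closed and fiber integration commutes with $\overline\partial$, this operation commutes with $\overline\partial_Y$. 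It is also $\pi^{-1}\mathcal{O}_M$-linear, more precisely $\pi^*\mathcal{A}_M$-linear, since $\pi\circ\pi_1=\pi\circ\pi_2$. Hence it acts on $\pi_*\mathcal{A}^{0,*}(\Omega^k_Y)$.

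The next point is compatibility with $L$. If $\beta\in\pi^*\Omega_M^p\wedge\mathcal{A}^{k-p,q}$, then $\pi_1^*\beta$ contains the factor $\pi'^*\Omega_M^p$. This factor pulls out of $\pi_{2*}$ by the projection formula, because it equals $\pi_2^*\pi^*\Omega_M^p$. So the action preserves $L^p$. I must check that fiber integration of forms that are smooth but not closed is still well defined on the Dolbeault complex; it is, since $\pi_2$ is a proper submersion. The same construction on a single fiber $Y_m$ gives (iii).

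\textbf{Action on $E_1$ (ii).} On $Gr_L^p$, the action is computed fiberwise. For $m\in M$, the graded piece is $\Omega^p_{M,m}\otimes$ the Dolbeault complex of $\Omega^{k-p}_{Y_m}$. The induced map is $\mathrm{Id}\otimes$ the action of $\tilde\delta_i^{d,d}$ restricted to $Y_m\times Y_m$, projected to its fiber-type component of type $(d,d)$ on $Y_m\times Y_m$. That component is the Dolbeault class of $\delta_i(m)$. Therefore on $H^l(Y_m,\Omega^{k-p}_{Y_m})=H^{k-p,l}(Y_m)$ the map is the Hodge component of the topological correspondence $\delta_i(m)_*$. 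By the construction of $\delta_i$ through (\ref{eqpouractiondedelatai}), that correspondence is the identity on $H^{k-p+l}$ when $k-p+l=i$ and $0$ otherwise. A correspondence given by a Hodge class respects the Hodge decomposition, and its Dolbeault action agrees with the $(k-p,l)$-component of the Betti action. This gives (ii), using the identification of $E_1$ in Proposition \ref{protrivbook}.

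\textbf{Main obstacle.} The delicate step is justifying that only the $(d,d)$ relative component of $\tilde\delta_i^{d,d}|_{Y_m\times Y_m}$ contributes on $Gr_L^p$. Write $\tilde\delta_i^{d,d}|_{\pi'^{-1}(m)}$ as a form on $Y_2$ along $Y_m\times Y_m$, and decompose it using $L$ on $\Omega_{Y_2}$. Terms involving $\pi'^*\Omega_M^{\ge1}$ raise the $L$-level. They therefore vanish on $Gr_L$, or land in $L^{p+1}$. What remains is the restriction to the fiber, a closed form representing $\delta_i(m)$ in Dolbeault cohomology of $Y_m\times Y_m$. Its class is of type $(d,d)$ because it is the restriction of a Hodge class.
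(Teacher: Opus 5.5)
Your proposal is correct and follows essentially the same route as the paper. The action is $\tilde{\delta}_{i*}(\alpha)=\pi_{2*}(\pi_1^*\alpha\wedge\tilde{\delta}_i)$, which commutes with $\overline{\partial}$ and preserves $L$ by the projection formula (since $\pi\circ\pi_1=\pi\circ\pi_2$); on $E_1$, because $\tilde{\delta}_i$ is a closed form, it is identified with the Hodge component of the topological action $\delta_{i*}$. Your fiberwise computation on $Y_m\times Y_m$ just spells out the compatibility the paper states in one line between the action on $\mathcal{H}^{k-p,l}$ and the action on $R^{k-p+l}\pi_*\mathbb{C}$; also, the restriction of the $(d,d)$-form $\tilde{\delta}_i$ to the complex submanifold $Y_m\times Y_m$ is automatically of type $(d,d)$, so the projection in your ``main obstacle'' step is unnecessary.
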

\begin{proof} The action of $\tilde{\delta}_i^{d,d}$ on the relative Dolbeault complex of $\Omega_Y$  is given by
\begin{eqnarray}\label{eqformula20juin} \tilde{\delta}_{i*}(\alpha)=\pi_{2*}(\pi_1^*\alpha\cup \tilde{\delta}_{i}),\end{eqnarray}
where $\pi_{2*}: A^{p,q}(Y_2/M)\rightarrow A^{p-d,q-d}(Y/M)$ is given by integration along the fibers. As the three operations $\pi_1^*, \,\cup \tilde{\delta}_{i}$ and $\pi_{2*}$ commute with the  Dolbeault operator $\overline{\partial}$ and preserve the $L$-filtration, so does $\tilde{\delta}_{i*}$.
The $\tilde{\delta}_{i*}$ thus provide morphisms of filtered complexes quasi-isomorphic to $\Omega_Y^k$, hence morphisms of the corresponding spectral sequences.  This proves (i). Concerning (ii),  the action of
$\tilde{\delta}_{i*}$ on $E_1^{p,l-p}=\mathcal{H}^{k-p,l}\otimes \Omega_{M}^p$ is compatible with its  action of $\mathcal{H}^{k-p,l}$ (and is the identity on $\Omega_{M}^p$), and the latter is also compatible with the action of $\tilde{\delta}_{i*}$ on $R^{k-p+l}\pi_*\mathbb{C}$ given by the same formula as (\ref{eqformula20juin}), which makes sense since we chose $\tilde{\delta}_{i*}$ to be a closed form. This latter action is the action of $\delta_{i*}$ given by (\ref{eqpouractiondedelatai}), hence it vanishes if
$k-p+l\not=i$, and it is the identity if $k-p+l=i$, which proves (ii).
\end{proof}
By Lemma \ref{leactionkunnethcompdolb}(i), the form $\sum_i\tilde{\delta}_{i}$ acts on the $L$-filtered  relative Dolbeault complex of $\Omega_Y^k$, hence on the whole spectral sequences (\ref{eqspectseq}). Furthermore by Lemma \ref{leactionkunnethcompdolb}(ii), it acts by the identity on the $E_1$-term, hence on all $E_r$-terms. In order to prove that $d_r=0$ for $r\geq 2$, it thus suffices to prove that
$d_r\circ \tilde{\delta}_{i*}=0$ for all $r\geq 2$. This is now obvious for degree reason by Lemma \ref{leactionkunnethcompdolb}(ii).
\end{proof}
 A generalization of Mumford's theorem (see \cite{voisinsurfaces}, \cite{voisinchow}) is as follows.
\begin{theo}\label{theomumfordvar}  Let $\pi:Y\rightarrow M$ be a  smooth  projective morphism, and let  $Z\in {\rm CH}^k(Y)$ satisfy the condition  that for any fiber $m\in M$, $Z_{\mid Y_m}=0$ in ${\rm CH}^k(Y_m)$. Then, for a general point  $m\in M$,
$\delta Z(m)=0$ in  $H^k(Y_m,\Omega_{Y\mid Y_m}^k)$.
\end{theo}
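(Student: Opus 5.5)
Here is the plan I would follow for Theorem \ref{theomumfordvar}, which is the classical Bloch--Srinivas/Mumford argument carried out in families.

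\emph{Step 1: spreading the vanishing.} Since $Z_{\mid Y_m}=0$ in ${\rm CH}^k(Y_m)$ for every $m\in M$, a Hilbert scheme argument applies. The rational equivalences on the fibers are parametrized by countably many families of data: curves, cycles on curves times $Y$, and so on. Because $M$ is not a countable union of proper closed subsets, one of these families dominates $M$. After a generically finite base change $M'\to M$, we may therefore assume the following. There is a dense Zariski open set $U\subset M'$ and a cycle $W\in {\rm CH}^{k}(\mathbb{P}^1\times Y_U)$ with $W_{\mid\{0\}\times Y_U}-W_{\mid\{\infty\}\times Y_U}=Z_{\mid Y_U}$, the equality holding as cycles, pulled back to $U$. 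It follows that $Z_{\mid Y_U}=0$ in ${\rm CH}^k(Y_U)$.

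I would not use the more refined version, which says that $Z$ restricted to $Y_U$ is supported over a proper subset. All that is needed is this: after finite base change and restriction to a Zariski open set, $Z$ is rationally equivalent to $0$ on the total space $Y_U$. Doing the base change properly is the step where care is needed.

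\emph{Step 2: passing to Dolbeault classes.} From Step 2 on the argument is formal. Rational equivalence implies equality of Dolbeault classes, so $[Z_{\mid Y_U}]^{k,k}=0$ in $H^k(Y_U,\Omega^k_{Y_U})$. Restricting this class to a fiber $Y_m$ with $m\in U$ gives $\delta Z(m)=0$ in $H^k(Y_m,\Omega^k_{Y_{U}\mid Y_m})$.

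\emph{Step 3: returning from the base change.} We need $\delta Z(m)$ for $Y\to M$ itself, so we must compare with the original invariant. Let $f:M'\to M$ be the base change and $m'\in U$ a point over $m$. Choose $m$ general, so that $f$ is étale at $m'$. Then $f^*\Omega_M\cong\Omega_{M'}$ near $m'$, and hence $\Omega_{Y'\mid Y'_{m'}}\cong\Omega_{Y\mid Y_m}$ under the identification $Y'_{m'}=Y_m$. The map $Y'\to Y$ is étale near $Y'_{m'}$ and pulls back the Dolbeault class $[Z]^{k,k}$ to $[Z']^{k,k}$. So $\delta Z(m)$ corresponds to $\delta Z'(m')$, which is $0$ by Step 2.

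\emph{Main obstacle.} The genuine difficulty is Step 1. The countability argument has to produce a rational equivalence defined over a generically finite cover of a dense open set of $M$, not merely a fiberwise one. This is the standard argument that relative Chow varieties parametrizing rational equivalences have countably many components. I would cite it in that form as in \cite{voisinchow}, rather than reproving it.
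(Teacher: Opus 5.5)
Your proposal is correct and is essentially the paper's argument: a countability argument as in \cite{blochsri}, \cite{voisinchow} spreads the fiberwise rational equivalences over a dense Zariski open set, you then take Dolbeault classes on $Y_U$ and restrict them to $Y_m$. The only minor difference is how you undo the generically finite base change $f:M'\rightarrow M$. The paper pushes the relation forward to get $NZ_{\mid Y_U}=0$ in ${\rm CH}^k(Y_U)$ for some dense open $U\subset M$ (harmless since Dolbeault classes have $\mathbb{C}$-coefficients), whereas you identify $\delta Z(m)$ with $\delta Z'(m')$ at points where $f$ is \'etale, which works equally well.
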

Indeed, under these assumptions, a multiple $NZ$ is rationally equivalent to $0$ on $Y_{U}$, for some dense Zariski open set $U\subset M$ (see \cite{blochsri},  \cite{voisinchow}). A fortiori, one has $[Z]^{k,k}_{\mid Y_{U}}=0$ in $H^k(Y_{U},\Omega^k_{Y_{U}})$ and so $\delta Z(m)=0$ for any $m\in U$, by restriction to $Y_m$. Note that the vanishing of $\delta Z(m)$ may not hold everywhere, due to the fact that
$R^k\pi_*\Omega_Y^k$ is not  torsion free in general.

The proofs of Theorems \ref{theoabelian} and \ref{theoquestionpirola} rely on Theorem \ref{theomumfordvar} applied to the case of codimension $2$ cycles on an abelian fibration $J\rightarrow M$.  More precisely, the cycles $Z$ that we are going to consider are of the form $Z=D^2$, for some divisor $D$ on $J$. The compatibility of the cycle class with cup-product and intersection product gives the equality
\begin{eqnarray}\label{eqcarrecarrecycle} [Z]^{2,2}=([D]^{1,1})^2\,\,{\rm in}\,\,H^2(J,\Omega_J^2).
\end{eqnarray}
Restricting this equalities along the fibers of $\pi$, one gets that $\delta Z$ is computed  explicitly as the square of $\delta D$, although the  square map

$${\rm Sym}^2 H^1(J_m,\Omega_{ J\mid J_m})\rightarrow H^2(J_m, \Omega_{J\mid J_m}^2)$$
is not so easy to analyze. This analysis was started in \cite{verni} and our results rely on the analysis made in {\it loc. cit.}. The paper \cite{verni} also computes the infinitesimal invariant  for the Griffiths-Pirola normal function $\gamma_P$ (defined over a double cover $\widetilde{\mathcal{M}}_4$ of $\mathcal{M}_4$), and for its square. Let $\widetilde{\mathcal{C}}\rightarrow \widetilde{\mathcal{M}}_4$ be the pull-back of the universal curve of genus $4$ as in the introduction, and let $\widetilde{\mathcal{C}}^{(2)/\widetilde{\mathcal{M}}_4}\rightarrow \widetilde{\mathcal{M}}_4$ be its relative second symmetric product. The normal function $\gamma_P$  provides as well a divisor $D_{\gamma_P}$ on $\widetilde{\mathcal{C}}^{(2)}$, which is cohomologous to $0$ along fibers.

 The following results proved in \cite{verni} will be  used in Section \ref{secpirola}.  Theorem  \ref{theovernicubic} strengthens an earlier result by Griffiths \cite{griffiths}, and  fully computes the infinitesimal invariant  $\delta\gamma_{P,[C]}$ for  a general curve $C$ of genus $4$. Let $C$ be  a general curve of genus $4$. Let $$V:=H^{1,0}(C), \,\,V_2:=H^0(C,K_C^{\otimes2})={\rm Sym}^2V/q,$$ where $q\in {\rm Sym}^2V$ is   nondegenerate and defines the unique quadric  $Q$ containing $C$ in its canonical embedding. The space of infinitesimal invariants  at $[C]$ for normal functions $\nu\in \Gamma(\mathcal{J}^\vee)$ defined on a neighborhood of $[C]\in\mathcal{M}_4$
is
$$I_{1,V}={\rm Hom}(V,V_2)/V$$
where the inclusion  $V\hookrightarrow   {\rm Hom}(V,V_2)$  maps $v\in V$ to the multiplication map  by $v$. The space $I_{1,V}$ is a representation of the group ${\rm Aut}(V,q)$, and, as such,  splits naturally as the direct sum of $H^0(Q,\mathcal{O}_Q(3))$ and
$H^0(Q,T_Q(1))$.

\begin{theo}\label{theovernicubic} \cite[Theorem 1.2]{verni} The infinitesimal invariant $\delta\gamma_P$ at $[C]$ (which is well-defined up to sign) belongs to  $H^0(Q,\mathcal{O}_Q(3))\subset I_{1,V}$. It is  a nonzero multiple of  the cubic equation defining $C\subset Q$.
\end{theo}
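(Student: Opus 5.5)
The plan is to compute $\delta\gamma_P$ directly, using the cohomological description of infinitesimal invariants and the representation theory of ${\rm Aut}(V,q)$.

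\textbf{Step 1: reduce to a single $g^1_3$.} Since $D_1+D_2=K_C$ and $K_C$ is defined over all of $\mathcal{M}_4$, the normal function attached to $D_1+D_2$ is constant. Its infinitesimal invariant is therefore zero, so $\delta(D_1-D_2)=2\delta(D_1)$, where $\delta(D_1)$ is the invariant of the degree $3$ normal function $D_1$ on the double cover.

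\textbf{Step 2: decompose $I_{1,V}$ under the ruling-preserving group.} The group preserving the ruling that defines $D_1$ is ${\rm SO}(V,q)\cong ({\rm SL}_2\times{\rm SL}_2)/\pm1$. Write $V=U_1\otimes U_2$ and $Q=\mathbb{P}^1\times\mathbb{P}^1$. Then
$$I_{1,V}=H^0(\mathcal{O}_Q(3,3))\oplus H^0(\mathcal{O}_Q(3,1))\oplus H^0(\mathcal{O}_Q(1,3)),$$
since $T_Q(1)=\mathcal{O}(3,1)\oplus\mathcal{O}(1,3)$. An element of ${\rm O}(q)\setminus{\rm SO}(q)$ exchanges the two rulings, hence $D_1$ and $D_2$. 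It exchanges the last two summands and preserves the first.

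\textbf{Step 3: compute $\delta(D_1)$ explicitly.} I would use the formula $\delta\nu_D(m)=[D]^{1,1}_{\mid C}$ from \cite{voisinincras}, where $D$ is a relative divisor on the universal curve. Concretely:
\begin{itemize}
\item represent a tangent vector $\xi\in H^1(C,T_C)=V_2^*$ by a Schiffer-type \v{C}ech cocycle;
\item the first-order deformation of $C$ is $C_\epsilon\subset Q_\epsilon$, given by deforming the cubic $f$ and the quadric $q$, and the ruling deforms with $Q_\epsilon$;
\item the derivative of the Abel--Jacobi map of $D_1(\epsilon)=C_\epsilon\cap(\text{line of the ruling})$ is then a sum of residues $\sum_{x\in D_1}{\rm Res}_x(\omega\cdot\xi)$, corrected by the motion of the line.
\end{itemize}
Writing this as an element of ${\rm Hom}(V,V_2)$ modulo multiplication by $V$, the goal is to show it equals $c\,f$ plus a term $h$ in $H^0(\mathcal{O}(3,1))\oplus H^0(\mathcal{O}(1,3))$. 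The term $h$ must arise from the ruling data alone and not from $f$. Equivariance should force $h$ to be expressible in terms of $q$ and the ruling only. Since there is no nonzero ${\rm SO}(q)$-invariant element of those spaces, this would give $h=0$. The coefficient of $f$ should be a universal constant $c$.

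\textbf{Step 4: show $c\neq0$.} Evaluate on one explicit deformation where the residue computation is easy, such as a deformation along a line of the ruling.

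\textbf{Main obstacle.} The hard part is the honest residue bookkeeping in Step 3, in particular controlling the contribution of the moving quadric $Q_\epsilon$ and showing it only produces terms in $V\subset{\rm Hom}(V,V_2)$. I would handle it by fixing $Q$ and using the ${\rm SO}(q)$-action to absorb deformations of $q$, which reduces everything to deforming $f$ inside $H^0(\mathcal{O}_Q(3))$.
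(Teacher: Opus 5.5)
The paper does not prove this statement. It is imported from \cite{verni}. The only extra information recorded here, in the proof of Lemma~\ref{lerank2}, is the explicit form $\delta\gamma_{P,C}=(\partial t)_{\mid Q}$ modulo $V$, together with the remark that $t\mapsto(\partial t)_{\mid Q}$ is the unique ${\rm O}(Q)$-equivariant embedding of $H^0(Q,\mathcal{O}_Q(3))$ into $I_{1,V}$. So your plan has to stand on its own.

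Its preliminary parts are sound. The decomposition in Step 2 is correct, with each summand occurring once. Reducing to deformations of $f$ inside a fixed $Q$ is also legitimate, but the reason is not the ${\rm SO}(q)$-action, which fixes $q$. The reason is that $H^1(C,T_{Q\mid C})=H^1(\mathcal{O}_C(2,0))\oplus H^1(\mathcal{O}_C(0,2))=0$, which holds because $\mathcal{O}_C(1,-1)\not\cong\mathcal{O}_C$; hence $H^0(N_{C/Q})\to H^1(C,T_C)$ is onto. With $Q$ fixed, $D_1-D_2$ is just the restriction of the fixed class $\mathcal{O}_Q(1,-1)$. So Step 1 can be dropped, and with it the invariant of the degree $3$ class $D_1$, which has no meaning in $I_{1,V}$ until one subtracts a reference class such as $\frac12K_C$.

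The genuine gap is Step 3, which is the entire content of the theorem: there you state the expected answer rather than derive it. Your equivariance argument does not apply. The term $h$ is not a fixed vector built from $q$ and the ruling; it is an ${\rm SO}(q)$-equivariant function $h(f)$ of the cubic. The absence of invariant vectors in $H^0(\mathcal{O}_Q(3,1))\oplus H^0(\mathcal{O}_Q(1,3))$ only excludes constant $h$, and nonconstant ones exist:
\begin{itemize}
\item ${\rm Sym}^3U_1\otimes U_2$ occurs in ${\rm Sym}^3({\rm Sym}^3U_1\otimes{\rm Sym}^3U_2)$, inside $S_{(2,1)}({\rm Sym}^3U_1)\otimes S_{(2,1)}({\rm Sym}^3U_2)$, so there is a nonzero cubic covariant with values in $H^0(\mathcal{O}_Q(3,1))$.
\item There is a quadratic invariant $I_2$ coming from $\bigwedge^2{\rm Sym}^3U_1\otimes\bigwedge^2{\rm Sym}^3U_2$.
\item Dividing that cubic covariant by $I_2$ gives an equivariant map, homogeneous of degree one in $f$, with values in $H^0(\mathcal{O}_Q(3,1))$.
\end{itemize}
Likewise there are such maps into $H^0(\mathcal{O}_Q(3,3))$ that are not proportional to $f$. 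So your claim that the coefficient of $f$ is a universal constant is equally unproved.

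What is missing is \emph{linearity}. Normalize $V\cong H^0(C_f,K_{C_f})$ by the Poincar\'e residue $\ell\mapsto{\rm Res}_{C_f}(\ell\,\Omega_Q/f)$, where $\Omega_Q$ generates $H^0(Q,K_Q(2))$. Then prove that $f\mapsto\delta\gamma_P(C_f)$ is the restriction of a linear map $H^0(Q,\mathcal{O}_Q(3))\to I_{1,V}$. Schur's lemma and your Step 2 then give $h=0$ and $\delta\gamma_P=c\,f$ at once.

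Proving this linearity is exactly the computation you defer. By $\delta\nu=[D]^{1,1}_{\mid C}$, the invariant is the image of $c_1(\mathcal{O}_Q(1,-1))_{\mid C}\in H^1(C,\Omega_{Q\mid C})$ in $H^1(C,\Omega_{\mathcal{C}\mid C})$. Lift it through $0\to\mathcal{O}_C(-3)\stackrel{df}{\to}\Omega_{Q\mid C}\to K_C\to0$ to some $\epsilon_f\in H^1(C,\mathcal{O}_C(-3))$. On first-order deformations $f+sg$, the invariant is then, up to sign, $g\mapsto g\,\epsilon_f\in H^1(C,\mathcal{O}_C)$. One must still identify this with $(\partial f)_{\mid Q}$ modulo $V$, through the $f$-dependent identifications of $H^1(C,\mathcal{O}_C)$ and $H^0(C,2K_C)$ with $V^*$ and $V_2$; this is Verni's computation.

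Once linearity is known, Step 4 needs no explicit evaluation. If $c=0$, then $\delta\gamma_P$ would vanish identically. Lemma~\ref{lepourlatorsion}, whose hypotheses hold over the double cover of $\mathcal{M}_4$, would then make $\gamma_P$ torsion, which it is not.
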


\begin{theo}\label{theoverni} \cite[Theorem 1.3]{verni}  One  has $(\delta\gamma_P)^2=\delta (D_{\gamma_P}^2)\not=0$ in $H^2(J_m,\Omega_{J\mid J_m}^2)$ at a general point $m\in \widetilde{\mathcal{M}}_4$, hence for a very general point $m\in \widetilde{\mathcal{M}}_4$, one has $D_{\gamma_P}^2\not=0$ in ${\rm CH}_0(C^{(2)})$.
\end{theo}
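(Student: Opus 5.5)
The proof has two parts: the formal identity $(\delta\gamma_P)^2=\delta(D_{\gamma_P}^2)$, and the nonvanishing, from which the Chow-theoretic statement follows by Theorem \ref{theomumfordvar}.

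\emph{The identity.} Apply (\ref{eqcarrecarrecycle}) on the relative symmetric product $\widetilde{\mathcal{C}}^{(2)/\widetilde{\mathcal{M}}_4}$, or equivalently on $\widetilde{J}$, which is allowed by Lemma \ref{lenouveau2804}. This gives $[D_{\gamma_P}^2]^{2,2}=([D_{\gamma_P}]^{1,1})^2$. Restricting to a fiber $J_m$ and using (\ref{eqpourintcohdedeltanu}), one gets $\delta(D_{\gamma_P}^2)(m)=(\delta\gamma_P(m))^2$ in $H^2(J_m,\Omega^2_{J\mid J_m})$. Only the sign ambiguity in $\gamma_P$ needs care here, and it disappears after squaring.

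\emph{The nonvanishing.} I would work in the graded pieces of the $L$-filtration, using Proposition \ref{protrivbook} together with the degeneration at $E_2$ from Proposition \ref{propspecseq}, or from the $\mu_n$ remark in the abelian case. Since $\delta\gamma_P$ lies in $L^1$, its square lies in $L^2H^2(J_m,\Omega^2_{J\mid J_m})$. Its image in $Gr^2_L$ is then a class in
$$E_2^{2,0}=\mathrm{Coker}\Big(H^{1,1}(J_m)\otimes\Omega_{M,m}\stackrel{\overline{\nabla}_1}{\rightarrow} H^{0,2}(J_m)\otimes\Omega^2_{M,m}\Big),$$
where the cokernel is taken inside the kernel of the next $\overline{\nabla}_2$. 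Since $\Omega_{M,m}=V_2$, $H^{0,1}=V^*$, $H^{0,2}=\Lambda^2V^*$ and $H^{1,1}=V\otimes V^*$, this square is computed as follows.
\begin{itemize}
\item Take a lift $\phi\in \mathrm{Hom}(V,V_2)=V^*\otimes V_2$ of $\delta\gamma_P$.
\item Form $\phi\wedge\phi\in\Lambda^2V^*\otimes\Lambda^2V_2$.
\item Work modulo the image of $\overline{\nabla}_1$, i.e. the subspace generated by $\mu(v)\wedge \psi$, where $\mu(v)$ is multiplication by $v$ and $\psi\in V^*\otimes V_2$.
\end{itemize}
By Theorem \ref{theovernicubic}, $\phi$ can be chosen as the map $v\mapsto \partial_v f$, where $f\in H^0(Q,\mathcal{O}_Q(3))$ is the cubic cutting out $C$. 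The statement therefore reduces to showing that $\partial f\wedge\partial f$ does not lie in $\mathrm{Im}\,\overline{\nabla}_1$.

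\emph{The main obstacle.} This last linear-algebra claim is the hard step. Both $\phi\mapsto\phi\wedge\phi$ and $\mathrm{Im}\,\overline{\nabla}_1$ are $\mathrm{Aut}(V,q)$-equivariant. So I would first decompose $\Lambda^2V^*\otimes\Lambda^2V_2$ into $SO(4)=SL_2\times SL_2/\pm$ irreducibles, writing $V=U_1\otimes U_2$ and $Q=\mathbb{P}^1\times\mathbb{P}^1$. Then I would locate a bidegree component that is not hit by $\mathrm{Im}\,\overline{\nabla}_1$, and check that the quadratic covariant $f\mapsto(\partial f\wedge\partial f)$ projects nontrivially to it for a general $(3,3)$-form $f$. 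It is enough to exhibit one explicit $f$, for instance a monomial-type or Fermat-type bicubic, for which the projection is nonzero. If the representation theory is too coarse, I would instead compute directly in coordinates, pairing against an explicit element of the dual of the cokernel.

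\emph{The Chow statement.} Suppose $D_{\gamma_P}^2=0$ in ${\rm CH}_0(C_m^{(2)})$ for a very general $m$. The locus of such $m$ is a countable union of closed algebraic subsets, so it is then all of $\widetilde{\mathcal{M}}_4$. Theorem \ref{theomumfordvar} then gives $\delta(D_{\gamma_P}^2)(m)=0$ at a general point $m$, which contradicts the nonvanishing just proved.
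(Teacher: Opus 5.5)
Your reduction is sound and agrees with how the paper treats this statement. The identity is (\ref{eqcarrecarrecycle}) restricted to fibers. By $E_2$-degeneration, $(\delta\gamma_P)^2$ may be tested in $L^2H^2\cong E_2^{2,0}=I_{2,V}$, where the square map is $\overline{M}_{2,V}$; your condition ``inside the kernel of $\overline{\nabla}_2$'' is vacuous, since $\overline{\nabla}_2$ lands in $H^{-1,3}=0$. The Chow statement follows from Theorem \ref{theomumfordvar} and the countability argument exactly as you wrote.

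The gap is the one substantive step: $\overline{M}_{2,V}(\delta\gamma_P)\neq 0$ in $I_{2,V}$, i.e. $\partial f\wedge\partial f\notin M_{2,V}(V\cdot(V^*\otimes V_2))$. You announce it as ``the hard step'' and only give a plan. You do not determine which $SO(4)$-isotypic components of the $216$-dimensional space $\Lambda^2V^*\otimes\Lambda^2V_2$ survive in the cokernel, and several of them occur with multiplicity. You do not check that the quadratic covariant of $f$ has a nonzero component there. You do not exhibit an explicit $f$ or an explicit functional on $I_{2,V}$. So the nonvanishing, which is the whole content of the theorem, is not proved.

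The missing idea is a natural supply of functionals on $\Lambda^2V^*\otimes\Lambda^2V_2$ that kill $\mathrm{Im}\,\overline{\nabla}_1$, obtained by restricting to lines of $Q$.
\begin{itemize}
\item For a line $\Delta\subset Q$, set $S=I_\Delta(1)\subset V$ and $E=V/S=H^0(\Delta,\mathcal{O}_\Delta(1))$. Because $q_{\mid\Delta}=0$, restriction induces $r:V^*\otimes V_2\to S^*\otimes\mathrm{Sym}^2E$.
\item For every $v\in V$ one has $r(\mu_v)=0$, since $vs$ vanishes on $\Delta$ for $s\in S$ (Lemma \ref{leverni}). Hence the induced map $\Lambda^2V^*\otimes\Lambda^2V_2\to\Lambda^2S^*\otimes\Lambda^2\mathrm{Sym}^2E$ kills every $\mu_v\wedge\psi$. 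It therefore factors through $I_{2,V}$ and sends $\overline{M}_{2,V}(\overline{\phi})$ to $\wedge^2 r(\phi)$; this is (\ref{eqrestcarrre}).
\item Take $\phi=(\partial f)_{\mid Q}$ and identify $S$ with $H^0(\Delta,T_\Delta(-1))$ via $q$. Then $r(\phi)$ is the map $\partial\mapsto\partial(f_{\mid\Delta})\in H^0(\Delta,\mathcal{O}_\Delta(2))$.
\item This map has rank $2$ unless the binary cubic $f_{\mid\Delta}$ is the cube of a linear form, i.e. unless $\Delta$ meets $C$ at a single point with multiplicity $3$. Only finitely many lines of a ruling do so, namely those through total ramification points of the corresponding trigonal map $C\to\mathbb{P}^1$.
\item For any other $\Delta$ one gets $\wedge^2 r(\phi)\neq 0$, hence $\overline{M}_{2,V}(\delta\gamma_P)\neq 0$.
\end{itemize}
This is exactly Lemma \ref{lerank2} combined with (\ref{eqrestcarrre}). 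The paper itself defers the nonvanishing to Verni's explicit computation, and remarks that it also follows from Theorem \ref{theoabelian}(ii).
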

The second statement in Theorem \ref{theoverni} follows from the first by Theorem \ref{theomumfordvar}.

\begin{rema}{\rm Theorem \ref{theoverni} is also implied by Theorem \ref{theoabelian}(ii). However the proof in \cite{verni} is much  more direct, because it directly proves by an explicit computation
 the nonvanishing of the infinitesimal invariant $\delta (D_{\gamma_P}^2)$, hence uses  none of the ingredients of Section \ref{secsec3ou41506}, apart of course for the general theory of infinitesimal invariants.}
 \end{rema}
 \begin{rema}{\rm  The spaces of  infinitesimal invariants for codimension $2$ cycles on $J(C)$ and on $C^{(2)}$ coincide. So the (non)vanishing properties of the infinitesimal invariants for $C^{(2)}$ and for $J(C)$ are equivalent, which is coherent with Lemma \ref{lenouveau2804}. However, it is important to also  consider in the course of the paper cycles on  $C^{(2)}$  because the Chow group of codimension $2$ cycles on $C_\eta^{(2)}$, where $C_\eta$ is the generic curve of genus $4$ is well understood, see for example \cite{voisinhodgebloc}. }
 \end{rema}
\section{Proof of Theorem \ref{theoabelian}\label{secsec3ou41506}}
\subsection{The equation $\overline{M}_2(\overline{\phi})=0$ \label{secthsquare}}
Let $V$ be a $g$-dimensional vector space, with $g\geq4$. We denote by $W_2$ the vector space ${\rm Sym}^2V$. If $g=4$, given a nondegenerate quadratic form $q$ on $V$,  we denote  by $V_2$ the  vector space ${\rm Sym}^2V/q$. If $A$ is a principally polarized  abelian variety of dimension $g$, and $V=H^{1,0}(A)$, $W_2$ is the cotangent space at $[A]$ to the moduli stack $\mathcal{A}_{g}$ (or to the base of the Kuranishi family) of polarized deformations of $A$. If $A=J(C)$, where $C$ is a non-hyperelliptic curve of genus $4$, and $q$ is the unique quadratic form vanishing on $C$ in its canonical embedding, $q$ is nondegenerate if $C$ is general, and  $V_2=H^0(C,2K_C)$ is the cotangent space at $[C]$ to the moduli stack $\mathcal{M}_4$ of  deformations of $C$.

Let \begin{eqnarray}\label{eqI1V} I_{1,W}:={\rm Hom}(V,W_2)/V,\,\,{\rm resp.}\,\,I_{1,V}:={\rm Hom}(V,V_2)/V,
\end{eqnarray}
where, in both cases, $V$ is naturally embedded in ${\rm Hom}(V,W_2)$, resp.  ${\rm Hom}(V,V_2)$, via the multiplication maps.
Let $J_g\rightarrow \mathcal{A}_g$ be the universal principally polarized abelian variety of dimension $g$  and let  $U$ be an (analytic or \'{e}tale) open set  of  $\mathcal{A}_g$. Let  $D$ be a divisor on  $J_{g,U}$, which is homologous to $0$ on fibers of $\pi$.  As we saw in Section \ref{secreview}, using the exact sequence
 \begin{eqnarray}\label{eqexconormal2704} 0\rightarrow \Omega_{U,m}\otimes \mathcal{O}_{J_{g,m}}\rightarrow \Omega_{J_g\mid J_{g,m}}\rightarrow \Omega_{ J_{g,m}}\rightarrow 0
 \end{eqnarray}
 at any point  $m\in U$, the infinitesimal invariant $\delta D(m)=\delta \nu_{D,m}$ at   $m\in U$ belongs to
$$H^1(J_{g,m},\Omega_{J_g\mid J_{g,m}})^0=(H^{0,1}({J}_{g,m})\otimes \Omega_{U,m})/\overline{\nabla}(H^{1,0}(J_{g,m})),$$
 and
 the right hand side is naturally isomorphic to $I_{1,W}$, with $V=H^{1,0}(J_{g,m})$.

 Similarly, if $\pi: J\rightarrow \mathcal{M}_4$ is the universal Jacobian fibration of the universal family of curves $\mathcal{C}\rightarrow \mathcal{M}_4$ of genus $4$, and $V_2$ is as above, the space
 $$I_{1,V}\cong H^1(\Omega_{J\mid J_{m}})^0\cong H^1(\Omega_{\mathcal{C}\mid C})^0,$$
  where $C\cong \mathcal{C}_m$, is  the space of infinitesimal invariants at $m=[C]$  for normal functions $\nu_D\in \Gamma( \mathcal{J}^\vee)$ defined on open (\'{e}tale or analytic) sets of $\mathcal{M}_4$.

For any $\phi,\,\psi\in V^*\otimes W_2$, we denote by $\phi\cdot\psi\in {\rm Sym}^2(V^*\otimes W_2)$ the image of $\phi\otimes \psi\in (V^*\otimes W_2)^{\otimes2}$. Similarly, recalling that $V$ embeds naturally in ${\rm Hom}(V,W_2)=V^*\otimes W_2$, we denote by
$V\cdot (V^*\otimes W_2)\subset {\rm Sym}^2(V^*\otimes W_2)$ the subspace generated by elements $\phi\cdot \psi$,  where $\phi\in V$. The quotient
$$ {\rm Sym}^2(V^*\otimes W_2)/ V\cdot (V^*\otimes W_2)$$
is naturally isomorphic to ${\rm Sym}^2((V^*\otimes W_2)/V)={\rm Sym}^2 I_{1,W}$.

The $2\times 2$-minors map
$$V^*\otimes W_2\ni\phi\mapsto M_{2,W}(\phi):=\wedge^2\phi\in \bigwedge^2V^*\otimes \bigwedge^2W_2$$ extends by linearity to  a morphism
$$M_{2,W}: {\rm Sym}^2(V^*\otimes W_2)\rightarrow \bigwedge^2V^*\otimes \bigwedge^2W_2.$$

Let \begin{eqnarray}\label{eqI2W}I_{2,W}:=(\bigwedge^2V^*\otimes \bigwedge^2W_2)/M_{2,W}(V\cdot (V^*\otimes W_2)).
\end{eqnarray}
 We denote
by
\begin{eqnarray}\label{eqM2bar}\overline{M}_{2,W}: {\rm Sym}^2((V^*\otimes W_2)/V)={\rm Sym}^2 I_{1,W}\rightarrow I_{2,W}\end{eqnarray}
the natural factorization.

In the second situation, where ${\rm dim}\,V=4$ and we work with $V_2=W_2/q$, we will use the similarly defined notation $M_{2,V}: {\rm Sym}^2(V^*\otimes V_2)\rightarrow \bigwedge^2V^*\otimes \bigwedge^2V_2$, and
\begin{eqnarray}\label{eqI2V2704}I_{2,V}:=(\bigwedge^2V^*\otimes \bigwedge^2V_2)/M_{2,V}(V\cdot (V^*\otimes V_2)).
\end{eqnarray}
As before, we also have the induced morphism

\begin{eqnarray}\label{eqM2barpourV}\overline{M}_{2,V}: {\rm Sym}^2((V^*\otimes V_2)/V)={\rm Sym}^2 I_{1,V}\rightarrow I_{2,V}.\end{eqnarray}

Coming back to the first situation, we have as explained in Section \ref{secreview} a filtration  $L^\bullet $ on
$H^2(\Omega_{J_{g}\mid J_{g,m}}^2)$. Given a divisor $D$ homologous to $0$ on fibers of $\pi$ on $J_{g}$, the infinitesimal invariant $$\delta (D^2)=(\delta D)^2$$ clearly  belongs to the deepest part
$$L^2H^2(\Omega_{J_{g}\mid J_{g,m}}^2)\subset H^2(\Omega_{J_{g}\mid J_{g,m}}^2)$$ for this filtration.
This deepest part  is  computed using Propositions \ref{protrivbook} and \ref{propspecseq} as
\begin{eqnarray}\label{eqpourinf22704} L^2H^2(\Omega_{J_{g}\mid J_{g,m}}^2)\cong (H^{0,2}(J_{g,m})\otimes \Omega_{U,m}^2)/\overline{\nabla}_1(H^{1,1}(J_{g,m})\otimes \Omega_{U,m}),\end{eqnarray}
where $\overline{\nabla}_1:  H^{1,1}(J_{g,m})\otimes \Omega_{U,m}\rightarrow H^{0,2}(J_{g,m})\otimes \Omega_{U,m}$ is given by
$$\overline{\nabla}_1(\omega\otimes \alpha)=\overline{\nabla}(\omega)\wedge\alpha.$$
Comparing  (\ref{eqpourinf22704}) and (\ref{eqI2W}), one gets (see \cite{verni} for more detail)  that the  space $L^2H^2(\Omega_{J_{g}\mid J_{g,m}}^2)$ is naturally  isomorphic to the space $I_{2,W}$.
It is also proved in   \cite{verni} that the linear map
$$ {\rm Sym}^2H^1(\Omega_{J_{g}\mid J_{g,m}})^0\rightarrow L^2H^2(\Omega_{J_{g}\mid J_{g,m}}^2)$$
induced by
the square map
$$ H^1(\Omega_{J_{g}\mid J_{g,m}})^0= L^1H^1(\Omega_{J_{g}\mid J_{g,m}})\ni \delta D\mapsto (\delta D)^2\in L^2H^2(\Omega_{J_{g}\mid J_{g,m}}^2)$$
is nothing but the product $\overline{M}_{2,W}$ of (\ref{eqM2bar}).
Similarly, in the second situation where we work over open sets $U$ of  $\mathcal{M}_4$, with tangent space $V_2$, and $J\rightarrow \mathcal{M}_4$ is the universal Jacobian,  the space $L^2H^2(\Omega_{J\mid J_{m}}^2)$ is naturally isomorphic to the space  $I_{2,V}$ of (\ref{eqI2V2704})  and it is proved in \cite[Proposition 5.1]{verni} that the linear map
$$ {\rm Sym}^2H^1(\Omega_{J\mid J_{m}})^0\rightarrow  L^2H^2(\Omega_{J\mid J_m}^2)
$$
induced by the square map
$$ H^1(\Omega_{J\mid J_{m}})^0= L^1H^1(\Omega_{J\mid J_{m}})\ni \delta D\mapsto (\delta D)^2\in L^2H^2(\Omega_{J\mid J_m}^2)$$
is nothing but the product $\overline{M}_{2,V}$ of (\ref{eqM2barpourV}).

\begin{prop}\label{propcarrenul}  The  notation being as above, the following hold:

(i)  Assume $g\geq 4$. If
$ \overline{\phi}\in I_{1,W}=(V^*\otimes W_2)/V$ has the property that
$$\overline{M}_{2,W}(\overline{\phi})=0\,\,{\rm in}\,\, (\bigwedge^2 V^*\otimes \bigwedge^2W_2)/M_2(V\cdot (V^*\otimes W_2))=I_{2,W},$$
there exists a unique $\phi \in V^*\otimes W_2$ mapping to $\overline{\phi}$, such that $$M_2(\phi)=0\,\,{\rm  in }\,\,\bigwedge^2 V^*\otimes \bigwedge^2W_2,$$ that is, $\phi$ has rank $\leq 1$.

(ii) Assume $g=4$. If
$ \overline{\phi}\in I_{1,V}=(V^*\otimes V_2)/V$ has the property that
$$\overline{M}_{2,V}(\overline{\phi})=0\,\,{\rm in}\,\, (\bigwedge^2 V^*\otimes \bigwedge^2V_2)/M_2(V\cdot (V^*\otimes V_2))=I_{2,V},$$
there exists a unique $\phi \in V^*\otimes V_2$ mapping to $\overline{\phi}$, such that
$$M_2(\phi)=0\,\,{\rm in }\,\,\bigwedge^2 V^*\otimes \bigwedge^2V_2,$$ that is, $\phi$ has rank $\leq 1$.
\end{prop}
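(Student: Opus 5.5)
The proof splits into a uniqueness step, which is formal, and an existence step, which is the real work; I treat (i) in detail and then adapt to (ii).

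\emph{Uniqueness.} Suppose $\phi$ and $\phi+v$, with $v\in V$ nonzero, both have rank $\le 1$. Here $v$ is viewed as the multiplication map $V\to W_2$, $u\mapsto uv$. This map has rank $g\ge 4$, since $uv=0$ in ${\rm Sym}^2V$ forces $u=0$. In the case of (ii) it has rank $\ge 3$, because only $u\in V$ with $uv$ proportional to $q$ can lie in the kernel, and $q$ has rank $4$. But $v=(\phi+v)-\phi$ is a difference of two maps of rank $\le 1$, so it has rank $\le 2$, a contradiction.

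\emph{Existence.} Choose any lift $\phi_0$ of $\overline{\phi}$. The hypothesis says that
$$\wedge^2\phi_0=\sum_i M_2(v_i\cdot\psi_i),$$
where $M_2(v\cdot\psi)$ is the polarized minor, i.e.\ $\wedge^2$ of the sum minus the two $\wedge^2$ of the parts. We want to show that this can be rewritten as $\wedge^2(\phi_0+v)=0$ for a single $v\in V$. To get a handle on the subspace $M_2(V\cdot(V^*\otimes W_2))$, I would use ${\rm GL}(V)$-equivariance and decompose everything into Schur functors. The space $\bigwedge^2V^*\otimes\bigwedge^2{\rm Sym}^2V$ decomposes, and the image of $V\otimes V^*\otimes{\rm Sym}^2V$ under $(v,\psi)\mapsto M_2(v\cdot\psi)$ is a computable subrepresentation. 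The goal is to show that, modulo this image, $\overline{M}_{2,W}$ is injective on the quadratic cone away from the rank-one-liftable locus.

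Concretely, I would proceed in three steps.
\begin{enumerate}
\item Contract the identity with $u\wedge u'\in\bigwedge^2V$. Evaluating at $u,u'$ gives
$$\phi_0(u)\wedge\phi_0(u')=\sum_i\bigl(v_iu\wedge\psi_i(u')-v_iu'\wedge\psi_i(u)\bigr)\quad\text{in }\bigwedge^2{\rm Sym}^2V.$$
\item Exploit the fact that the right side lies in the span of $(V\cdot u)\wedge W_2+(V\cdot u')\wedge W_2$. Hence $\phi_0(u)\wedge\phi_0(u')$ vanishes in $\bigwedge^2\bigl(W_2/(Vu+Vu')\bigr)$. Read for all pairs $u,u'$, this is a strong constraint. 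It should force $\phi_0(u)\equiv \lambda(u)w$ modulo $Vu$, for a fixed $w\in W_2$ and a linear form $\lambda$.
\item The relation $\phi_0(u)\in \lambda(u)w+Vu$ determines a map $u\mapsto a(u)\in V$, defined modulo the ambiguity coming from $uv=u'v'$. Using $g\ge4$ and the unique factorization of quadrics of rank $\le2$, show that $a(u)$ can be chosen linear in $u$ and equal to $-v\cdot$ for a fixed $v\in V$, up to terms absorbed into $\lambda(u)w$. Then $\phi=\phi_0+v$ is $u\mapsto\lambda(u)w$, which has rank $\le1$.
\end{enumerate}

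The main obstacle is step 2: extracting from the $\bigwedge^2$-relation holding modulo $Vu+Vu'$ that all the $\phi_0(u)$ lie on a common line modulo $Vu$. The vanishing only gives pairwise information in a quotient that varies with the pair, so the uniformity across different pairs has to be forced separately. I would first work generically, taking $u,u'$ in general position. There I would use that $W_2/(Vu+Vu')$ has dimension $\binom{g+1}{2}-2g+1$, which is large when $g\ge4$. Then I would use a third vector $u''$ to glue the pairwise rank-one conclusions; this is exactly where $g\ge 4$ should be needed, and where it fails for $g=3$. If this gluing proves hard, I would fall back to a direct representation-theoretic computation: identify the irreducible components of $\bigwedge^2V^*\otimes\bigwedge^2W_2$ hit by $M_2(V\cdot(V^*\otimes W_2))$, and check that the quadratic map $\overline{M}_{2,W}$ has no nonzero zeros on the complementary part of $I_{1,W}$.

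For (ii) the same scheme applies with $W_2$ replaced by $V_2=W_2/q$. The extra work in steps 2 and 3 is to handle the one-dimensional ambiguity by $q$. Since $q$ is nondegenerate of rank $4$, it is not of the form $uv$ or $uv+u'v'$ with small rank, and this is what allows the argument to go through in genus $4$.
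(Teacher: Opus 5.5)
\textbf{Verdict: there is a genuine gap.} Your uniqueness argument is correct, and your Steps 1--2 match the paper's setup. Evaluating the polarized minors on $u\wedge u'$ and passing to $W_2/(Vu+Vu')=\mathrm{Sym}^2(V/S)$, with $S=\langle u,u'\rangle$, is exactly the restriction $r_S$. It shows that for every $S\in G(2,V)$ the induced map $S\to\mathrm{Sym}^2(V/S)$ has rank $\le 1$. This is where $g\ge 4$ really enters: for $g=3$ the target $\mathrm{Sym}^2(V/S)$ is one-dimensional, so the condition is empty. It is not a matter of gluing with a third vector.

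Your Step 3 is the analogue of the paper's Lemma \ref{le1pourcar0}: an element whose restrictions vanish for all $S$ is a multiplication map. That part is fine. It can be done quickly by noting that $\{\chi:\chi(u)\in uV\ \forall u\}$ is a $\mathrm{GL}(V)$-stable subspace of $V^*\otimes\mathrm{Sym}^2V=V\oplus K$, with $K$ irreducible. This subspace contains $V$ but not $K$: test it on $x^*\otimes y^2$ with $x^*(y)=0$.

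\textbf{The gap in Step 2.} The conclusion of Step 2 is the whole content of the existence statement, and you leave it at ``it should force''. Passing from ``rank $\le1$ for each $S$'' to a single tensor $\lambda\otimes w$ is a globalization problem. Your third-vector gluing would have to control the proportionality constants and the degenerate pairs where the restrictions vanish. A Schur-functor decomposition does not by itself describe the zeros of a quadratic map modulo a subspace. Neither route is carried out.

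\textbf{The missing idea.} The paper's device is as follows.
\begin{itemize}
\item $S\mapsto r_S(\phi_0)$ is a global section $\alpha$ of $\mathcal{H}om(\mathcal{S},\mathrm{Sym}^2\mathcal{E})$ on $G=G(2,V)$.
\item Since $\alpha$ has rank $\le 1$ everywhere, it factors as $\mathcal{S}\to\mathcal{L}\to\mathrm{Sym}^2\mathcal{E}$ through a line bundle $\mathcal{L}$.
\item $\mathrm{Pic}(G)=\mathbb{Z}$. Both $H^0(\mathcal{S}^*\otimes\mathcal{L})$ and $H^0(\mathrm{Sym}^2\mathcal{E}\otimes\mathcal{L}^{-1})$ are nonzero, and they vanish for $\mathcal{L}$ negative, respectively positive. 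So $\mathcal{L}$ is trivial.
\item Hence $\alpha=\lambda\otimes w$ with $\lambda\in H^0(G,\mathcal{S}^*)=V^*$ and $w\in H^0(G,\mathrm{Sym}^2\mathcal{E})=W_2$.
\end{itemize}
This is precisely your ``fixed $w$ and linear form $\lambda$''.

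\textbf{Part (ii).} Your justification is incorrect. A nondegenerate quadratic form in four variables \emph{is} of the form $uv+u'v'$, since the quadric $Q$ contains two rulings of lines. For the planes $S$ corresponding to lines on $Q$ one has $q\in S\cdot V$, so $\mathrm{Sym}^2\mathcal{E}/q$ is not locally free there. The paper handles this as follows.
\begin{itemize}
\item It works on the open set $G^0$, whose complement has codimension $3$.
\item It uses $H^1(G^0,\mathcal{L}^{-1})=0$ to lift $i$ to $\mathrm{Sym}^2\mathcal{E}$ and extend sections to $G$.
\item It proves injectivity of $r'_1$ separately. An element whose image lies in $q\,\mathcal{O}_G$ has rank $\le 1$, so by Lemma \ref{le2pourcar0} it is of the form $\lambda\otimes q$, and Lemma \ref{le1pourcar0} then applies.
\end{itemize}
None of this is addressed in your proposal.
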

In both cases, the uniqueness follows from the assumption $g\geq 4$ (in fact $g\geq 3$ would suffice here). Indeed, assume there are two lifts $\phi,\,\phi'\in V^*\otimes W_2$, resp. $\phi,\,\phi'\in V^*\otimes V_2$ of $\overline{\phi}\in I_{1,W}$ (resp.  $\overline{\phi}\in I_{1,V}$) that have rank $\leq 1$. Then the difference $\phi-\phi'$ has rank $\leq 2$, and belongs to $V\subset {\rm Hom}(V,W_2)$ (resp.   to $V\subset {\rm Hom}(V,V_2)$). This contradicts the fact that, for any $0\not=v\in V$, the   map $\mu_v: V\rightarrow W_2$, (resp.  $\mu_v: V\rightarrow V_2$) of multiplication by $v$ is injective, hence has rank $g$ (resp. rank $4$).

The unicity being proved, we  concentrate below on the existence of the rank $1$ lifts.
\begin{proof}[Proof of Proposition \ref{propcarrenul}(i)]
For any  $2$-dimensional vector  subspace $S\subset V$, let $E:=V/S$.  We have two  restriction maps $V^*\rightarrow S^*$ and $W_2\rightarrow {\rm Sym}^2E$, which together  induce
$$r_S: V^*\otimes W_2\rightarrow S^*\otimes {\rm Sym}^2E.$$
We use the following observation made in \cite[Lemma 6.1]{verni}.
\begin{lemm}\label{leverni} For any $v\in V$, with induced  morphism
$$\mu_v\in {\rm Hom}(V,W_2)=V^*\otimes  W_2$$
of multiplication by $v$,
we have $r_S(\mu_V)=0$ in $S^*\otimes {\rm Sym}^2E$.
\end{lemm}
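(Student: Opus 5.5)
The statement to prove is Lemma \ref{leverni}: for $v\in V$, the multiplication map $\mu_v\in V^*\otimes W_2$ satisfies $r_S(\mu_v)=0$ in $S^*\otimes {\rm Sym}^2E$. (The statement writes $\mu_V$; this is a typo for $\mu_v$.) The plan is a direct unwinding of the definition of $r_S$. We will show that $r_S(\mu_v)$ is the tensor $\mu_v$ first restricted to $S$ on the source and then composed with the projection $W_2\to {\rm Sym}^2E$ on the target. This composite vanishes because every product $v\cdot s$ with $s\in S$ lies in the kernel $S\cdot V$ of ${\rm Sym}^2V\to {\rm Sym}^2E$.

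First we make the identification explicit. Via $V^*\otimes W_2={\rm Hom}(V,W_2)$, the map $V^*\to S^*$ is restriction to $S\subset V$. The map $W_2={\rm Sym}^2V\to {\rm Sym}^2E$ is ${\rm Sym}^2$ of the quotient map $p:V\to E=V/S$. Hence
$$r_S(\phi)=({\rm Sym}^2p)\circ\phi_{\mid S}\in {\rm Hom}(S,{\rm Sym}^2E)=S^*\otimes {\rm Sym}^2E.$$
To verify this, apply both sides to a pure tensor $\lambda\otimes w$ and evaluate on $s\in S$. The left side gives $\lambda_{\mid S}(s)\,{\rm Sym}^2p(w)$, which is also what the right side gives. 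Linearity then extends the identity to all of $V^*\otimes W_2$.

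Next, for $s\in S$ we have $\mu_v(s)=v\cdot s\in {\rm Sym}^2V$, and therefore
$${\rm Sym}^2p(v\cdot s)=p(v)\cdot p(s)=p(v)\cdot 0=0.$$
So $r_S(\mu_v)=0$ for every $v\in V$.

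There is no serious obstacle here. The only point needing care is the compatibility of the tensor-level restriction with the ${\rm Hom}$ description, which the pure-tensor check settles. One should also note that the same argument works with $V_2$ in place of $W_2$. The quotient $V_2=W_2/q$ does not map to ${\rm Sym}^2E$ unless $q$ is killed, so in that case the analogous statement would use ${\rm Sym}^2E/\bar q$. For the statement as given, with $W_2$, nothing further is needed.
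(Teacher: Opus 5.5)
Your proof is correct. With the paper's definitions, $r_S(\phi)=({\rm Sym}^2p)\circ\phi_{\mid S}$, and $p(s)p(v)=0$ for $s\in S$ gives $r_S(\mu_v)=0$; you are also right that $\mu_V$ is a typo for $\mu_v$. The paper gives no argument of its own here and simply quotes the observation from \cite[Lemma 6.1]{verni}, so your direct unwinding supplies the verification that citation stands for. Your remark that the $V_2$ case needs ${\rm Sym}^2E/q$ matches how the paper later adapts it.
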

From the restriction maps $V^*\rightarrow S^*$ and $W_2\rightarrow {\rm Sym}^2E$, we also deduce a restriction morphism $$r'_S: \bigwedge^2V^*\otimes \bigwedge^2W_2\rightarrow \bigwedge^2S^*\otimes \bigwedge^2({\rm Sym}^2E),$$
which is compatible with $r_S$ and the $2\times 2$-minors maps on both sides.   We then  get  from Lemma \ref{leverni} that $r'_S$ vanishes on $M_{2,W}(V\cdot (V^*\otimes W_2))$. Hence, using formula (\ref{eqI2W}) defining $I_{2,W}$, we  deduce  that, for any vector subspace $S\subset V$ of dimension $2$,  there is   a linear map
\begin{eqnarray}\label{eqnewdu26mai} r_{S,2}:I_{2,W}\rightarrow {\rm Hom}(\bigwedge^2S,\bigwedge^2({\rm Sym}^2E))\end{eqnarray}
obtained by  factorization of $r'_S$,
such that, for any $\overline{\phi}\in V^*\otimes W_2/V$, we have
\begin{eqnarray}\label{eqrestcarG} r_{S,2}(\overline{M}_2(\overline{\phi}))=M_{2,S}(r_S(\overline{\phi})),\end{eqnarray}
where the map $M_{2,S}$ on the right is the $2 \times 2$-minor map for ${\rm Hom}(S,{\rm Sym}^2E)$.

Let $G=G(2,V)$ be the Grassmannian of $2$-dimensional vector subspaces of $V$. Let
$\mathcal S\subset V\otimes \mathcal{O}_G$ be the tautological rank $2$ vector subbundle, and $\mathcal{E}:=V\otimes \mathcal{O}_G/\mathcal{S}$ the rank $g-2$ tautological quotient bundle.
Lemma \ref{leverni} provides a linear map

\begin{eqnarray}\label{eqrone}r_1: (V^*\otimes W_2)/V\rightarrow  H^0(G,\mathcal{H}om(\mathcal{S},{\rm Sym}^2\mathcal{E})).\end{eqnarray}

We also have the $2\times 2$-minors quadratic map
\begin{eqnarray}\label{eqmtwo}m_2: \mathcal{H}om(\mathcal{S},{\rm Sym}^2\mathcal{E}))\rightarrow  \mathcal{H}om(\bigwedge^2\mathcal{S},\bigwedge^2({\rm Sym}^2\mathcal{E}))
\\
\nonumber
m_2(\psi):=\wedge^2\psi.\end{eqnarray}
We will also denote  by $m_2$ the map induced by (\ref{eqmtwo}) at the level of global sections. By (\ref{eqnewdu26mai}),  we get a linear map
\begin{eqnarray}\label{eqrtwo}r_2: I_{2,W}\rightarrow H^0(G,\mathcal{H}om(\bigwedge^2\mathcal{S},\bigwedge^2({\rm Sym}^2\mathcal{E})))\end{eqnarray}
such that, for any $\overline{\phi}\in (V^*\otimes W_2)/V$, we have as in (\ref{eqrestcarG})
\begin{eqnarray}\label{eqfcatorization} r_2(\overline{M}_2(\overline{\phi}))=m_2(r_1(\overline{\phi}))
\,\,{\rm in}\,\, H^0(G,\mathcal{H}om(\bigwedge^2\mathcal{S}, \bigwedge^2({\rm Sym}^2\mathcal{E}))).\end{eqnarray}

We now prove  two lemmas.
\begin{lemm} \label{le1pourcar0} For $g\geq 4$, the morphism $r_1$ of (\ref{eqrone}) is injective.
\end{lemm}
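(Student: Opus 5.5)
The plan is to show directly that an element $\phi\in V^*\otimes W_2={\rm Hom}(V,{\rm Sym}^2V)$ with $r_1(\phi)=0$ is a multiplication map $\mu_v$ for some $v\in V$. Throughout, I view ${\rm Sym}^2V$ as the space of quadratic polynomials on $V^*$.

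\textbf{Step 1: translate the vanishing.} For a $2$-plane $S\subset V$, the kernel of ${\rm Sym}^2V\rightarrow {\rm Sym}^2E$ with $E=V/S$ is $S\cdot V$. In the polynomial picture, $S\cdot V$ is the space of quadrics on $V^*$ vanishing on the codimension $2$ subspace $S^\perp\subset V^*$. So $r_S(\phi)=0$ means that $\phi(s)$ vanishes on $S^\perp$ for every $s\in S$. Fix $s\neq 0$ and let $S$ run over all planes containing $s$. The subspaces $S^\perp$ are then exactly the codimension $2$ subspaces contained in the hyperplane $s^\perp$, and they cover $s^\perp$ as soon as $g\geq 3$. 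Hence $\phi(s)$ vanishes on $s^\perp$. Since $s^\perp$ is the zero set of the linear form $s$, this gives $\phi(s)=s\cdot \ell(s)$ for some $\ell(s)\in V$, uniquely determined once $s\neq 0$.

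\textbf{Step 2: linear algebra on a basis.} Choose a basis $e_1,\ldots,e_g$ of $V$ and write $\phi(e_i)=e_i\ell_i$. By linearity, $\phi(e_i+e_j)=e_i\ell_i+e_j\ell_j$, and by Step 1 this is divisible by $e_i+e_j$. Restrict to the hyperplane $\{e_i+e_j=0\}$ in $V^*$, where $e_j=-e_i$; there the restriction is $e_i(\ell_i-\ell_j)$. It vanishes identically, so $\ell_i-\ell_j$ vanishes on that hyperplane, i.e.
\[
\ell_i-\ell_j\in\langle e_i+e_j\rangle\quad\text{for all } i\neq j .
\]
Consequently there are $v\in V$ and scalars $c_i$ with $\ell_i=v+c_ie_i$, so that
\[
\phi(s)=s\cdot v+\sum_i c_i s_i^2 e_i^2 .
\]
The first term is $\mu_v(s)$, so it satisfies the divisibility condition by itself. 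Subtracting $\mu_v$, the diagonal part $c_ie_i^2+c_je_j^2$ must also be divisible by $e_i+e_j$. Restricting to $e_j=-e_i$ gives $c_i+c_j=0$ for all $i\neq j$. With three distinct indices $i,j,k$ this yields $c_i=-c_j=c_k=-c_i$, so all $c_i=0$. Hence $\phi=\mu_v$, which is $0$ in $(V^*\otimes W_2)/V$.

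\textbf{Main obstacle.} The delicate point is the passage from the pointwise factorization $\phi(s)=s\,\ell(s)$ to a global $v$. The assignment $s\mapsto \ell(s)$ is not a priori linear, and the diagonal ambiguity $c_ie_i$ has to be removed. The basis computation above handles both issues; it uses the hypothesis $g\geq 3$ twice, once for the covering argument in Step 1 and once for the three-index trick in Step 2, so the stated range $g\geq 4$ is more than enough. I would double-check the covering step for $g=3$, where $S^\perp$ is a line inside the plane $s^\perp$; the lines through the origin in that plane still cover it, so the step goes through.
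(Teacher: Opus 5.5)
Your proof is correct, and it takes a genuinely different route from the paper.

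\textbf{The paper's route.} The paper argues globally on $G=G(2,V)$. The kernel $S\cdot V$ of ${\rm Sym}^2V\to{\rm Sym}^2E$ maps onto $S\otimes E$ with kernel ${\rm Sym}^2S$, so the vanishing of $r_1(\overline{\phi})$ gives a section of $\mathcal{H}om(\mathcal{S},\mathcal{S}\otimes\mathcal{E})$. Tensoring $0\to\mathcal{S}\to V\otimes\mathcal{O}_G\to\mathcal{E}\to 0$ by $\mathcal{E}nd(\mathcal{S})$, and using $H^1(G,\mathcal{E}nd(\mathcal{S})\otimes\mathcal{S})=0$ and $H^0(G,\mathcal{E}nd(\mathcal{S}))=\mathbb{C}\,{\rm Id}$, this section comes from some $v\in V$. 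Then $\phi-\mu_v$ lands in $H^0(G,\mathcal{H}om(\mathcal{S},{\rm Sym}^2\mathcal{S}))=0$.

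\textbf{Your route.} You descend to $\mathbb{P}(V)$ instead. Covering $s^\perp$ by the subspaces $S^\perp$ with $S\ni s$ gives $\phi(s)\in s\cdot V$ for every $s$. You then show by a coordinate computation that a linear map with this property is a multiplication map.

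\textbf{What each approach buys.} Yours needs no cohomology vanishing on the Grassmannian and works already for $g\geq 3$. This is more than the paper's argument reaches: for $g=3$ one has $G\cong\mathbb{P}^2$ and $\mathcal{S}\cong\Omega_{\mathbb{P}^2}(1)$, and the group $H^1(G,\mathcal{E}nd(\mathcal{S})\otimes\mathcal{S})$ on which the paper relies is $6$-dimensional rather than zero. The paper's version has the advantage of being phrased in the same sheaf-theoretic language as Lemma \ref{le2pourcar0} and the $G^0$-variants used for part (ii).

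\textbf{Two points to tighten in Step 2.}

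First, since $\phi$ is linear, the formula should read $\phi(s)=s\cdot v+\sum_i c_is_ie_i^2$, not $\sum_i c_is_i^2e_i^2$. This is harmless, since you only evaluate at $s=e_i+e_j$.

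Second, you assert the existence of $v$ and $c_i$ with $\ell_i=v+c_ie_i$ without justification. It is true, and for $g\geq 3$ the direct argument makes the $c_i$ unnecessary. Write $\ell_i-\ell_j=a(e_i+e_j)$, $\ell_j-\ell_k=b(e_j+e_k)$ and $\ell_i-\ell_k=c(e_i+e_k)$. Comparing coefficients of $e_i,e_j,e_k$ in $a(e_i+e_j)+b(e_j+e_k)=c(e_i+e_k)$ gives $a=c$, $a+b=0$ and $b=c$, hence $a=b=c=0$. So all $\ell_i$ equal a single $v$, and $\phi=\mu_v$ follows at once.
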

\begin{proof} Let $\phi\in V^*\otimes W_2$ such that $r_1(\overline{\phi})=0$ in $H^0(G,\mathcal{H}om(\mathcal{S},{\rm Sym}^2\mathcal{E}))$. We note that, given a quotient
$E=V/S$, the kernel of the natural map ${\rm Sym}^2V\rightarrow {\rm Sym}^2E$  maps naturally to $S\otimes E$, with kernel isomorphic to ${\rm Sym}^2S$. By our assumption, $\phi$ thus induces a section $$s\in H^0(G,{ Hom}(\mathcal{S},\mathcal{S}\otimes \mathcal{E})).$$ We now write the exact sequence
$$0\rightarrow \mathcal{S}\rightarrow V\otimes \mathcal{O}_G\rightarrow \mathcal{E}\rightarrow 0$$
on $G$, tensor it  by $\mathcal{E}nd(\mathcal{S})$, and get by taking sections the long exact sequence
$$ \ldots \rightarrow H^0(G, \mathcal{E}nd(\mathcal{S})\otimes V)\rightarrow H^0(G,{ \mathcal{H}om}(\mathcal{S},\mathcal{S}\otimes \mathcal{E}))\rightarrow H^1(G, \mathcal{E}nd(\mathcal{S})\otimes \mathcal{S})\rightarrow\ldots.$$
The vanishing of $H^1(G, \mathcal{E}nd(\mathcal{S})\otimes \mathcal{S})$ and the fact that $H^0(G,\mathcal{E}nd(\mathcal{S}))=\mathbb{C}{\rm Id}$ (thanks to the assumption $g\geq 4$) thus imply that $s$ comes from an element $v$ of $V$. Thus the image of $\phi-\mu_v$ in $H^0(G, { \mathcal{H}om}(\mathcal{S},{\rm Sym}^2V\otimes \mathcal{O}_G))$ belongs to $H^0(G,{\mathcal{H}om}(\mathcal{S},{\rm Sym}^2\mathcal{S}))$, which is $0$ because $g\geq3$. This  implies that $\phi=\mu_v$.
\end{proof}

\begin{lemm} \label{le2pourcar0} If $0\not=\alpha\in H^0(G,\mathcal{H}om(\mathcal{S}, {\rm Sym}^2\mathcal{E}))$ satisfies
\begin{eqnarray}\label{eqcar0surG} m_2(\alpha)=0\,\,{\rm in}\,\, H^0(G,\mathcal{H}om(\bigwedge^2{S}, \bigwedge^2({\rm Sym}^2\mathcal{E}))),\end{eqnarray}
then there exists a rank $1$ element
$\psi\in V^*\otimes W_2$ whose image in $H^0(G,\mathcal{H}om(\mathcal{S},{\rm Sym}^2\mathcal{E}))$ equals $\alpha$.
\end{lemm}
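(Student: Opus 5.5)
The plan is to read the pointwise condition $m_2(\alpha)=0$ as saying that $\alpha$ has rank $\leq 1$ at every point, and then to extract from the kernel and the image of $\alpha$ a linear form $\lambda\in V^*$ and a quadric $\sigma\in W_2={\rm Sym}^2V$. The candidate is $\psi=\lambda\otimes\sigma$, and everything is controlled by computing line subbundles and sections of homogeneous bundles on $G=G(2,V)$.

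\emph{Step 1: the kernel of $\alpha$.} Since $\alpha\neq 0$, the locus $G^0\subset G$ where $\alpha_{[S]}\neq 0$ is a dense Zariski open set. On $G^0$, condition (\ref{eqcar0surG}) gives rank exactly $1$, so $\ker\alpha_{[S]}$ is a line in $S$ and $\alpha$ factors through $S/\ker\alpha_{[S]}$. Let $\mathcal{K}\subset\mathcal{S}$ be the saturation of the kernel sheaf $\ker(\alpha:\mathcal{S}\rightarrow{\rm Sym}^2\mathcal{E})$. It is a reflexive rank-one subsheaf of $\mathcal{S}$ on the smooth variety $G$, hence a line bundle. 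Since ${\rm Pic}\,G=\mathbb{Z}$, we have $\mathcal{K}\cong\mathcal{O}_G(-a)$, where $\mathcal{O}_G(1)=\det\mathcal{E}=(\bigwedge^2\mathcal{S})^{-1}$.
\begin{itemize}
\item Because $\mathcal{K}\subset V\otimes\mathcal{O}_G$, we get $a\geq 0$.
\item If $a=0$, then $\mathcal{K}$ is spanned by a fixed vector $u\in V$ lying in every $S$ in a dense open set, which is absurd. So $a\geq 1$.
\end{itemize}
The quotient $\mathcal{S}/\mathcal{K}$ is torsion free of rank one and agrees with the line bundle $\bigwedge^2\mathcal{S}\otimes\mathcal{K}^{-1}\cong\mathcal{O}_G(a-1)$ outside codimension $2$. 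Hence $\alpha$ induces a nonzero section of ${\rm Sym}^2\mathcal{E}(1-a)$, first on the complement of a codimension $\geq 2$ set and then on all of $G$ by Hartogs.

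\emph{Step 2: forcing $a=1$.} I will show $H^0(G,{\rm Sym}^2\mathcal{E}(-k))=0$ for $k\geq 1$. By Borel--Weil, ${\rm Sym}^2\mathcal{E}\otimes\det\mathcal{S}^{\otimes k}$ is an irreducible homogeneous bundle whose highest weight is non-dominant, so its space of sections vanishes. An alternative argument: sections of ${\rm Sym}^2\mathcal{E}(-1)$ restricted to lines in $G$ (pencils of planes in a fixed $3$-space) are sections of a bundle with only nonpositive summands, which vanish on a generic fiber. Thus $a=1$ and $\mathcal{K}\cong\mathcal{O}_G(-1)=\bigwedge^2\mathcal{S}$.

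\emph{Step 3: identifying $\lambda$ and $\sigma$.} An inclusion $\bigwedge^2\mathcal{S}\rightarrow\mathcal{S}$ is a section of $\mathcal{S}\otimes\bigwedge^2\mathcal{S}^{-1}\cong\mathcal{S}^*$, and $H^0(G,\mathcal{S}^*)=V^*$. So there is $\lambda\in V^*$ with $\mathcal{K}_{[S]}=\ker(\lambda_{\mid S})$ generically, and $\lambda$ induces the isomorphism $\mathcal{S}/\mathcal{K}\cong\mathcal{O}_G$. The induced section of ${\rm Sym}^2\mathcal{E}$ lies in $H^0(G,{\rm Sym}^2\mathcal{E})$, which is the image of $H^0(G,{\rm Sym}^2V\otimes\mathcal{O}_G)={\rm Sym}^2V=W_2$ (Borel--Weil again, or surjectivity from the Koszul-type resolution). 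So it is the image of some $\sigma\in W_2$. Set $\psi=\lambda\otimes\sigma$. By construction its image in $H^0(G,\mathcal{H}om(\mathcal{S},{\rm Sym}^2\mathcal{E}))$ is $S\ni s\mapsto\lambda(s)\,(\sigma\ {\rm mod}\ S)$, which agrees with $\alpha$ on a dense open set and hence everywhere, and $\psi$ has rank $1$.

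The main obstacle I expect is the cohomological input in Steps 2 and 3: the vanishing $H^0(G,{\rm Sym}^2\mathcal{E}(-k))=0$ for $k\geq 1$, and the identification of $H^0(G,\mathcal{S}^*)$ and $H^0(G,{\rm Sym}^2\mathcal{E})$. I would try Borel--Weil--Bott on $G(2,V)$ first. A secondary point needing care is the saturation and Hartogs argument of Step 1, which makes the kernel line a genuine line bundle and extends the induced section across the codimension-$2$ degeneracy locus.
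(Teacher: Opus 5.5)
Your proof is correct and is essentially the paper's argument: the paper factors $\alpha$ through its image line bundle $\mathcal{L}$ and rules out $\mathcal{L}$ strictly negative (because $H^0(G,\mathcal{S}^*\otimes\mathcal{L})\neq 0$) or strictly positive (because $H^0(G,{\rm Sym}^2\mathcal{E}\otimes\mathcal{L}^{-1})\neq 0$), which is exactly your analysis of $\mathcal{S}/\mathcal{K}\cong\mathcal{O}_G(a-1)$, and your saturation/Hartogs step supplies details the paper leaves implicit. One caution: drop the ``alternative'' argument in your Step 2, because it is false. On a line of $G$ one has $\mathcal{E}\cong\mathcal{O}(1)\oplus\mathcal{O}^{g-3}$, so ${\rm Sym}^2\mathcal{E}(-1)$ restricts to $\mathcal{O}(1)\oplus\mathcal{O}^{g-3}\oplus\mathcal{O}(-1)^{\binom{g-2}{2}}$, which has a positive summand; it is the Borel--Weil--Bott argument that gives the needed vanishing.
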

\begin{proof} The condition $m_2(\alpha)=0$ means that $\alpha $ has everywhere rank $\leq 1$. Thus, if $\alpha\not=0$, there is a line bundle $\mathcal{L}$ on $G$ and two nonzero  morphisms
\begin{eqnarray}\label{eqbetaeti} \beta:\mathcal{S}\rightarrow \mathcal{L},\, i:\mathcal{L}\rightarrow {\rm Sym}^2\mathcal{E},\end{eqnarray}
such that \begin{eqnarray}\label{eqalphafactor} \alpha=i\circ \beta: \mathcal{S}\rightarrow {\rm Sym}^2\mathcal{E}.\end{eqnarray}
 The line bundle $\mathcal{L}\in {\rm Pic}(G)$ is a power of the Pl\"{u}cker line bundle. It cannot be strictly positive, because the existence of the morphism $i\not=0$ shows that  $H^0(G,{\rm Sym}^2\mathcal{E}\otimes \mathcal{L}^{-1})\not=0$. The line bundle $\mathcal{L}$ cannot be strictly negative because $H^0(G,\mathcal{S}^*\otimes \mathcal{L})\not=0$. It follows that $\mathcal{L}=\mathcal{O}_G$. Using (\ref{eqbetaeti}) with $\mathcal{L}$ trivial,  (\ref{eqalphafactor}) gives that the morphism $\alpha\in H^0(G,\mathcal{S}^*\otimes {\rm Sym}^2\mathcal{E})$  can be written as $u\otimes v$, where $u\in H^0(G,\mathcal{S}^*)=V$, and $v\in H^0(G,{\rm Sym}^2\mathcal{E})={\rm Sym}^2H^0(G,\mathcal{E})=W_2$. Thus Lemma \ref{le2pourcar0} is proved.
\end{proof}
Using formula (\ref{eqrestcarG}), Lemma \ref{le2pourcar0} shows that for $\overline{\phi}\in V^*\otimes W_2/V$, the condition $\overline{M}_{2,W}(\overline{\phi})=0$ implies that there exists a rank $1$ element $\psi\in V^*\otimes W_2$ whose image $\overline{\psi}\in (V^*\otimes W_2)/V$ satisfies
$r_1(\overline{\phi}-\overline{\psi})=0$. Lemma  \ref{le1pourcar0} then  gives that $\overline{\phi}=\overline{\psi}$, which proves Proposition \ref{propcarrenul}(i).
\end{proof}
\begin{proof}[Proof of Proposition \ref{propcarrenul}(ii)] The proof is very similar.
We now work with $V_2={\rm Sym}^2V/q=W_2/q$, and with the  space of infinitesimal invariants
\begin{eqnarray}\label{eqI2V2704bis} I_{2,V}:=(\bigwedge^2V^*\otimes \bigwedge^2V_2)/M_{2,V}(V\cdot (V^*\otimes V_2)).
\end{eqnarray}  We now study the linear map
$$\overline{M}_{2,V}: {\rm Sym}^2((V^*\otimes V_2)/V)\rightarrow I_{2,V}$$
of (\ref{eqM2barpourV}),
obtained by factorization of the $2\times 2$-minor map $$M_{2,V}:{\rm Sym}^2(V^*\otimes V_2)\rightarrow \bigwedge^2V^*\otimes \bigwedge^2V_2.$$
The other adjustments that we have to make are the following: using Lemma  \ref{leverni}, we now get the following analogue
\begin{eqnarray}\label{eqroneprime}r'_1: (V^*\otimes V_2)/V\rightarrow  H^0(G,\mathcal{H}om(\mathcal{S},{\rm Sym}^2\mathcal{E}/q)) \end{eqnarray}
of $r_1$,
where we use the notation $q$ for its natural image in ${\rm Sym}^2\mathcal{E}$ under  restriction. The coherent sheaf ${\rm Sym}^2\mathcal{E}/q$ is not locally free anymore on $G$, but it is locally free on the open set $G^0\subset G$ parameterizing   lines not contained in $\{q=0\}$. We recall that we have now $g=4$ so the rank of $\mathcal{E}$ is $2$. In particular, the codimension of $G\setminus G^0$ in $G$ is $3$. On $G^0$, the $2\times 2$-minors map $m_2$ of (\ref{eqmtwo}) is now replaced with the quadratic map
\begin{eqnarray}\label{eqmtwoprime} m'_2:\mathcal{H}om(\mathcal{S},{\rm Sym}^2\mathcal{E}/q)\rightarrow  \mathcal{H}om(\bigwedge^2\mathcal{S},\bigwedge^2({\rm Sym}^2\mathcal{E}/q)),\\
\nonumber
m'_2(\psi):=\wedge^2\psi.\end{eqnarray}
(We will use the same notation $m'_2$  for the induced map at the level of global sections). We also get
 a linear map
\begin{eqnarray}\label{eqrtwoprime}r'_2: I_{2,V}\rightarrow H^0(G^0,\mathcal{H}om(\bigwedge^2\mathcal{S},\bigwedge^2({\rm Sym}^2\mathcal{E}/q)))\end{eqnarray}
such that, for any $\overline{\phi}\in (V^*\otimes V_2)/V$,
\begin{eqnarray}\label{eqfcatorization1} r'_2(\overline{M}_{2,V}(\overline{\phi}))=m'_2(r'_1(\overline{\phi}))
\,\,{\rm in}\,\, H^0(G^0,\mathcal{H}om(\bigwedge^2\mathcal{S}, \bigwedge^2({\rm Sym}^2\mathcal{E}/q))).\end{eqnarray}

We now prove two  lemmas:
\begin{lemm} \label{le1pourcar0prime} The morphism $r'_1$ of (\ref{eqroneprime}) is injective.
\end{lemm}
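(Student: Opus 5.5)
The plan is to reduce Lemma \ref{le1pourcar0prime} to the already proved Lemma \ref{le1pourcar0}, which applies since $g=4$, by lifting from $V_2={\rm Sym}^2V/q$ to $W_2={\rm Sym}^2V$. The extra ambiguity coming from $q$ is then absorbed using a Hartogs-type extension over $G^0$.

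\emph{Step 1: lift and compare the two restriction maps.} Let $\phi\in V^*\otimes V_2$ with $r'_1(\overline{\phi})=0$, and choose a lift $\tilde\phi\in V^*\otimes W_2$; the lift is unique up to $V^*\otimes \mathbb{C}q$. By construction, $r'_1(\overline\phi)$ is the image of $r_1(\tilde\phi)\in H^0(G,\mathcal{H}om(\mathcal{S},{\rm Sym}^2\mathcal{E}))$ under the quotient map ${\rm Sym}^2\mathcal{E}\rightarrow {\rm Sym}^2\mathcal{E}/q$. Write $\bar q\in H^0(G,{\rm Sym}^2\mathcal{E})$ for the image of $q$. The vanishing of $r'_1(\overline\phi)$ then says that at every point $[S]\in G$, the map $r_S(\tilde\phi):S\rightarrow {\rm Sym}^2E$ takes values in the line $\mathbb{C}\bar q_{[S]}$.

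\emph{Step 2: extract a linear form via Hartogs.} On $G^0$ the section $\bar q$ is nowhere vanishing, so $\mathcal{O}_{G^0}\cdot\bar q$ is a line subbundle of ${\rm Sym}^2\mathcal{E}_{|G^0}$. Step 1 therefore gives a unique section $\lambda\in H^0(G^0,\mathcal{S}^*)$ with $r_1(\tilde\phi)_{|G^0}=\lambda\otimes\bar q$. Since $G\setminus G^0$ has codimension $3$ in $G$ and $\mathcal{S}^*$ is locally free, $\lambda$ extends to a section of $\mathcal{S}^*$ on all of $G$. Hence $\lambda$ is the restriction of a linear form $l\in H^0(G,\mathcal{S}^*)=V^*$.

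\emph{Step 3: correct the lift and apply Lemma \ref{le1pourcar0}.} The element $l\otimes q\in V^*\otimes W_2$ satisfies $r_1(l\otimes q)=l_{|\mathcal{S}}\otimes\bar q$. Set $\tilde\phi':=\tilde\phi-l\otimes q$, which is another lift of $\phi$. Then $r_1(\tilde\phi')$ vanishes on $G^0$. Because $\mathcal{H}om(\mathcal{S},{\rm Sym}^2\mathcal{E})$ is locally free and $G^0$ is dense, $r_1(\tilde\phi')=0$ on $G$. Lemma \ref{le1pourcar0}, valid for $g\geq 4$, now gives $\tilde\phi'=\mu_v$ for some $v\in V$. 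Projecting to $V^*\otimes V_2$, we get $\phi=\mu_v$ modulo $V^*\otimes q$, i.e.\ $\phi$ is the multiplication map by $v$ into $V_2$. Thus $\overline\phi=0$ in $(V^*\otimes V_2)/V$, which proves injectivity.

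The main obstacle is Step 2: one must make sure that the pointwise factorization through $\bar q$ really defines a regular section $\lambda$ of $\mathcal{S}^*$ on $G^0$, and that the codimension of the degeneracy locus of $\bar q$ is at least $2$, so that the extension to $G$ holds. The paper states the codimension is $3$. If this failed, one would instead need a direct computation of $H^0(G^0,\mathcal{S}^*)$.
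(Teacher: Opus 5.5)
Your proof is correct and has the same structure as the paper's: show that $r_1$ of a lift equals $r_1(l\otimes q)$ for some $l\in V^*$, subtract this term, and conclude with Lemma \ref{le1pourcar0}. The only difference is how you produce the correction term. The paper applies Lemma \ref{le2pourcar0} to the rank $\leq 1$ section $r_1(\psi)$ to obtain a rank one tensor $v\otimes q'$, and then argues that $q'$ is proportional to $q$ because their restrictions to general lines are. You instead factor directly through the line subbundle $\mathcal{O}_{G^0}\bar q$; this makes $\lambda$ automatically regular, since that subbundle has locally free quotient. You then extend $\lambda$ by Hartogs across the locus of lines contained in $Q$, which does have codimension $3$. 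This skips the proportionality step and makes explicit the extension that is left implicit when Lemma \ref{le2pourcar0} is applied to a section whose image degenerates along $G\setminus G^0$.
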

\begin{proof} Let $\phi\in V^*\otimes W_2$ be such that its image $\overline{\phi}\in I_{1,V}$ satisfies $r'_1(\overline{\phi})=0$. Denoting by $\psi$ the  image of $\phi$ in $(V^*\otimes W_2)/V$,  this implies that the  morphism
$$ r_1(\psi)\in H^0(G, \mathcal{H}om(\mathcal{S}, {\rm Sym}^2\mathcal{E}))$$
has its image contained in $q\mathcal{O}_G\subset {\rm Sym}^2\mathcal{E}$, hence has everywhere rank $\leq 1$.  By Lemma \ref{le2pourcar0}, there exists a rank $1$ element  $w\in V^*\otimes W_2$ such that
its image in $H^0(G, \mathcal{H}om(\mathcal{S}, {\rm Sym}^2\mathcal{E}))$ equals $r_1(\psi)$. One has
$w=v\otimes q'$ for some element $q'\in S^2V$, and
in fact $q'$ must be proportional to $q$ since it becomes proportional to $q$ in  ${\rm Sym}^2E$, for a general  quotient $E$ of $V$ of dimension $2$. Hence one concludes by Lemma \ref{le1pourcar0} that, modulo $V\subset V^*\otimes W_2$, $\psi$ belongs to $V^*\otimes q\subset V^*\otimes W_2$. So modulo $q$, $\psi$ belongs to $V\subset V^*\otimes V_2$ and the lemma is proved.
\end{proof}
\begin{lemm} \label{le2pourcar0prime} If $\alpha\in H^0(G^0,\mathcal{H}om(\mathcal{S}, {\rm Sym}^2\mathcal{E}/q))$ satisfies
\begin{eqnarray}\label{eqcar0surG1} m'_2(\alpha)=0\,\,{\rm in}\,\, H^0(G^0,\mathcal{H}om(\bigwedge^2{S}, \bigwedge^2({\rm Sym}^2\mathcal{E}/q))),\end{eqnarray}
then there exists a rank $1$ element
$\psi\in V^*\otimes V_2$ whose image in $H^0(G^0,\mathcal{H}om(\mathcal{S},{\rm Sym}^2\mathcal{E}/q))$ equals $\alpha$.
\end{lemm}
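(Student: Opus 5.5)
We follow the argument of Lemma \ref{le2pourcar0}, with two modifications: we work on $G^0$ instead of $G$, and we pass through the quotient ${\rm Sym}^2\mathcal{E}/q$. Here $g=4$, so $G=G(2,4)$ has dimension $4$ and $\mathcal{E}$ has rank $2$. Consequently ${\rm Sym}^2\mathcal{E}$ has rank $3$, and ${\rm Sym}^2\mathcal{E}/q$ is a rank $2$ vector bundle on $G^0$.

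First, if $\alpha=0$ we take $\psi=0$, so assume $\alpha\neq0$. The condition $m'_2(\alpha)=0$ says that $\alpha$ has rank exactly $1$ on a dense open subset of $G^0$. On $G^0$ the image of $\alpha$ then saturates to a line subbundle outside a closed subset, and likewise $\ker\alpha$ defines a line subbundle $\mathcal{K}\subset\mathcal{S}$ there. Since $G\setminus G^0$ has codimension $3$, and since the saturation of a rank $1$ subsheaf of a vector bundle is reflexive, hence a line bundle in codimension $\geq 2$ on a smooth variety, we get a factorization
$$\alpha=i\circ\beta,\qquad \beta:\mathcal{S}\rightarrow\mathcal{L},\quad i:\mathcal{L}\rightarrow {\rm Sym}^2\mathcal{E}/q,$$
on $G^0$ minus a subset of codimension $\geq2$. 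Here $\mathcal{L}$ extends to a line bundle on all of $G$, because ${\rm Pic}$ does not change when removing codimension $\geq2$ subsets, and we write $\mathcal{L}=\mathcal{O}_G(k)$. By Hartogs' theorem the sections $\beta\in H^0(\mathcal{S}^*\otimes\mathcal{L})$ and $i$ extend over the removed locus.

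The key step is to show $k=0$. Nonvanishing of $\beta$ gives $H^0(G,\mathcal{S}^*(k))\neq0$, hence $k\geq0$, since $H^0(G,\mathcal{S}^*(-1))=0$. For the upper bound, we lift $i$ to ${\rm Sym}^2\mathcal{E}$. From the exact sequence $0\rightarrow\mathcal{O}(-k)\rightarrow{\rm Sym}^2\mathcal{E}(-k)\rightarrow({\rm Sym}^2\mathcal{E}/q)(-k)\rightarrow0$ on $G^0$, with cohomology computed on $G$ by depth $\geq3$ (so $H^0$ and $H^1$ of bundles on $G^0$ agree with those on $G$), and with $H^1(G,\mathcal{O}(-k))=0$, the section $i$ lifts to $\tilde i\in H^0(G,{\rm Sym}^2\mathcal{E}(-k))$. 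If $k>0$ this group vanishes, because ${\rm Sym}^2\mathcal{E}$ is globally generated with $\det{\rm Sym}^2\mathcal{E}=\mathcal{O}(3)$ and its sections are exactly ${\rm Sym}^2V$, none of which vanish to positive Plücker degree. This is the Borel--Weil computation $H^0({\rm Sym}^2\mathcal{E}(-1))=0$. Thus $k=0$.

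With $\mathcal{L}$ trivial, $\beta\in H^0(G,\mathcal{S}^*)=V$, wait: $H^0(G,\mathcal{S}^*)=V^*$ up to the paper's identification, and $i$ is the image of some element $w\in H^0(G,{\rm Sym}^2\mathcal{E})=W_2$, taken modulo $q$, i.e. an element of $V_2$. Then $\psi:=\beta\otimes\bar w\in V^*\otimes V_2$ has rank $\leq1$ and restricts to $\alpha$.

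The main obstacle is the codimension bookkeeping needed to extend $\mathcal{L}$, $\beta$ and $i$ across $G\setminus G^0$ and the degeneracy locus. The sheaf ${\rm Sym}^2\mathcal{E}/q$ is only a sheaf on $G$, so we argue throughout with reflexive hulls and Hartogs, using that $G\setminus G^0$ has codimension $3$ and that $q$ is nonvanishing in ${\rm Sym}^2\mathcal{E}$ on $G^0$.
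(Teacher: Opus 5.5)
Your proposal is correct and follows essentially the same route as the paper. You factor $\alpha=i\circ\beta$ through a line bundle $\mathcal{L}$, exclude negative degree via $\beta\in H^0(\mathcal{S}^*\otimes\mathcal{L})$, and exclude positive degree by lifting $i$ to $H^0(G^0,{\rm Sym}^2\mathcal{E}\otimes\mathcal{L}^{-1})$ using $H^1(G^0,\mathcal{L}^{-1})=0$ from the codimension-$3$ complement. You then read off $\psi$ from $H^0(G,\mathcal{S}^*)=V^*$ and $H^0(G^0,{\rm Sym}^2\mathcal{E}/q)=V_2$; your reflexive-hull/Hartogs bookkeeping only makes explicit the existence of $\mathcal{L}$, which the paper asserts directly.
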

\begin{proof} Our assumption is that $\alpha: \mathcal{S}\rightarrow  {\rm Sym}^2\mathcal{E}/q$ has rank $\leq 1$ everywhere, hence, for some line bundle $\mathcal{L}$ on $G$, it factors as $i\circ \beta$, where
$$\beta: \mathcal{S}\rightarrow \mathcal{L}$$
is defined on $G^0$ and $i:\mathcal{L}\rightarrow {\rm Sym}^2\mathcal{E}/q$ is an injective morphism.   As in the proof of Lemma \ref{le2pourcar0}, we discuss the possible values of $\mathcal{L}$. It cannot be strictly negative since
 $$H^0(G^0,\mathcal{S}^*\otimes \mathcal{L})=H^0(G,\mathcal{S}^*\otimes \mathcal{L})$$ is nonzero, because it contains $\beta$. Next, $H^0(G^0,\mathcal{L}^{-1}\otimes ({\rm Sym}^2\mathcal{E}/q))\not=0$ because it contains $i$. It follows that \begin{eqnarray}\label{eqaeroport}H^0(G^0,\mathcal{L}^{-1}\otimes {\rm Sym}^2\mathcal{E})\not=0.\end{eqnarray} Indeed, $H^1(G^0,\mathcal{L}^{-1})=0$, because ${\rm codim}(G\setminus G^0\subset G)\geq 3$, so the claim follows from the exact sequence
 $$0\rightarrow \mathcal{L}^{-1}\rightarrow \mathcal{L}^{-1}\otimes {\rm Sym}^2\mathcal{E}\rightarrow \mathcal{L}^{-1}\otimes ({\rm Sym}^2\mathcal{E}/q)\rightarrow 0$$
 on $G^0$. As
 $$H^0(G^0,\mathcal{L}^{-1}\otimes {\rm Sym}^2\mathcal{E})=H^0(G,\mathcal{L}^{-1}\otimes {\rm Sym}^2\mathcal{E}),$$  we conclude from (\ref{eqaeroport}) that $H^0(G,\mathcal{L}^{-1}\otimes {\rm Sym}^2\mathcal{E})\not=0$ and thus $\mathcal{L}$ cannot be strictly positive. Thus the line bundle $\mathcal{L}$ is trivial, and we conclude exactly as in the proof of Lemma \ref{le2pourcar0}, using now the facts that $H^0(G,\mathcal{S}^*)=V^*$ and $H^0(G^0,Sym^2\mathcal{E}/q)=W_2/q=V_2$.
\end{proof}
The proof of Proposition \ref{propcarrenul}(ii)  follows  from these two lemmas exactly as before.
\end{proof}
\subsection{Proof of Theorem \ref{theoabelian}}
We recall that Theorem \ref{theoabelian} is the following statement.
\begin{theo}\label{theoabeliantexte} (i)  Let $A $ be a very general abelian variety of dimension at least $4$. Then, if $D\in {\rm CH}^1(A)_{\rm hom}={\rm Pic}^0(A)$ satisfies  $D^2=0$ in ${\rm CH}^2(A)$, $D$ is of torsion in ${\rm Pic}^0(A)$.

(ii)  Let $C$ be a very general curve of genus $ 4$. Then if $D\in {\rm Pic}^0(C)={\rm Pic}^0(J(C))={\rm CH}^1(J(C))$ satisfies  $D^2=0$ in ${\rm CH}^2(J(C))$, $D$ is of torsion in ${\rm Pic}^0(C)$.
\end{theo}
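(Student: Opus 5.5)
We argue by spreading out the very general situation over a family. Since the locus of pairs $(A,D)$ with $D^2=0$ in ${\rm CH}^2(A)$ is a countable union of closed algebraic subsets of the relative ${\rm Pic}^0$, the very general hypothesis lets us find, for a nontorsion counterexample, a generically finite dominant (étale-locally) base change $M\rightarrow \mathcal{A}_g$ (resp. $M\rightarrow \mathcal{M}_4$), a family $\pi:J\rightarrow M$, and a divisor $D$ on $J$, homologous to $0$ on fibers, such that $D_m^2=0$ in ${\rm CH}^2(J_m)$ for all $m\in M$. The associated normal function $\nu=\nu_D$ is nontorsion. Theorem \ref{theomumfordvar} applied to $Z=D^2$, together with (\ref{eqcarrecarrecycle}), gives $(\delta D)^2=\delta(D^2)=0$ at the general point $m$. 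By the identification of the square map with $\overline{M}_{2,W}$ (resp. $\overline{M}_{2,V}$), Proposition \ref{propcarrenul} produces a unique rank $\leq 1$ lift $\phi_m\in V^*\otimes W_2$ (resp. $V^*\otimes V_2$) of $\delta\nu(m)$.

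By uniqueness, $m\mapsto\phi_m$ is an algebraic section of $\mathcal{H}^{0,1}\otimes\Omega_M$ lifting $\delta\nu$. Equivalently, we may choose local lifts $\tilde\nu$ of $\nu$, corrected by a section of $\mathcal{H}^{1,0}$, with $(\nabla\tilde\nu)^{0,1}=\phi$ decomposable: $\phi=\sigma\otimes\alpha$ with $\sigma\in\mathcal{H}^{0,1}$ and $\alpha$ a $1$-form on $M$. We then exploit $\nabla\circ\nabla=0$, as in the proof of Lemma \ref{lepourlatorsion}. The $(0,1)$-part of $\nabla(\nabla\tilde\nu)$ involves $\overline{\nabla}_1$ of the $(1,0)$-part plus $\nabla$ of $\sigma\otimes\alpha$. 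This should force strong integrability conditions on the line field $\sigma$ and the form $\alpha$, for instance $\alpha\wedge d\alpha=0$ and $\nabla\sigma\in\sigma\otimes\Omega_M$ modulo $\mathcal{H}^{1,0}$. Roughly, $[\nu]$ is then represented by a closed decomposable algebraic section of $\mathcal{H}^1\otimes\Omega_M$.

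The key step, and the main obstacle, is to show $\phi=0$. The algebraic data (a flat-like rank $1$ subobject, or a foliation defined by $\alpha$ along which a rank $1$ piece of $\mathcal{H}^1$ is flat) gives an algebraic subvariety of the period domain whose intersection with the Hodge-theoretic flat structure is too large. An Ax–Shanuel type theorem (Bakker–Tsimerman, Baldi and coauthors) then forces it to lie in a weakly special subvariety. Since the monodromy is Zariski dense in ${\rm Sp}(2g)$ both for $\mathcal{A}_g$ and for $\mathcal{M}_4$, there is no proper such subvariety, and we conclude $\phi\equiv0$. I expect this to be the step cited as Proposition \ref{lecructresgenpourpropourcrcutheoab}, and it is the delicate part: making the algebraicity of $\phi$ and its leaves precise enough to apply Ax–Schanuel.

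Once $\phi=0$, $\delta\nu=0$ at the general point. We then apply Lemma \ref{lepourlatorsion}. Its hypotheses (i) and (ii), injectivity of $\overline{\nabla}$ and $\overline{\nabla}_1$, hold for $\mathcal{A}_g$ with $g\geq 4$ and for $\mathcal{M}_4$. This follows from the injectivity of multiplication maps $V\otimes W_2\rightarrow$ etc.; in fact it is essentially the injectivity of $\mu_v$ used above, plus the analogous statement with $\Omega^2$, where $\overline{\nabla}_1$ is checked via a Koszul argument. Hypothesis (iii) is classical for both moduli spaces. Hence $\nu$ is torsion, a contradiction, proving both (i) and (ii).
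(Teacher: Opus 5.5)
Your overall architecture is the paper's: spreading out, $\delta(D^2)=(\delta D)^2=0$, the unique rank $\le 1$ lift from Proposition \ref{propcarrenul}, a closed decomposable algebraic section of $\mathcal{H}^1\otimes\Omega_M$, Ax--Schanuel, then Lemma \ref{lepourlatorsion}. However, the middle step is only guessed.

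\textbf{The middle step.} Decomposability of the $(0,1)$-part $(\nabla\tilde\nu)^{0,1}=\sigma\otimes\alpha$ gives you neither $\alpha\wedge d\alpha=0$ nor a closed decomposable section. Write $\nabla\tilde\nu_0=\beta_0\otimes\alpha+\beta'$ with $\beta'\in\mathcal{H}^{1,0}\otimes\Omega_M$. Then $\nabla\circ\nabla=0$ only yields $\overline{\nabla}_1(\beta')=-(\nabla\beta_0)^{0,1}\wedge\alpha-\beta_0^{0,1}\otimes d\alpha$. You need a separate pointwise statement (Claim \ref{claim1404}) that this forces $\beta'=\beta''\otimes\alpha$, so that after correcting the lift the \emph{full} derivative is $\nabla\tilde\nu=\beta\otimes\alpha$.

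The paper proves this by passing to the hyperplane $V_0=\ker\beta_0^{0,1}$ and to $W_2/\langle\alpha\rangle$ (resp.\ $V_2/\langle\alpha\rangle$). This reduces the claim to $V_0\cdot H_r\subset H_{r+1}$ with $\dim H_r=r$, which the Hopf lemma ($\dim V_0\cdot H_r\geq g+r-2$) excludes for $r>0$. This is a second place where $g\geq 4$ is needed. Only after this does differentiating $\beta\otimes\alpha$ give integrability of $\alpha$ and flatness of $f^{-1}\beta$ along its leaves.

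Likewise, algebraicity of $\phi$ via uniqueness is not what Ax--Schanuel needs. You need the line $\mathbb{C}\beta\subset\mathcal{H}^1$ to be algebraic, i.e.\ the full class including its $(1,0)$-component. The paper gets this by lifting $\nu$ Zariski-locally to the algebraic group scheme of line bundles with integrable connection, where the Gauss--Manin connection is algebraic, and rerunning the uniqueness argument there.

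\textbf{The endgame.} Your sentence ``since the monodromy is Zariski dense in ${\rm Sp}(2g)$, there is no proper such subvariety'' is false. $\mathcal{A}_g$ and $\mathcal{M}_4$ contain many proper $\nabla$-special subvarieties, and Zariski density of the global monodromy is only the hypothesis of the Ax--Schanuel theorem.

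What the theorem actually gives comes from an atypical intersection. Take a flat leaf $\mathfrak{F}_e$ of the symplectic frame bundle and the algebraic subvariety $\mathfrak{F}_{\mathcal{L}}$ of frames whose first vector lies in $\mathbb{C}\beta$. Their intersection has codimension $\leq 1$ in $\mathfrak{F}_e$, while $\mathfrak{F}_{\mathcal{L}}$ has codimension $2g-1$. The conclusion is that the codimension-one leaves of $\alpha$ lie in proper special subvarieties, hence are algebraic. So the foliation is given by the fibers of a rational map from $M$ to a curve.

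You must still exclude this case. Along each leaf a rescaled $\beta$ is flat, so by Deligne's invariant cycle theorem it spans a sub-Hodge structure of $H^1(J_m,\mathbb{Q})$ that is constant along the leaf. At a very general $m$ this sub-Hodge structure is either $0$, contradicting $\omega\neq 0$, or everything. The latter means the moduli map is constant on the positive-dimensional leaves, contradicting generic finiteness of $M$ over $\mathcal{A}_g$ or $\mathcal{M}_4$. Alternatively, the monodromy of a general fiber is a proper normal subgroup of ${\rm Sp}(2g)$, hence finite.

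With these pieces supplied, your outline becomes the paper's proof.
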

The proofs of Theorem \ref{theoabeliantexte}(i) and Theorem \ref{theoabeliantexte}(ii) are very similar, with some technical differences that are essentially already taken care in Proposition \ref{propcarrenul}(i) and (ii), so we  give them in parallel. When a different argument is needed for (i) and (ii), we will refer to case (i) and case (ii).

In case (i), we consider the moduli space $\mathcal{A}_{g}$ of principally polarized abelian varieties, and consider the universal abelian variety $J_g\rightarrow \mathcal{A}_{g}$ with associated (isomorphic) dual fibration $$\widehat{J}_g:={\rm Pic}^0(J/ \mathcal{A}_{g})\rightarrow \mathcal{A}_{g}.$$
In case (ii), we consider  the moduli space $\mathcal{M}_{4}$ of genus $4$ curves, the universal Jacobian
${J}\rightarrow \mathcal{M}_4$ and its dual $\widehat{J}\rightarrow \mathcal{M}_4$ .

We  start recalling  the following general fact (see \cite{voisinchow}).
\begin{lemm} For any abelian fibration $J\rightarrow B$,  the set of points $(b,d), \, d \in \widehat{J}_{b}= {\rm Pic}^0(J_b)={\rm CH}^1(J_{b})_{\rm hom}$, such that
 $d^2=0$ in ${\rm CH}^2(J_{b})$, is a countable union of closed algebraic subsets of $ \widehat{J}$.
\end{lemm}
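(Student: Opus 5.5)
The set in question is
$$Z:=\{(b,d)\in \widehat{J}:\ d^2=0 \ \text{in}\ {\rm CH}^2(J_b)\},$$
and the plan is to realize it as a countable union of images of proper maps from Hilbert-scheme-type parameter spaces. This is the standard argument used to show that loci of cycles rationally equivalent to zero are countable unions of closed algebraic subsets. We may work with $\mathbb{Q}$-coefficients, since $d^2$ is then torsion-free in the relevant sense. Alternatively, we can first replace $Z$ by the locus where $Nd^2=0$ for some $N$, which is itself a countable union over $N$.

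\textbf{Step 1: a universal cycle.} Over $\widehat{J}$ we have the fiber product $J\times_B\widehat{J}\rightarrow \widehat{J}$ and the Poincar\'e divisor $\mathcal{P}$ on it, after possibly passing to an \'etale or finite base change, which does not affect the conclusion since images of closed sets under finite maps are closed. The self-intersection $\mathcal{P}^2$ is then a codimension $2$ cycle on $J\times_B\widehat{J}$, relative over $\widehat{J}$, whose restriction to the fiber over $(b,d)$ is $d^2\in {\rm CH}^2(J_b)$. So $Z$ is exactly the set of points $t\in T:=\widehat{J}$ at which the fiberwise restriction $\mathcal{Z}_t$ of the cycle $\mathcal{Z}=\mathcal{P}^2$ vanishes in ${\rm CH}^2$.

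\textbf{Step 2: the general principle.} For a smooth projective family $X\rightarrow T$ and a relative cycle $\mathcal{Z}$, the locus $\{t:\mathcal{Z}_t=0 \text{ in } {\rm CH}(X_t)_{\mathbb{Q}}\}$ is a countable union of closed algebraic subsets. To see this, write $\mathcal{Z}=\mathcal{Z}^+-\mathcal{Z}^-$ with effective parts, moving them if needed so that they are flat over a stratification of $T$. Then $\mathcal{Z}_t$ is rationally equivalent to $0$ (up to a multiple $N$) if and only if there exist
\begin{itemize}
\item an integer $N$,
\item effective cycles $W_1,W_2$ on $X_t\times\mathbb{P}^1$ of bounded degree, flat over $\mathbb{P}^1$, and
\item an effective cycle $\Gamma$ on $X_t$,
\end{itemize}
such that
$$W_{1|0}+N\mathcal{Z}^-_t+\Gamma=W_{1|\infty}+N\mathcal{Z}^+_t+\Gamma', \qquad \Gamma'=\ldots$$
We make this precise via the standard formulation: $N\mathcal{Z}_t=\sum_i {\rm div}(f_i)$ on subvarieties, which is encoded by a cycle $W\subset X_t\times\mathbb{P}^1$ with $W_0-W_\infty=N\mathcal{Z}_t$ as cycles. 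For each choice of discrete data (the integer $N$ and the Hilbert polynomials or degrees of $W$ and of the auxiliary effective cycles), the relevant data are parameterized by a relative Chow variety ${\rm Chow}(X\times\mathbb{P}^1/T)$, which is projective over $T$. The equality of cycles $W_0-W_\infty=N\mathcal{Z}_t$ is a closed condition. Its image in $T$ is therefore closed, by properness of relative Chow varieties over $T$. Taking the union over the countably many discrete data gives $Z$.

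\textbf{Main obstacle.} The delicate point is the properness and closedness in Step 2. Closedness requires that limits of rational equivalences remain rational equivalences. This holds because specializations of the cycles $W$ in the proper relative Chow variety still satisfy $W_0-W_\infty=N\mathcal{Z}_t$: specialization of cycles commutes with taking fibers over $0$ and $\infty$, once we use the flat-limit or Fulton specialization of cycles. One must also handle the fact that the condition is given by an algebraic subset of the Chow variety, and not merely a constructible one. I would cite the treatment in \cite{voisinchow} and handle this point as done there. The rest is formal bookkeeping.
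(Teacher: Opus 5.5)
Your reduction is the same as the paper's. The paper gives no argument of its own: it quotes from \cite{voisinchow} the general fact that the locus where a family of cycles vanishes fiberwise in the Chow group is a countable union of closed algebraic subsets. Your Step 1, applying this to $\mathcal{Z}=c_1(\mathcal{P})^2$ on $J\times_B\widehat{J}\rightarrow\widehat{J}$, is exactly the needed reduction.

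The problem is your sketch of the general fact in Step 2. You claim that $W_0-W_\infty=N\mathcal{Z}_t$ is a closed condition on the relative Chow variety, so that properness makes its image closed. This is false. The cycles $W_0$ and $W_\infty$ are only defined when no component of $W$ lies in $X_t\times\{0\}$ or $X_t\times\{\infty\}$. That is an open condition, and limits do acquire such components. For example, the graphs of $f_c=(x-a)/(c(x-b))$ in $\mathbb{P}^1\times\mathbb{P}^1$ satisfy $W_0-W_\infty=\{a\}-\{b\}$ for $c\neq 0$. Their limit as $c\to 0$ is $\{a\}\times\mathbb{P}^1+\mathbb{P}^1\times\{\infty\}$. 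The second component lies in the fiber over $\infty$, and the remaining one gives $0$, not $\{a\}-\{b\}$. So your later assertion that limits of the $W$'s still satisfy $W_0-W_\infty=N\mathcal{Z}_t$ is also wrong at the level of cycles.

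What is true, and what the cited argument does, is the following. Fix the discrete data $d$ and let $R_d$ be the locus of pairs $(W^+,W^-)$ of effective cycles, with no component over $0$ or $\infty$, such that $W^+_0+W^-_\infty+N\mathcal{Z}^-_t=W^+_\infty+W^-_0+N\mathcal{Z}^+_t$. (This is what your incomplete display should say.)
\begin{itemize}
\item $R_d$ is only locally closed, so its image $T_d\subset T$ is merely constructible, and $Z=\bigcup_d T_d$.
\item One then proves $\overline{T_d}\subset Z$. Given $t_0\in\overline{T_d}$, choose a curve $C$ through $t_0$ whose generic point lies in $T_d$. After a finite base change, lift it to $R_d$; this gives $N\mathcal{Z}_\eta=0$ in ${\rm CH}(X_\eta)$ over the function field.
\item Now apply Fulton's specialization map ${\rm CH}(X_\eta)\rightarrow {\rm CH}(X_{t_0})$, which sends $c_1(\mathcal{P}_\eta)^2$ to $c_1(\mathcal{P}_{t_0})^2$. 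The vanishing survives in the Chow group, though not as a cycle identity on the limit.
\end{itemize}
This gives $Z=\bigcup_d\overline{T_d}$.

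Finally, your passage to $\mathbb{Q}$-coefficients (``torsion-free in the relevant sense'') is unjustified as phrased, and it is also unnecessary. The argument runs verbatim with $N=1$, since specialization is defined integrally. This matters because the paper's Bloch--Roitman remark right after the lemma indicates integral coefficients. Read integrally, the locus $\bigcup_N\{Nd^2=0\}$ is a priori a different set from the one in the statement.
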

This set  contains the torsion points in the fibers, by the Bloch-Roitman  theorem on torsion codimension $2$ algebraic cycles (see \cite{bloch}, \cite{roitman}), because the cycles $d^2$ above have trivial Abel-Jacobi invariant. In case (i), it is also known for $g\geq3$ (see \cite{voisinchowab}) that any irreducible algebraic component of this set dominating $\mathcal{A}_{g}$ is generically finite over it. We fix such a dominating component $M$ and denote respectively  by $J_{g,M}\rightarrow M,\,\widehat{J}_{g,M}\rightarrow M$ the fibered products
$J_g\times_{\mathcal{A}_{g}}M,\,\widehat{J}_g\times_{\mathcal{A}_{g}}M$. By construction, there  is a section $M\rightarrow \widehat{J}_{g,M}$, which (possibly after finite base change) gives a divisor $D\in {\rm CH}^1(J_{g,M})$ which is cohomologous to $0$ along the fibers $J_{g,m}$ and satisfies  for any $m\in M$ \begin{eqnarray}
\label{eqdsquarezero} D_m^2=0\,\,{\rm in}\,\, {\rm CH}^2(J_{g,m}).
\end{eqnarray}
The contents of Theorem \ref{theoabeliantexte}(i) is that,  under assumption (\ref{eqdsquarezero}), $D$ is of torsion along the fibers $J_{g,m}$.
In case (ii), we can make exactly the same construction, and  we get this time  a generically finite cover $M\rightarrow \mathcal{M}_4$, and a section $M\rightarrow \widehat{J}_{M}:=\widehat{J}\times_{\mathcal{M}_4}M$, inducing a divisor
$D$ of $J_{M}:=J\times_{\mathcal{M}_4}M$, which is homologous to zero on the fibers $J_{m}$ and satisfies, for any $m\in M$

\begin{eqnarray}
\label{eqdsquarezero(ii)} D_m^2=0\,\,{\rm in}\,\, {\rm CH}^2(J_{m}).
\end{eqnarray}

\begin{proof}[Proof of Theorem \ref{theoabelian}]
With the same notations as above, the cycle $D^2\in {\rm CH}^2(J_{g,M})$, being rationally equivalent to $0$ in the fibers of $J_M\rightarrow M$, has trivial infinitesimal invariant (see Theorem \ref{theomumfordvar}), that is:
\begin{eqnarray}\label{eqvaninint} \delta {D^2}(m)=0\,\,{\rm in}\,H^2(\Omega_{J_{g,M}\mid J_{g,m}}^2)
\end{eqnarray}
for a general $m\in M$.
By compatibility of the cycle class with the intersection and cup-products,  we have
\begin{eqnarray}\label{eqvaninint1} \delta {D^2}(m)=(\delta D(m))^2\,\,{\rm in}\,H^2(\Omega_{J_{g,M}\mid J_{g,m}}^2).
\end{eqnarray}

In case (i), as discussed in the previous section, the infinitesimal invariant $\delta D(m)\in I_{1,W}$ thus satisfies $\overline{M}_{2,W}(\delta D(m))=0$ in $I_{2,W}$.
According to Proposition \ref{propcarrenul}(i), this implies that the infinitesimal invariant
$$\delta D(m)\in (V^*\otimes W_2)/V$$ at a general point $m\in M$ has a unique lift $$\widetilde{\delta {D}(m)}\in V^*\otimes W_2$$
which is of rank $\leq 1$. Note that the case  where the rank is $0$ at the general point is already treated in Lemma  \ref{lepourlatorsion}. When the generic rank is $1$,  we can restrict to the dense Zariski open set of $M$ where this rank is exactly $1$.  In this case, we have:
\begin{claim}\label{claim2904} The normal function $\nu_D\in \Gamma(\mathcal{J}_g^\vee)$ admits  local  lifts
$\tilde{\nu}\in \Gamma(\mathcal{H}^1)$ having the property that, at any  point $m\in M$
\begin{eqnarray}\label{eqdnurang1} (\nabla\tilde{\nu})^{0,1}_m\in H^{0,1}(J_{g,m})\otimes \Omega_{M,m}=V^*\otimes W_2
\end{eqnarray}
is a rank $1$ tensor in $V^*\otimes W_2$.
\end{claim}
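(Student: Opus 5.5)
The plan is to adjust an arbitrary local lift by a holomorphic section of $\mathcal{H}^{1,0}$, so that its $(0,1)$-derivative becomes the distinguished rank $1$ lift given by Proposition \ref{propcarrenul}. First choose any local lift $\tilde{\nu}_0\in\Gamma(\mathcal{H}^1)$ of $\nu_D$ on a small open set $U\subset M$ (Euclidean topology). Then $(\nabla\tilde{\nu}_0)^{0,1}$ is a holomorphic section of $\mathcal{H}^{0,1}\otimes\Omega_M$. Using the polarization, and the fact that $M\to\mathcal{A}_g$ is generically finite, identify it with a holomorphic section $\phi_0$ of the bundle whose fiber at $m$ is $V_m^*\otimes W_{2,m}$, where $V_m=H^{1,0}(J_{g,m})$. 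At each point $m$, the class of $\phi_0(m)$ modulo $\overline{\nabla}(\mathcal{H}^{1,0}_m)=V_m$ is $\delta\nu_D(m)$.

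By Proposition \ref{propcarrenul}(i) and the vanishing (\ref{eqvaninint}), (\ref{eqvaninint1}), there is for each $m$ a unique $v(m)\in V_m$ such that $\phi(m):=\phi_0(m)-\mu_{v(m)}$ has rank $\leq 1$. The key step is to show that $m\mapsto v(m)$ is a holomorphic section of $\mathcal{H}^{1,0}$ over $U$. The rank $\leq 1$ condition is closed and algebraic. The map $V_m\to V_m^*\otimes W_{2,m}$, $v\mapsto\mu_v$, is an injective holomorphic bundle map. So the graph $\{(m,v):\phi_0(m)-\mu_v \text{ has rank}\leq1\}$ is a closed analytic subset of the total space of $\mathcal{H}^{1,0}_U$, and by uniqueness it maps bijectively onto $U$.

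The main obstacle is upgrading this bijective projection to a holomorphic section. I would first try the following. Bijectivity onto the smooth base $U$ gives holomorphy of $v$ on a dense Zariski open subset, for instance by normality, or by generic reducedness of the fibers after shrinking. It is enough to prove the claim there, since we may shrink $M$.

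Alternatively, I would give an explicit formula. Where $\phi(m)$ has rank exactly $1$, write $\phi_0(m)=\mu_{v}+u\otimes w$. For two independent vectors $a,b\in V_m$ lying in $\ker u$, the tensor $\phi_0(m)$ gives $\mu_v(a)=v\cdot a$ and $\mu_v(b)=v\cdot b$. Hence $v$ is recovered linearly from $\phi_0(m)$ once $\ker u$ is known. Locally $\ker u$ varies holomorphically, by the same argument applied to the rank condition, which is polynomial in the data. Alternatively, one can linearize the rank condition via the Zariski tangent space to show that the graph is reduced.

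Once $v$ is holomorphic, set $\eta:=\tilde{\nu}_0-\tilde{\nu}$, with $\eta$ the holomorphic section of $\mathcal{H}^{1,0}$ satisfying $\overline{\nabla}\eta=\mu_v$ under the identification. Then $\tilde{\nu}:=\tilde{\nu}_0-\eta$ is still a local lift of $\nu_D$, since lifts are defined modulo $\mathcal{H}^{1,0}$. Moreover
$$(\nabla\tilde{\nu})^{0,1}=(\nabla\tilde{\nu}_0)^{0,1}-\overline{\nabla}\eta=\phi_0-\mu_v=\phi,$$
which has rank exactly $1$ on the open set where the generic rank is attained. This proves (\ref{eqdnurang1}). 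Case (ii) is identical, with $V_2$ replacing $W_2$ and Proposition \ref{propcarrenul}(ii) replacing (i).
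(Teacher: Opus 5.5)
Your proposal is correct and is essentially the paper's argument: take any local lift $\tilde{\nu}_0$, write $(\nabla\tilde{\nu}_0)^{0,1}=\widetilde{\delta D}+\overline{\nabla}\eta$, where $\widetilde{\delta D}(m)$ is the unique rank $\leq 1$ lift given by Proposition \ref{propcarrenul}, and replace $\tilde{\nu}_0$ by $\tilde{\nu}_0-\eta$. The paper simply takes for granted that $\eta$ (equivalently $m\mapsto\widetilde{\delta D(m)}$) is holomorphic, and your bijective-graph/normality argument supplies this; the graph is in fact proper over $U$, since $\mu_w$ has rank $g$ for $w\neq 0$, so $v$ is continuous and then holomorphic by Riemann extension. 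Your ``explicit formula'' alternative is not self-contained, since knowing $\ker u$ holomorphically is just as hard as knowing $v$, but it is not needed.
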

 In (\ref{eqdnurang1}), $(\nabla\tilde{\nu})^{0,1}_m$ denotes the $(0,1)$-part of $\nabla\tilde{\nu}_m\in H^1(J_{g,m},\mathbb{C})\otimes\Omega_{M,m}$.
\begin{proof}[Proof of Claim \ref{claim2904}]  We  choose a first local lift $\tilde{\nu}_0$ of $\nu_D$.
By definition, $\delta\nu_{D,m}=\delta D(m)$ is obtained by considering $(\nabla\tilde{\nu}_0)^{0,1}_m\in H^{0,1}(J_{g,m})\otimes \Omega_{M,m}$ modulo $\overline{\nabla}H^{1,0}(J_{g,m})$. Thus we have
\begin{eqnarray}\label{eqfut2804} (\nabla\tilde{\nu}_0)^{0,1}=
\widetilde{\delta D}+\overline{\nabla}\eta\,\,{\rm in}\,\,\Gamma(\mathcal{H}^{0,1}\otimes \Omega_M),\end{eqnarray}
where $\eta$ is a section of $\mathcal{H}^{1,0}$ and  $\widetilde{\delta D}$ is the section of  $\mathcal{H}^{0,1}\otimes \Omega_M$ defined by $\widetilde{\delta D}(m):= \widetilde{\delta D(m)}$.
By (\ref{eqfut2804}), the lift  $\tilde{\nu}:=\tilde{\nu}_0-\eta$ then satisfies
\begin{eqnarray}\label{eqfut2804bis} (\nabla\tilde{\nu})^{0,1}= \widetilde{\delta D}\,\,{\rm in}\,\,\Gamma(\mathcal{H}^{0,1}\otimes \Omega_M),\end{eqnarray}
where  $\widetilde{\delta D}$ is a rank $1$  section of $\mathcal{H}^{0,1}\otimes \Omega_M$.
\end{proof}

In case (ii), the infinitesimal invariant $\delta D\in I_{1,V}$  satisfies $\overline{M}_{2,V}(\delta D)=0$ in $I_{2,V}$ and, using Proposition \ref{propcarrenul}(ii) and arguing exactly as above, we get, after shrinking $M$,  that the normal function $\nu_D\in \Gamma(\mathcal{J}^\vee)$ admits  local  lifts
$\tilde{\nu}\in \Gamma(\mathcal{H}^1)$ having the property that, at any  point $m\in M$
\begin{eqnarray}\label{eqdnurang1prime} (\nabla\tilde{\nu})^{0,1}\in H^{0,1}(J_{m})\otimes \Omega_{M,m}=V^*\otimes V_2
\end{eqnarray}
is a rank $1$ tensor in $V^*\otimes V_2$.

We will  prove below  the following result.
\begin{prop}\label{propourcrcutheoab}  Let  $M$ be a smooth quasi-projective variety over $\mathbb{C}$ and let $J\rightarrow M$ be a family of abelian varieties, which is either the universal abelian variety of dimension $g\geq 4$ over $M$, for some generically finite morphism $M\rightarrow \mathcal{A}_g$  (case (i)), or the universal Jacobian over $M$, for some generically finite morphism $M\rightarrow \mathcal{M}_4$ (case (ii)). Let  $D\in {\rm Pic}(J)$ be homologous to $0$ on fibers. Assume that the associated normal function $\nu_D$ has the property that it admits local lifts $\tilde{\nu}\in \Gamma(\mathcal{H}^1)$ such that the $(0,1)$-part
$$(\nabla\tilde{\nu})^{0,1}\in \Gamma(\mathcal{H}^{0,1}\otimes \Omega_M)$$ of $\nabla\tilde{\nu}$ has rank $\leq 1$ at any point of $M$. Then $\nu_D$ admits local lifts $\tilde{\nu}\in \Gamma(\mathcal{H}^1)$ which are  horizontal, that is, $\nabla\tilde\nu=0$.
\end{prop}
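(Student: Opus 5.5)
The plan is to promote the rank-one lift to a global object, use it to produce a codimension-one foliation of $M$ along which $\nu_D$ is locally constant, and then use a functional transcendence statement of Ax--Shanuel type to show that this foliation cannot exist unless $(\nabla\tilde\nu)^{0,1}=0$. The last step is where I expect the real difficulty.

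\emph{Globalizing the lift.} Let $\tilde\nu$ be a local lift as in the statement, with $(\nabla\tilde\nu)^{0,1}$ of rank $\leq 1$. Any other local lift is $\tilde\nu+\eta_\mathbb{Z}+\eta$ with $\eta\in\Gamma(\mathcal{H}^{1,0})$, and its $(0,1)$-derivative differs by $\overline{\nabla}\eta$. At a point $m$ where $\eta(m)=v\neq 0$, the tensor $\overline{\nabla}\eta(m)$ is the multiplication map $\mu_v$, which has rank $g$, resp.\ $4$. Hence, as in the uniqueness part of Proposition \ref{propcarrenul}, two rank $\leq 1$ lifts differ only by a section of $H^1_\mathbb{Z}$. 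Consequently $\Theta:=\nabla\tilde\nu$ is a well-defined single-valued holomorphic section of $\mathcal{H}^1\otimes\Omega_M$ on $M$. It satisfies $\nabla\Theta=0$, and $\Theta^{0,1}$ has rank $\leq 1$. Since $\nu_D$ comes from an algebraic divisor, its lifts have moderate growth at infinity on a smooth compactification with respect to the Deligne extension. Therefore $\Theta$ should be an algebraic section, which is the ``distinguished algebraic closed decomposable form'' mentioned in the introduction. If $\Theta^{0,1}=0$ identically, then $\nabla\Theta=0$ gives $\overline{\nabla}_1(\Theta)=0$. Injectivity of $\overline{\nabla}_1$, as in Lemma \ref{lepourlatorsion}(ii) (this injectivity is to be checked for $\mathcal{A}_g$ and for $\mathcal{M}_4$), then yields $\Theta=0$, which is the conclusion.

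\emph{Constructing the foliation.} Suppose instead that $\Theta^{0,1}$ has rank exactly $1$ on a dense open set. Locally write $\Theta^{0,1}=\alpha\otimes\beta$ with $\alpha\in\Gamma(\mathcal{H}^{0,1})$ and $\beta\in\Omega_M$, both nowhere zero. The line spanned by $\beta$ is canonical, so $\mathcal{F}:=\ker\beta\subset T_M$ is an algebraic distribution of corank $1$. I would first prove that $\mathcal{F}$ is integrable. To do this, take the $(0,2)$-component of $\nabla\Theta=0$. It reads $\overline{\nabla}_1(\Theta^{1,0})+(\nabla^{0,1}\alpha)\wedge\beta+\alpha\otimes d\beta=0$. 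Restricting to $\wedge^2\mathcal{F}$ gives $\overline{\nabla}_1(\Theta^{1,0}|_{\mathcal F})+\alpha\otimes d\beta|_{\mathcal F}=0$. The image of $\overline{\nabla}_1$ on such tensors should avoid decomposable terms $\alpha\otimes\omega$, by a rank argument similar to Lemma \ref{le1pourcar0}. If that holds, we get $d\beta|_{\mathcal F}=0$, which is integrability, together with $\overline{\nabla}_1(\Theta^{1,0}|_{\mathcal F})=0$. Now let $F$ be a leaf. Restricted to $F$, the section $\Theta|_F=\nabla(\tilde\nu|_F)$ has no $(0,1)$-part. The argument of Lemma \ref{lepourlatorsion} applies verbatim, using injectivity of $\overline{\nabla}_1$ restricted to the hyperplane $\mathcal{F}_m\subset T_{M,m}$; this still holds because $\dim M\geq 9$ is large. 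It gives $\nabla(\tilde\nu|_F)=0$. Thus every leaf $F$ is a codimension-one analytic subvariety of $M$ along which the lift of the normal function is flat.

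\emph{Excluding the foliation.} The main obstacle is to show that such leaves cannot exist. Consider the mixed period map attached to the extension defined by $\nu_D$; equivalently, consider the pair (period point, $\tilde\nu$) in the universal family over the Siegel space. The graph of each leaf $F$ is contained in the locus where the ``translation coordinate'' $\tilde\nu$ is a flat section, which is a bi-algebraic condition in the mixed Shimura variety $\widehat J_g$. I would apply the Ax--Schanuel theorem for the universal abelian variety, in the mixed version due to Gao and to Bakker--Tsimerman, as used in \cite{nagloo}, \cite{bakker}, \cite{baldi}. It should say that the Zariski closure of the image of $F$ in $\widehat J_{g}$ is weakly special, and that $\nu_D$ is torsion or constant over it. Then $F$ must be an open subset of an algebraic hypersurface of $M$ whose image in $\mathcal{A}_g$, resp.\ $\mathcal{M}_4$, is contained in a proper weakly special subvariety of small codimension. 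For $g\geq 4$ there are no codimension-one weakly special subvarieties of $\mathcal{A}_g$. In case (ii), the Torelli image of $\mathcal{M}_4$ is Hodge generic of dimension $9$ in $\mathcal{A}_4$, and its intersections with proper weakly special subvarieties have codimension $>1$. Either way this is a contradiction, so $\beta=0$, and the first step concludes. The delicate points are the precise form of Ax--Schanuel needed for a single foliation leaf, and the codimension estimate in the Jacobian case; this is presumably what Proposition \ref{lecructresgenpourpropourcrcutheoab} is designed to supply.
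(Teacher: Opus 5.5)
Your first two steps match the paper. Uniqueness of the rank $\leq 1$ lift modulo $H^1_{\mathbb{Z}}$ is Lemma \ref{leuniqueglobal}(i). The fact you leave as ``should'' is exactly Claim \ref{claim1404}: on $\ker\beta$, $\overline{\nabla}_1$ is injective and its image meets the tensors $\alpha\otimes\omega$ only in $0$. The paper proves it with the Hopf lemma, not with an argument like Lemma \ref{le1pourcar0}. It gives $\Theta_{\mid\mathcal{F}}=0$ and $\beta\wedge d\beta=0$, which is Lemma \ref{lepourproppourcruc}.

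Your last step takes a genuinely different route.
\begin{itemize}
\item \emph{Your route.} You feed the normal function itself into Gao's mixed Ax--Schanuel for the universal abelian variety $\mathfrak{A}_g$. Along a leaf $F$ the lift $\tilde\nu\equiv c$ is constant. So the lifted mixed period map of $F$ lands in $W_0=\{(x,c \bmod F^1_x)\}\subset\mathcal{X}^\vee$, which is algebraic of codimension $g$. With $W=W_0\times\overline{\nu(M)}$, the expected dimension is $\dim M-g$, which is less than $\dim F=\dim M-1$ as soon as $g\geq 2$. So the leaf is atypical.
\item \emph{The paper's route.} It applies the differential Ax--Schanuel of \cite{nagloo} on the symplectic frame bundle to the subvariety defined by the line spanned by the $\mathcal{H}^1$-factor of $\Theta$. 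This needs $\Theta$ to be algebraic (Lemma \ref{lealgbetaomega}, via the universal vector extension). It yields algebraic integrability of the foliation. It then finishes with Deligne's invariant cycle theorem, simplicity of the very general Hodge structure, and the fact that the moduli map does not factor through a curve.
\end{itemize}
What each buys: your route makes the algebraicity of $\Theta$ unnecessary, so your moderate-growth remark can be dropped, and it bypasses the invariant-cycle argument. In exchange it needs Gao's mixed theorem and facts about weakly special subvarieties. No proper weakly special subvariety of $\mathcal{A}_g$ has codimension $\leq 1$ for $g\geq 3$. Those of $\mathcal{A}_4$ have dimension $\leq 7$. The proper weakly special subvarieties of $\mathfrak{A}_g$ dominating $\mathcal{A}_g$ are torsion multisections. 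In particular your route cannot reach $g=2$, where Proposition \ref{lecructresgenpourpropourcrcutheoab} still holds using only Zariski density of the monodromy.

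Two points in your last step need tightening.
\begin{itemize}
\item Ax--Schanuel only says that $\nu(F)$ lies in some proper weakly special $Z\subset\mathfrak{A}_g$. It does not say that the Zariski closure of $\nu(F)$ is weakly special, nor that $F$ is algebraic.
\item You must therefore split into cases. If $Z$ does not dominate $\mathcal{A}_g$, then its image is a weakly special subvariety containing the image of $F$. That image has dimension $\dim\mathcal{A}_g-1$ in case (i) and $8$ in case (ii), so the bounds above give the contradiction. If $Z$ dominates $\mathcal{A}_g$, it is a torsion multisection, since the generic fibre has endomorphism ring $\mathbb{Z}$ and there is no isotrivial part. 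Then the restriction of $\nu_D$ to every leaf is torsion. A Baire argument on the closed sets $\{N\nu_D=0\}$ gives $N\nu_D=0$ on $M$. The torsion lift is then flat, and the uniqueness of your first step forces $\Theta=0$, contradicting rank exactly $1$.
\end{itemize}
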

  Proposition \ref{propourcrcutheoab} and
 Lemma \ref{lepourlatorsion}  then   imply  that $\nu_D$ is of torsion, which concludes the proof of Theorem \ref{theoabeliantexte}.
\end{proof}

\begin{proof}[Proof of Proposition \ref{propourcrcutheoab}]
We argue by contradiction and assume furthermore that  over a dense  Zariski open set of $M$, $\nu_D$  admits local lifts $\tilde{\nu}\in \Gamma(\mathcal{H}^1)$ such that the $(0,1)$-part
$$(\nabla\tilde{\nu})^{0,1}\in \Gamma(\mathcal{H}^{0,1}\otimes \Omega_M)$$ of $\nabla\tilde{\nu}$ has rank exactly $1$.
\begin{lemm} \label{lepourproppourcruc}   The following hold.

(i) The normal function $\nu_D$ admits local  lifts
$\tilde{\nu}\in \Gamma(\mathcal{H}^1)$ with the property that
\begin{eqnarray}\label{eqnablatildenurank1} \nabla{\tilde{\nu}}=\beta\otimes \alpha\,\,{\rm in}\,\,\Gamma(\mathcal{H}^1\otimes \Omega_M),
\end{eqnarray}
where $\beta\in \Gamma(\mathcal{H}^1)$, $\alpha\in\Gamma(\Omega_M)$.

(ii)
Furthermore,  the $1$-form $\alpha$ defines an integrable distribution.
\end{lemm}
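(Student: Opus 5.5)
The plan is to start from the lifts of Claim \ref{claim2904}, extend the rank-one $(0,1)$-part to a rank-one decomposition of the whole of $\nabla\tilde\nu$, and get both statements from the flatness relation $\nabla\circ\nabla=0$ together with an injectivity statement for the second Griffiths differential taken modulo $\alpha$.

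\textbf{Setting up.} By Claim \ref{claim2904} (resp. its analogue (\ref{eqdnurang1prime}) in case (ii)), after shrinking $M$ we have local lifts $\tilde\nu$ with $(\nabla\tilde\nu)^{0,1}=\bar\beta\otimes\alpha$. Here $\bar\beta$ is a nowhere vanishing section of $\mathcal{H}^{0,1}$ and $\alpha$ is a nowhere vanishing $1$-form on $M$. Such a local factorization of a rank-exactly-one tensor exists and is unique up to rescaling $\bar\beta\mapsto f\bar\beta$, $\alpha\mapsto f^{-1}\alpha$. Choose any local lift $\beta_0\in\Gamma(\mathcal{H}^1)$ of $\bar\beta$. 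Then
$$\gamma:=\nabla\tilde\nu-\beta_0\otimes\alpha$$
is a section of $\mathcal{H}^{1,0}\otimes\Omega_M$. Replacing $\beta_0$ by $\beta=\beta_0+b$ with $b\in\Gamma(\mathcal{H}^{1,0})$ replaces $\gamma$ by $\gamma-b\otimes\alpha$. So (i) amounts to showing that
$$\gamma\equiv 0 \ \ {\rm in}\ \ \mathcal{H}^{1,0}\otimes(\Omega_M/\mathcal{O}_M\alpha),$$
since then $\gamma=b\otimes\alpha$ for some $b$, and we take $\beta:=\beta_0+b$. Statement (ii) is the condition $d\alpha\wedge\alpha=0$, that is, $d\alpha\equiv0$ modulo $\alpha$.

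\textbf{Using $\nabla\circ\nabla=0$.} Apply $\nabla$ to $\nabla\tilde\nu=\beta_0\otimes\alpha+\gamma$ and project to $\mathcal{H}^{0,1}\otimes\Omega^2_M$. For this projection, use that $\mathcal{H}^{0,1}=\mathcal{H}^1/\mathcal{H}^{1,0}$, and that by Griffiths transversality the composite $\mathcal{H}^{1,0}\to\mathcal{H}^1\otimes\Omega_M\to\mathcal{H}^{0,1}\otimes\Omega_M$ is $\overline\nabla$, which is $\mathcal{O}$-linear. This gives
$$\overline\nabla_1(\gamma)+\bar\beta\otimes d\alpha-(\nabla\beta_0)^{0,1}\wedge\alpha=0 \ \ {\rm in}\ \ \mathcal{H}^{0,1}\otimes\Omega^2_M.$$
Now reduce modulo $\alpha$, that is, pass to $\mathcal{H}^{0,1}\otimes\bigwedge^2(\Omega_M/\alpha)$. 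The last term disappears, and we obtain
$$\overline\nabla_{1,\alpha}(\bar\gamma)=-\bar\beta\otimes\overline{d\alpha}, \qquad (\ast)$$
where $\bar\gamma\in\mathcal{H}^{1,0}\otimes(\Omega_M/\alpha)$ is the image of $\gamma$ and $\overline\nabla_{1,\alpha}$ is the map induced by $\overline\nabla_1$ modulo $\alpha$. Pointwise this is a purely algebraic statement. In case (i), $V=H^{1,0}$, $\Omega_{M,m}=W_2={\rm Sym}^2V$, and $\overline\nabla$ is the multiplication map $V\to V^*\otimes W_2$. We fix a nonzero $\alpha\in W_2$ and set $W':=W_2/\alpha$. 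Then $(\ast)$ says that
$$\overline\nabla_{1,\alpha}(\bar\gamma)\in V^*\otimes\bigwedge^2W' \ \ \text{has rank}\leq1 \ \text{in the } V^* \text{ factor}.$$

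\textbf{Key algebraic lemma (main obstacle).} I would prove: for $g\geq4$ and $0\neq\alpha\in W_2$, if $\bar\gamma\in V\otimes W'$ satisfies $\overline\nabla_{1,\alpha}(\bar\gamma)=\bar\beta\otimes\omega$ with $\bar\beta\in V^*$ and $\omega\in\bigwedge^2W'$, then $\bar\gamma=0$ and $\omega=0$. Both conclusions follow at once: $\bar\gamma=0$ gives (i), and $\omega=\overline{d\alpha}=0$ gives $\alpha\wedge d\alpha=0$, which is (ii). The same statement with $V_2$ and $q$ is needed in case (ii).

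For the proof, I would first try the Grassmannian restriction technique of Proposition \ref{propcarrenul}. Restrict to $2$-planes $S\subset V$, say those with $\alpha\notin\ker({\rm Sym}^2V\to{\rm Sym}^2E)$, where $E=V/S$. By Lemma \ref{leverni}, the multiplication part dies after restriction. After restriction, the identity becomes a rank-one statement in $\mathcal{S}^*\otimes\bigwedge^2({\rm Sym}^2\mathcal{E}/\alpha)$, and a line bundle argument as in Lemma \ref{le2pourcar0} forces $\omega$ to vanish on these restrictions. Since such restrictions are jointly injective, as in Lemma \ref{le1pourcar0}, this gives $\omega=0$. After that, $\overline\nabla_{1,\alpha}(\bar\gamma)=0$. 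I would then show that $\overline\nabla_{1,\alpha}$ is injective by a Koszul-type computation: an element $\sum v_i\otimes w_i$ whose products $\sum\mu_{v_i}\wedge w_i$ vanish modulo $\alpha$ must lie in $V\otimes\langle\alpha\rangle$. This uses that ${\rm Sym}^2V$ has no linear syzygies of low rank for $g\geq4$.

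I expect this injectivity, especially in case (ii), where one also quotients by $q$ and the dimension count is tight ($\dim V=4$, $\dim V_2=9$), to be the hardest step. If the direct Koszul argument is awkward there, I would fall back on the Grassmannian restriction over the open set $G^0$, using the vanishing $H^1(G^0,\cdot)$ from codimension $\geq3$, exactly as in Lemma \ref{le2pourcar0prime}.
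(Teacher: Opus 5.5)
Your setup, the reduction to $(\ast)$, and Step 2 (injectivity of $\overline\nabla_{1,\alpha}$ via the Hopf lemma) are all correct. Your key algebraic lemma is also true. The gap is in how you propose to prove its first half, namely $\omega=\overline{d\alpha}=0$ via Grassmannian restrictions.

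After restricting to a $2$-plane $S\subset V$, the left side of $(\ast)$ dies by Lemma \ref{leverni}. All you learn is that $\omega$ maps to zero in $\bigwedge^2({\rm Sym}^2E/\bar\alpha)$, where $E=V/S$. These restriction maps are not jointly injective on $\bigwedge^2(W_2/\alpha)$, and Lemma \ref{le1pourcar0} does not help: it concerns $r_1$ on $(V^*\otimes W_2)/V$, not $\bigwedge^2$.

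Here is a concrete counterexample for $g=4$. Take $x,y,z\in V$ independent, $\alpha=x^2$ and $\omega=xy\wedge xz$. For every $S$, the images of $x^2,xy,xz$ in ${\rm Sym}^2E$ lie in $\bar x\cdot E$, which has dimension $\leq 2$ and contains $\bar\alpha$. So $\omega$ restricts to $0$ for every $S$, yet $\omega\neq 0$ in $\bigwedge^2(W_2/\alpha)$. You have no control over $\alpha$, so this case cannot be excluded.

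Case (ii) is worse. There ${\rm Sym}^2E/\langle \bar q,\bar\alpha\rangle$ is generically a line, so its $\bigwedge^2$ vanishes and the restriction carries no information at all. Without $\omega=0$, your Step 2 cannot be applied.

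The missing idea, which is the paper's argument, is not to kill $\omega$ first. Instead, make the right-hand side of $(\ast)$ disappear by evaluating only on the hyperplane $V_0:=\ker\bar\beta\subset V$.

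1. Write $\bar\gamma=\sum_{i=1}^r v_i\otimes w_i$ with the $v_i$ independent in $V$ and the $w_i$ independent in $W_2/\alpha$.
2. Then $(\ast)$ gives $\sum_i uv_i\wedge w_i=0$ in $\bigwedge^2(W_2/\alpha)$ for all $u\in V_0$.
3. Hence $V_0\cdot\langle v_1,\ldots,v_r\rangle\subset\langle \alpha,\tilde w_1,\ldots,\tilde w_r\rangle$, a space of dimension $r+1$.
4. The Hopf lemma gives $\dim V_0\cdot\langle v_1,\ldots,v_r\rangle\geq g+r-2\geq r+2$ if $r>0$, so $r=0$.

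The same argument works in case (ii), because the nondegenerate $q$ is not a product of two linear forms. This is exactly where $g\geq 4$ is used; your injectivity argument on all of $V$ would only need $g\geq 3$.

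Once $\bar\gamma=0$, you get (i). Then $\bar\beta\otimes\overline{d\alpha}=0$ with $\bar\beta\neq 0$ gives $\alpha\wedge d\alpha=0$, which is (ii). The paper obtains (ii) equivalently, by differentiating $\nabla\tilde\nu=\beta\otimes\alpha$.
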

\begin{rema}{\rm   Our  assumption is that, near a general point of $M$, we can choose  local lifts $\tilde{\nu}_0$ satisfying the property that  the $(0,1)$-part $(\nabla\tilde{\nu}_0)^{0,1}\in \Gamma(\mathcal{H}^{0,1}\otimes\Omega_M)$ has rank $1$, hence is locally decomposable as in (\ref{eqnablatildenurank1}). Statement (i) is that we can achieve this local  decomposability for the full differential  $\nabla\tilde{\nu}$ for an adequate choice of $\tilde{\nu}$.}
\end{rema}
\begin{proof}[Proof of Lemma \ref{lepourproppourcruc}]  By assumption, near a general point of $M$, we can choose  local lifts $\tilde{\nu}_0$  such that
\begin{eqnarray}\label{eqnablatildenurank1approx}\nabla\tilde{\nu}_0=\beta_0\otimes \alpha+\beta',
\end{eqnarray}
Where $\beta_0\in\Gamma(\mathcal{H}^1)$, $\alpha$ is a holomorphic $1$-form,  and $\beta'\in \Gamma(\mathcal{H}^{1,0}\otimes \Omega_M)$ is a local section.
Differentiating (\ref{eqnablatildenurank1approx}) and using $\nabla(\nabla\tilde{\nu}_0)=0$, we get the local equality
\begin{eqnarray}\label{eqnablatildenurank1approx2}\nabla\beta'=-\nabla\beta_0\wedge\alpha-\beta_0\otimes d\alpha\,\,{\rm in}\,\, \Gamma(\mathcal{H}^1\otimes\Omega_M^2).
\end{eqnarray}
Projecting (\ref{eqnablatildenurank1approx2}) in $\Gamma(\mathcal{H}^{0,1}\otimes\Omega_M^2)$ gives

\begin{eqnarray}\label{eqnablatildenurank1approx3}\overline{\nabla}_1(\beta')=
-(\nabla\beta_0)^{0,1}\wedge\alpha-\beta_0^{0,1}\otimes d\alpha\,\,{\rm in}\,\, \Gamma(\mathcal{H}^{0,1}\otimes\Omega_M^2),
\end{eqnarray}
where we recall that
\begin{eqnarray}\label{eqnotde165} \overline{\nabla}_1:\mathcal{H}^{1,0}\otimes\Omega_M\rightarrow \mathcal{H}^{0,1}\otimes\Omega_M^2 \end{eqnarray} is induced by $\overline{\nabla}:\mathcal{H}^{1,0}\rightarrow \mathcal{H}^{0,1}\otimes\Omega_M$ and is defined
by $\overline{\nabla}_1(\eta\otimes \gamma)=\overline{\nabla}\eta\wedge\gamma$.
We now make the following
\begin{claim} \label{claim1404} Near  a general point of $M$, Equation (\ref{eqnablatildenurank1approx3}) implies that
$\beta'=\beta''\otimes \alpha$ for some local section $\beta''$ of $\mathcal{H}^{1,0}$.
\end{claim}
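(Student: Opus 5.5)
The plan is to prove Claim \ref{claim1404} pointwise at a general point $m\in M$, by restricting everything to the hyperplane $K:=\ker\alpha_m\subset T_{M,m}$. The conclusion $\beta'=\beta''\otimes\alpha$ is equivalent to $\beta'_{\mid K}=0$ in $\mathcal{H}^{1,0}_m\otimes K^*$. Here $\alpha\neq 0$ near a general point, since the rank is exactly $1$.

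\textbf{Step 1: reduction to an equation on $K$.} Restrict (\ref{eqnablatildenurank1approx3}) to $\bigwedge^2K$. The term $(\nabla\beta_0)^{0,1}\wedge\alpha$ vanishes on $\bigwedge^2K$, so we obtain
$$\overline{\nabla}_1^K(\beta'_{\mid K})=-\beta_0^{0,1}\otimes d\alpha_{\mid K}\,\,{\rm in}\,\,V^*\otimes \bigwedge^2K^*.$$
Here $V=H^{1,0}(J_m)$, and $\overline{\nabla}_1^K: V\otimes K^*\rightarrow V^*\otimes\bigwedge^2K^*$ is the second Griffiths map of the family restricted to the directions in $K$. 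Concretely, write $\beta'_{\mid K}=\sum_i v_i\otimes t_i$ with $(v_i)$ a basis of $V$ and $t_i\in K^*$. Then, for each $u\in V$ (viewed as a linear form on $V^*$), the $u$-component of the left-hand side is
$$\sum_i (v_iu)_{\mid K}\wedge t_i\in \bigwedge^2K^*,$$
where $v_iu\in W_2=T^*_{M,m}$ in case (i), resp. $v_iu\in V_2$ in case (ii). The right-hand side is a tensor $b\otimes\omega$ with $b:=-\beta_0^{0,1}(m)\in V^*$ and $\omega:=d\alpha_{\mid K}$.

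\textbf{Step 2: linear algebra (the main obstacle).} We must show that $\sum_i v_i\otimes t_i$, with $\sum_i (v_iu)_{\mid K}\wedge t_i=b(u)\,\omega$ for all $u$, forces all $t_i=0$, for \emph{any} hyperplane $K$, since $\alpha$ is not generic. First take $u\in\ker b$, a space of dimension $\geq g-1\geq 3$. For such $u$ we get $\sum_i (v_iu)_{\mid K}\wedge t_i=0$.

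The elements $v_iu$ are independent in $W_2$, and for most $u$ they stay independent modulo the line $K^\perp\subset W_2$. This fails only if $K^\perp$ is spanned by a product $uv$, a closed condition that we avoid by choosing $u$ generic in $\ker b$. Cartan's lemma then gives $t_i\in{\rm span}\{(v_ju)_{\mid K}\}_j=(uV)_{\mid K}$ for each such $u$, with a symmetric coefficient matrix $t_i=\sum_j c_{ij}(u)(v_ju)_{\mid K}$.

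Next compare two, or three, generic $u,u'\in\ker b$. In ${\rm Sym}^2V$ we have $uV\cap u'V=\mathbb{C}uu'$, so modulo $K^\perp$ the intersection $(uV)_{\mid K}\cap (u'V)_{\mid K}$ has dimension at most $2$. Combined with the symmetry of the $c_{ij}$, this should force the linear map $v\mapsto t(v)$ to be of the form $v\mapsto (vw)_{\mid K}$ for a fixed $w$ with $w\in uV\cap u'V\cap u''V$ up to $K^\perp$. Using three independent $u$'s (possible as $g-1\geq 3$), this gives $w=0$. I expect the bookkeeping with the extra line $K^\perp$ to be the delicate point; this is where $g\geq4$ is used.

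\textbf{Step 3: conclusion and case (ii).} Once all $t_i=0$, the left-hand side vanishes, so $\beta'_{\mid K}=0$, which is the claim. As a by-product, $\beta_0^{0,1}\otimes d\alpha_{\mid K}=0$. Since $\beta_0^{0,1}\neq 0$ (by the rank-$1$ assumption), this gives $d\alpha\wedge\alpha=0$, which is part (ii) of Lemma \ref{lepourproppourcruc}.

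In case (ii) the same argument runs in $V_2={\rm Sym}^2V/q$, where $K^\perp$ is a line in $V_2$. One replaces $uV\cap u'V=\mathbb{C}uu'$ by the corresponding statement modulo $q$. This holds for generic $u,u'$ because $q$ is nondegenerate of rank $4$, hence not of the form $uv_1+u'v_2$ for generic $u,u'$. The injectivity of multiplication $V\to V_2$ by $u$ was already noted after Proposition \ref{propcarrenul}.
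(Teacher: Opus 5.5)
Your Step 1 is correct and is the paper's own reduction. Restricting to $\bigwedge^2K$ amounts to working in $\bigwedge^2(W_2/\langle\alpha\rangle)$, and taking $u\in\ker b$ is the paper's passage to the hyperplane $V_0=\ker v_0^*$.

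The gap is Step 2, which carries the entire content of the claim and is not proved. After the correct application of Cartan's lemma, you only say that comparing several $u$'s ``should force'' the conclusion. The intermediate form you predict is also not what the equations give. The map $v\mapsto (vw)_{\mid K}$ with $w\in V$ would have rank $g$ for $w\neq 0$, while the condition $w\in uV\cap u'V\cap u''V$ treats $w$ as a quadric. You also leave open the case where $(uV)_{\mid K}\cap(u'V)_{\mid K}$ is $2$-dimensional, i.e.\ $\alpha\in uV+u'V$. In case (ii) you must work modulo the pencil $\langle q,\tilde\alpha\rangle$, not only modulo $q$, so your remark about $q$ alone does not suffice.

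Your route can be closed as follows.
\begin{itemize}
\item Genericity: for generic $u,u'\in\ker b$ one has $\alpha\notin uV+u'V$. Otherwise $\alpha$, viewed as a quadratic form on $V^*$, would vanish on a general codimension-$2$ subspace containing $b$. Since $g\geq 4$, it would then vanish on every vector, so $\alpha=0$.
\item Cartan: it gives $t(v)=\overline{u\,C_u(v)}=\overline{u'\,C_{u'}(v)}$, with $C_u$ and $C_{u'}$ symmetric in the basis $(v_i)$. The difference $u\,C_u(v)-u'\,C_{u'}(v)$ lies in $\langle\alpha\rangle\cap(uV+u'V)=0$.
\item Unique factorization: it then gives $C_u(v)=\ell(v)u'$ and $C_{u'}(v)=\ell(v)u$.
\item Symmetry: since both matrices are symmetric, the coordinate vector of $\ell$ is proportional both to that of $u'$ and to that of $u$. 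Hence $\ell=0$ and $t=0$.
\end{itemize}
Case (ii) is the same with $\langle q,\tilde\alpha\rangle$ in place of $\langle\alpha\rangle$. A dimension count, using that the pencil contains the nondegenerate $q$, shows that the pencil meets $uV$ and $uV+u'V$ trivially for generic choices.

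The paper's argument is shorter and needs no genericity. It writes $\beta'_{\mid K}=\sum_{i\leq r}v_i\otimes w_i$ with $r$ minimal, so the $w_i$ are independent. It then applies Cartan with respect to the $w_i$ rather than the $\overline{uv_i}$, obtaining $V_0\cdot\langle v_1,\ldots,v_r\rangle\subset\langle\alpha,\tilde w_1,\ldots,\tilde w_r\rangle$. Hopf's lemma gives $g+r-2\leq r+1$, which forces $r=0$ when $g\geq 4$. This works uniformly in cases (i) and (ii), using only that multiplication $V\times V\to W_2$ (resp.\ $V_2$) vanishes on no pair of nonzero vectors.
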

Assuming the claim, (\ref{eqnablatildenurank1approx}) now becomes
\begin{eqnarray}\label{eqnablatildenurank1approx5}\nabla\tilde{\nu}_0=(\beta_0+\beta'')\otimes \alpha\,\,{\rm in}\,\, \Gamma(\mathcal{H}^1\otimes\Omega_M),
\end{eqnarray}
which proves Lemma \ref{lepourproppourcruc}(i), with $\beta=\beta_0+\beta''$. Finally we observe that by differentiating (\ref{eqnablatildenurank1}), we get
\begin{eqnarray}\label{eqnablatildenurank1derive} 0=\beta\otimes d\alpha+ \nabla\beta\wedge \alpha\,\,{\rm in}\,\,\Gamma(\mathcal{H}^1\otimes \Omega_M^2),
\end{eqnarray}
As $\beta\not=0$, this implies that $\alpha\wedge d\alpha=0$, so that the distribution defined by $\alpha$ is integrable. This proves Lemma \ref{lepourproppourcruc}(ii) assuming Claim \ref{claim1404}.
\end{proof}

\begin{proof}[Proof of Claim \ref{claim1404}]  We recall that  $M$ is a generically finite cover of either $\mathcal{A}_g$, with $g\geq 4$, (Case (i)) or $\mathcal{M}_4$ (case (ii)). We restrict to the dense Zariski open set of $M$ where $M$ is \'{e}tale over  $\mathcal{A}_g$ or $\mathcal{M}_4$. In case (i), keeping  the same notation as before, so that in particular $V=H^{1,0}(A)$ for $[A]\in\mathcal{A}_g$,  the map $\overline{\nabla}$ at the point $[A]$ is the multiplication map
$$\mu_W:  V\rightarrow  {\rm Hom}(V,W_2),$$
with $W_2:={\rm Sym}^2V$.
In case (ii), with the notation $V=H^{1,0}(C)$ for $[C]\in  \mathcal{M}_4$ a general curve, the
 map $\overline{\nabla}$ at the point $[C]$ is the multiplication map
$$\mu_V:  V\rightarrow  {\rm Hom}(V,V_2),$$
with $V_2=H^0(C,2K_C)=W_2/q$, the quadratic form $q$ being  nondegenerate.

In case (i), let $\mu_{W,1}: V\otimes W_2\rightarrow V^*\otimes\bigwedge^2W_2$ be defined by
$$\mu_{W,1}(\omega):=\mu_W(\beta) \wedge \gamma\in V^*\otimes \bigwedge^2W_2$$ for $\omega=\beta\otimes \gamma\in V\otimes W_2$ (this is the map $\overline{\nabla}_1$ of (\ref{eqnotde165})). We have to  show that for
any element
$ \omega\in V\otimes W_2$ such that
\begin{eqnarray}\label{eqpourmuWdu1404} \mu_{W,1}(\omega)=v_0^*\otimes \alpha_2+ \omega'\wedge  \alpha,\end{eqnarray}
for some  $v_0^*\in V^*,\,\alpha_2\in \bigwedge^2W_2$, $\omega'\in V^*\otimes W_2$, and $\alpha\in W_2$, one has
$\omega=v\otimes \alpha$ for some $v\in V$.
Let $$\overline{W}_2:=W_2/\langle \alpha\rangle.$$ Let $V_0\subset V$ be the hyperplane defined by $v_0^*$. Equation (\ref{eqpourmuWdu1404}) is equivalent to the vanishing of
$\mu_{W,1}(\omega)\in V^*\otimes \bigwedge^2W_2$ in the quotient ${\rm Hom}(V_0,\bigwedge^2\overline{W}_2)$. Denoting by $\overline{\omega}\in V\otimes \overline{W}_2$  the projection of $\omega\in V\otimes W_2$, we have  to prove that $\overline{\omega}=0$.

We write $\overline{\omega}=\sum_{i=1}^rv_i\otimes w_i$, with $w_i$ independent in $\overline{W}_2$, and $v_i$ independent in $V$, for some  $r\leq {\rm dim}\, V$.
Equation (\ref{eqpourmuWdu1404}) tells that, for any $u\in V_0$,
\begin{eqnarray}\label{eqdu15avril}  \sum_{i=1}^r uv_i\wedge  w_i=0\,\,{\rm in}\,\,\bigwedge^2 \overline{W}_2.
\end{eqnarray}

As the $w_i$  are independent in $\overline{W}_2$,  (\ref{eqdu15avril}) implies that $$\overline{uv_i}\in \langle {w}_i,\,i\geq 1\rangle\subset \overline{W}_2,$$ where $\overline{uv_i}$ denotes the projection of $uv_i$ in $\overline{W}_2$. Equivalently, choosing lifts $\tilde{w}_i\in W_2$ of $w_i$, we have
\begin{eqnarray}\label{eqdu15avrilbis}uv_i\in  \langle  \alpha,\tilde{w}_i,\,i\geq 1\rangle\subset {W}_2,\,\,\forall u\in V_0.
\end{eqnarray}
Condition (\ref{eqdu15avrilbis}) says in particular that the image of $V_0\otimes \langle v_i\rangle$ in ${W}_2={\rm Sym}^2V$ is contained in the space $\langle  \alpha,\tilde{w}_i,\,i\geq 1\rangle$ which has dimension $r+1$. As ${\rm dim}\,V_0=g-1$ and $g\geq 4$, we easily get a contradiction unless $r=0$ (see below for the precise argument).

In case (ii), we have similarly  to   show that for
any element
$ \omega\in V\otimes V_2$ such that
\begin{eqnarray}\label{eqpourmuWdu1404bis} \mu_{V,1}(\omega)=v_0^*\otimes \alpha_2+ \omega'\wedge  \alpha,\end{eqnarray}
for some  $v_0^*\in V^*,\,\alpha_2\in \bigwedge^2V_2$, $\omega'\in V^*\otimes V_2$, and $\alpha\in V_2$, one has
$\omega=v\otimes \alpha$ for some $v\in V$. We argue  as before, write $\overline{V}_2:=V_2/\alpha$, and $$\overline{\omega}=\sum_{i=1}^r v_i\otimes {w}_i\,\,{\rm in}\,\,V\otimes \overline{V}_2,$$
with $r$ minimal. The vanishing of $\mu_{V,1}(\omega)$ in $V_0^*\otimes \bigwedge^2(V_2/\alpha)$ then gives as above  that, choosing lifts $\tilde{w}_i\in V_2$ of $w_i\in V_2/\alpha$,
$$  uv_i\in  \langle  \alpha,\tilde{w}_i,\,i\geq 1\rangle\subset {V}_2,\,\,\forall u\in V_0.$$
We then  see that it suffices  to prove the following statement:

\vspace{0.5cm}

(*) {\it Let  $V$ be of vector space of  dimension $4$ and $V_2={\rm Sym}^2V/q$, for some nondegenerate quadratic form $q$. Let $V_0\subset V$ be a hyperplane, and let
$$ H_r\subset  V,\,H_{r+1}\subset V_2$$ be  vector subspaces of respective  dimensions $r$,  $r+1$. Then, if
$$V_0\cdot H_r\subset  H_{r+1},$$ one has $r=0$.}

Indeed, we apply statement (*) to $H_r=\langle v_i\rangle\subset V,\,H_{r+1}=\langle \tilde{w}_i,\,\alpha\rangle \subset V_2$, where the $\tilde{w}_i$ are lifts of ${w}_i$ in $V_2$. Then we conclude that $r=0$.

\vspace{0.5cm}

We finally  prove (*): as the multiplication map $V\otimes V\rightarrow V_2$ does not vanish on nonzero decomposable tensors, the Hopf Lemma says that ${\rm dim}\,(V_0\cdot H_r)\geq g+r-2$ if $r>0$, hence, if $g\geq 4$, we get
$${\rm dim}\,(V_0\cdot H_r)\geq r+2.$$  Thus  we must have $r=0$.
\end{proof}

Coming back to the proof of Proposition \ref{propourcrcutheoab}, we now observe the following facts.
\begin{lemm} \label{leuniqueglobal}  Notation and assumptions being as in Lemma \ref{lepourproppourcruc}, we have

(i) The local lift $\tilde{\nu}$ of Lemma \ref{lepourproppourcruc} is unique modulo horizontal  sections of $\mathcal{H}^1$. It follows that $\nabla\tilde{\nu}\underset{\rm loc}{=}\beta\otimes \alpha$ is  a global section of $\mathcal{H}^1\otimes \Omega_M$. In particular, the holomorphic  distribution induced by $\alpha$ is globally defined, at least on the Zariski open set of $M$ where $\delta\nu\not=0$.

(ii) Up to passing to a dense Zariski open set of $M$ if necessary, one has locally defined holomorphic functions $f$ on $M$ such that
\begin{eqnarray}\label{eqnablabetaalpaha} d(f\alpha)=0,\,\, \nabla (f^{-1}\beta)=\beta'\otimes \alpha,\,\,
\end{eqnarray} for some local section $\beta'$ of $\mathcal{H}^1$.
\end{lemm}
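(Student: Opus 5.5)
For (i), I compare two lifts $\tilde{\nu},\tilde{\nu}'$ of $\nu_D$ that both satisfy the conclusion of Lemma \ref{lepourproppourcruc}(i), say $\nabla\tilde\nu=\beta\otimes\alpha$ and $\nabla\tilde\nu'=\beta_1\otimes\alpha_1$. Their difference is $\eta_\mathbb{Z}+\eta^{1,0}$ with $\eta_\mathbb{Z}$ flat, so
$$\nabla\eta^{1,0}=\beta_1\otimes\alpha_1-\beta\otimes\alpha .$$
Taking $(0,1)$-parts gives $\overline{\nabla}\eta^{1,0}=\beta_1^{0,1}\otimes\alpha_1-\beta^{0,1}\otimes\alpha$, a tensor of rank $\le 2$ in $V^*\otimes W_2$ (resp. $V^*\otimes V_2$). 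On the other hand, $\overline{\nabla}\eta^{1,0}(m)=\mu_v$ with $v=\eta^{1,0}(m)$. This is injective of rank $g\ge 4$ unless $v=0$, exactly as in the uniqueness argument after Proposition \ref{propcarrenul}. Hence $\eta^{1,0}=0$, so the lifts differ by locally constant integral sections and $\nabla\tilde\nu$ is independent of the choice. The local expressions therefore glue to a global section of $\mathcal{H}^1\otimes\Omega_M$, holomorphic and algebraic by the usual regularity of Gauss–Manin. Where this section is nonzero it has rank one, so its image in $\Omega_M$ is a line subbundle spanned locally by $\alpha$. This makes the distribution $\ker\alpha$ globally defined on the open set where $\delta\nu\neq0$. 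There $(\nabla\tilde\nu)^{0,1}$ is a nonzero lift of $\delta\nu$, so $\beta\otimes\alpha\ne0$.

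For (ii), Lemma \ref{lepourproppourcruc}(ii) gives $\alpha\wedge d\alpha=0$. By Frobenius, after shrinking to a dense Zariski open set where $\alpha$ has no zeros, locally $\alpha=g\,dh$ for holomorphic $g,h$. Setting $f=g^{-1}$ gives $d(f\alpha)=0$. The rank-one tensor $\beta\otimes\alpha=(f^{-1}\beta)\otimes(f\alpha)$ is unchanged, so I replace $\beta$ by $f^{-1}\beta$ and $\alpha$ by the closed form $f\alpha$. Differentiating $\nabla\tilde\nu=\beta\otimes\alpha$ with $d\alpha=0$ now gives $\nabla\beta\wedge\alpha=0$ in $\mathcal{H}^1\otimes\Omega^2_M$, as in (\ref{eqnablatildenurank1derive}). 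Since $\alpha$ is nowhere zero, the wedge-division lemma in each $\mathcal{H}^1$-coordinate of a flat frame gives $\nabla\beta=\beta'\otimes\alpha$ for a local holomorphic section $\beta'$ of $\mathcal{H}^1$. This is (\ref{eqnablabetaalpaha}).

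The main subtlety is (i): one must check that the lift from Lemma \ref{lepourproppourcruc} is defined at the level of all of $\nabla\tilde\nu$, and not only its $(0,1)$-part. The rank-$\le2$ versus rank-$g$ argument above handles this. Two smaller points also need care: $\alpha$ is only defined up to a function, and the Frobenius normalization requires removing the zero locus of $\alpha$.
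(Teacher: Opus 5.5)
Your proof is correct and is essentially the paper's argument. For (i), the paper also writes the difference of two such lifts as $\gamma_{\mathbb{Z}}+\gamma^{1,0}$ and kills $\gamma^{1,0}$ using the uniqueness of rank $\leq 1$ lifts from Proposition \ref{propcarrenul} (exactly your rank $\leq 2$ versus rank $g$ comparison) together with injectivity of $\overline{\nabla}$. For (ii), it rescales $\alpha$ by an integrating factor $f$ and divides $\nabla(f^{-1}\beta)\wedge f\alpha=0$ by $f\alpha$, just as you do.

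Your aside that the glued section is algebraic ``by the usual regularity of Gauss--Manin'' is not part of this statement and needs more justification than that. The paper proves algebraicity separately in Lemma \ref{lealgbetaomega}.
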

\begin{proof} Any other local  lift $\tilde{\nu}'$ is of  the form
$\tilde{\nu}'=\tilde{\nu}+\gamma_\mathbb{Z}+\gamma^{1,0}$, where $\gamma_\mathbb{Z}$ is a section of $H^1_\mathbb{Z}$ and
$\gamma^{1,0}$ is a local section of $\mathcal{H}^{1,0}$. We then have
$$(\nabla\tilde{\nu}')^{0,1}=(\nabla\tilde{\nu})^{0,1}+\overline{\nabla}\gamma^{1,0}.$$
If  $\tilde{\nu}'$, $\tilde{\nu}$ both satisfy the property that $(\nabla\tilde{\nu}')^{0,1}$, $(\nabla\tilde{\nu})^{0,1}$ are rank $1$ elements of $\mathcal{H}^{0,1}\otimes \Omega_M$, the uniqueness in Proposition \ref{propcarrenul} implies that $\overline{\nabla}\gamma^{1,0}=0$, hence that $\gamma^{1,0}=0$ by injectivity of $\overline{\nabla}$. This proves (i).

  By Lemma \ref{lepourproppourcruc}(ii), the distribution determined by $\alpha$  is integrable, which means that, on the Zariski open set where it is well-defined and nonzero, $\alpha$ can be  locally (in the Euclidean topology)  multiplied by an adequate holomorphic function $f$  in such a way that $d(f\alpha)=0$. Equation (\ref{eqnablatildenurank1derive}) then gives
\begin{eqnarray}\label{eqnablatildenurank1newplus} 0= \nabla (f^{-1}\beta)\wedge f\alpha\,\,{\rm in}\,\,\Gamma(\mathcal{H}^1\otimes \Omega_M^2),
\end{eqnarray}
which  tells that $\nabla(f^{-1}\beta)=\beta'\otimes\alpha$ for some local section $\beta'\in \Gamma(\mathcal{H}^1)$,  proving (ii).
\end{proof}
We next prove  the following result:
\begin{lemm} \label{lealgbetaomega} The global holomorphic section $\nabla\tilde{\nu}\underset{loc}{=}\beta\otimes \alpha$ of $\mathcal{H}^1\otimes \Omega_M$ constructed in Lemma \ref{leuniqueglobal}(i) is algebraic.
\end{lemm}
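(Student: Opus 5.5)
The plan is to show that $\nabla\tilde{\nu}$ is built from algebraic objects by algebraic operations. By Lemma \ref{leuniqueglobal}(i), the section $\sigma:=\nabla\tilde{\nu}$ is well defined independently of the choice of lift. Since it is a global holomorphic section of $\mathcal{H}^1\otimes\Omega_M$, algebraicity reduces to two things: its $(0,1)$-part is algebraic, and its $(1,0)$-part is determined by the $(0,1)$-part through algebraic operations.

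\textbf{Step 1: the $(0,1)$-part.} First I show that $\sigma^{0,1}=(\nabla\tilde{\nu})^{0,1}\in\Gamma(\mathcal{H}^{0,1}\otimes\Omega_M)$ is algebraic. The divisor $D$ is algebraic on $J_M$, so its Dolbeault class $[D]^{1,1}$ lies in algebraic de Rham cohomology. Restricting to the fibers gives an algebraic section $\delta D$ of the bundle of infinitesimal invariants, whose fiber is $(\mathcal{H}^{0,1}\otimes\Omega_M)/\overline{\nabla}\mathcal{H}^{1,0}$. This uses the Theorem of \cite{voisinincras} on $\delta\nu_D$, applied in families: $R^1\pi_*\Omega_{J}$ is an algebraic coherent sheaf. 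By Claim \ref{claim2904}, $\sigma^{0,1}$ is the unique rank-$\leq1$ lift of $\delta D$ given by Proposition \ref{propcarrenul}. Write $X\subset\mathcal{H}^{0,1}\otimes\Omega_M$ for the algebraic subvariety of rank $\leq1$ tensors. Then $\sigma^{0,1}$ is the unique point of $X$ in the affine fiber $\delta D+\overline{\nabla}\mathcal{H}^{1,0}$. Its graph is therefore a constructible algebraic subset that is generically a section, so $\sigma^{0,1}$ is a rational, hence algebraic, section on a Zariski open set.

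\textbf{Step 2: the full section.} Next I recover $\sigma$ algebraically from $\sigma^{0,1}$. From Lemma \ref{lepourproppourcruc}, $\sigma=\beta\otimes\alpha$. The line spanned by $\alpha$ in $\Omega_M$ is the image of $\sigma^{0,1}$ viewed as a map $(\mathcal{H}^{0,1})^*\to\Omega_M$, so this line is algebraic. Write $\sigma=\sigma^{0,1}+\sigma^{1,0}$ with $\sigma^{1,0}=\beta^{1,0}\otimes\alpha$. To pin down $\beta^{1,0}$, I use flatness, $\nabla\sigma=0$, projected to $\mathcal{H}^{0,1}\otimes\Omega^2_M$:
\[
\overline{\nabla}_1(\sigma^{1,0})=-(\nabla\sigma^{0,1})^{0,1},
\]
where $\nabla$ is the algebraic Gauss--Manin connection. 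The right-hand side is algebraic because $\nabla$ is algebraic and $\sigma^{0,1}$ is algebraic; strictly, one lifts $\sigma^{0,1}$ to $\mathcal{H}^1$ using the algebraic Hodge splitting only up to $F^1$, and the $F^1$ ambiguity contributes only terms landing in $\overline{\nabla}_1$. By condition (ii) of Lemma \ref{lepourlatorsion}, $\overline{\nabla}_1$ is injective generically (this is checked as in Claim \ref{claim1404}). Hence $\sigma^{1,0}$ is the unique solution of an algebraic linear system with algebraic coefficients, and so it is algebraic on a Zariski open set.

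\textbf{Main obstacle.} The delicate point is making this last step honest. The decomposition $\mathcal{H}^1=\mathcal{H}^{1,0}\oplus\mathcal{H}^{0,1}$ is not algebraic; only the Hodge filtration is. So I would rephrase everything in terms of $F^1\subset\mathcal{H}^1$. Step 1 produces an algebraic section of $(\mathcal{H}^1/F^1)\otimes\Omega_M$. The required $\sigma$ is then its unique flat-compatible lift in $\mathcal{H}^1\otimes\Omega_M$: the set of lifts is an $F^1\otimes\Omega_M$-torsor, and the closedness condition cuts it down to one point by injectivity of $\overline{\nabla}_1$. That unique point is characterized by algebraic equations, so it is algebraic. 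Finally, a holomorphic section that is algebraic on a Zariski open set and globally holomorphic is algebraic by GAGA-type extension (meromorphy along the boundary), after shrinking $M$ as the paper allows.
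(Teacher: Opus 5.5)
Your argument is correct, but it reaches algebraicity by a different route than the paper.

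\textbf{The paper's route.} The paper replaces $\mathcal{H}^1/H^1_\mathbb{Z}$ by the algebraic group scheme $\mathcal{P}$ of line bundles with integrable connection on the fibers. On $\mathcal{P}$ the Gauss--Manin derivative is algebraically defined, via the canonical first-order extension of a pair $(L,\nabla_L)$. The paper takes Zariski-local algebraic lifts of $\nu_D$ to $\mathcal{P}$ and redoes the construction of the distinguished lift ``as before''. So $\tilde\nu$ itself is algebraic, and therefore so is $\nabla\tilde\nu$.

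\textbf{Your route.} You never lift $\nu_D$; you stay inside the algebraic de Rham bundle $\mathcal{H}^1$ with its algebraic Gauss--Manin connection.
\begin{enumerate}
\item You obtain the class $\bar\sigma$ of $\sigma$ in $(\mathcal{H}^1/F^1)\otimes\Omega_M$ from two facts: the infinitesimal invariant $\delta D$ is algebraic because $[D]^{1,1}$ is, and the unique rank $\leq 1$ lift of $\delta D$ is algebraic by constructibility.
\item You recover $\sigma$ as the unique $\nabla$-closed lift of $\bar\sigma$. This uses injectivity of $\overline{\nabla}_1$ on $\mathcal{H}^{1,0}\otimes\Omega_M$. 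That injectivity does hold in both cases, by the Hopf-lemma argument of Claim \ref{claim1404} with $v_0^*=0$ and $\alpha=0$. The paper needs it anyway, as condition (ii) of Lemma \ref{lepourlatorsion}.
\end{enumerate}

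\textbf{What each buys.} The paper's approach gives the stronger conclusion that the lift $\tilde\nu$ is algebraic. At the level of $\mathcal{P}$ it only needs injectivity of $\overline{\nabla}$. It does need the moduli of flat line bundles. It also leaves implicit that the correction $\eta$ of Claim \ref{claim2904} can be chosen algebraic, which is exactly your constructibility step made explicit. Your approach avoids $\mathcal{P}$ entirely, at the cost of invoking $\overline{\nabla}_1$-injectivity.

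Your Step 2 is only correct as reformulated in your last paragraph, in terms of $F^1$. The $C^\infty$ splitting $\sigma=\sigma^{0,1}+\sigma^{1,0}$ is not algebraic, and the remark about the line spanned by $\alpha$ plays no role.
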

\begin{proof} We note first that we could have used, instead of  the exact sequence (\ref{eqexcatpourJ}) defining $\mathcal{J}^\vee$, the following exact sequence
\begin{eqnarray}\label{eqexactaveczenbas} 0\rightarrow \mathcal{H}^{1,0}\rightarrow  \mathcal{H}^{1}/H^1_\mathbb{Z}\rightarrow \mathcal{J}^\vee\rightarrow 0.
\end{eqnarray}
Then the local lifts $\tilde{\nu}$ of $\nu$ are local sections of  $\mathcal{H}^{1}/H^1_\mathbb{Z}$  defined  modulo local sections of $\mathcal{H}^{1,0}$, and  we can use the Gauss-Manin connection
\begin{eqnarray}\label{eqnalabmodz}\nabla: \mathcal{H}^1/H^1_\mathbb{Z}\rightarrow \mathcal{H}^1\otimes\Omega_M.\end{eqnarray}
Next we observe that the exact sequence (\ref{eqexactaveczenbas}) has an algebraic version, in which $\mathcal{J}^\vee$ is replaced with the sheaf of algebraic sections of ${\rm Pic}^0(J_M/M)\rightarrow M$, $\mathcal{H}^{1,0}=R^0\pi_*\Omega_{J/M}$ is the algebraic version of the Hodge bundle, constructed using algebraic relative differentials, and the algebraic version of the term  $\mathcal{H}^{1}/H^1_\mathbb{Z}$ in the middle  is the sheaf $\mathcal{P}$ of algebraic sections of the abelian group scheme  over $M$ parameterizing line bundles on fibers of $\pi:J_M\rightarrow M$ equipped   with an (integrable) algebraic  connection.  Via the Riemann-Hilbert correspondence, the  complex points  of this group scheme over $t\in M$ are representations of $\pi_1(J_t)$ with values in $\mathbb{C}^*$, that is, elements of  $$H^1(J_t,\mathbb{C}^*)=H^1(J_t,\mathbb{C})/H^1(J_t,\mathbb{Z}).$$ (Note however  that  the later viewpoint is transcendental.) The connection (\ref{eqnalabmodz}) is algebraically defined  on the algebraic group scheme $\mathcal{P}$: if we have a line bundle with algebraic (integrable) connection $(L,\nabla_L)$ on a smooth projective   variety $X$ and a first order deformation
$X_\epsilon\rightarrow \Delta_\epsilon$ of $X$, where $\Delta_\epsilon:={\rm Spec}\,(\mathbb{C}[\epsilon]/\epsilon^2)$, then there exists a unique pair
$(L_\epsilon, \nabla_{L_\epsilon})$ consisting of a line bundle with connection on $X_\epsilon$, restricting to $(L,\nabla_L)$ on the central fiber.  If $P_1(L)$ is the bundle of $1$-jets of $L$, fitting in an extension
$$0\rightarrow \Omega_X(L)\rightarrow P_1(L) \rightarrow L\rightarrow0,$$
and $T_L:=P_1(L)^*\otimes L$,
there is an exact sequence
\begin{eqnarray}\label{eqnatexactdu7ju}0\rightarrow \mathcal{O}_X\rightarrow T_L \rightarrow T_X\rightarrow0,\end{eqnarray}
and the connection $\nabla$ gives a splitting of this exact sequence, and thus a canonical splitting
\begin{eqnarray}\label{eqsplitting}  H^1(X,T_L)\cong H^1(X,\mathcal{O}_X)\bigoplus H^1(X,T_X).\end{eqnarray}
The vector space $H^1(X,T_L)$ parameterizes the first order deformations of the pair $(X,L)$ and the natural map $H^1(X,T_L)\rightarrow H^1(X,T_X)$ induced by (\ref{eqnatexactdu7ju}) is the forgetful map. The splitting (\ref{eqsplitting}) induces a section $H^1(X,T_X)\rightarrow H^1(X,T_L)$ which provides the desired first order extension.
 These constructions are classical (see \cite{mazurmessing}, \cite{nitsure}) and can be also seen as a particular case of Simpson's work \cite{simpson} on de Rham moduli spaces.

Having this algebraic version of the exact sequence (\ref{eqexcatpourJ}), we argue  as before, except that our local lifts
$\tilde{\nu}$ of the algebraic section $\nu_D$ of $\mathcal{J}^\vee$ are now algebraic  sections of $\mathcal{P}$ which   are local  for the Zariski topology. The existence and uniqueness of these local lifts $\tilde{\nu}$ satisfying  property (\ref{eqnablatildenurank1}) are  proved as before. It follows that $\nabla\tilde{\nu}$ is algebraic.
\end{proof}

Combining Lemmas \ref{lepourproppourcruc}, \ref{leuniqueglobal} and \ref{lealgbetaomega} together with  Proposition \ref{lecructresgenpourpropourcrcutheoab} stated and proved below, we  conclude  that the local lifts $\tilde{\nu}$ of Lemma \ref{lepourproppourcruc} are in fact horizontal, hence Proposition \ref{propourcrcutheoab} is now fully proved.
\end{proof}

\begin{prop} \label{lecructresgenpourpropourcrcutheoab} Let  $M$ be smooth quasi-projective and $J_M\rightarrow M$ be a family of abelian varieties, which is either the universal abelian variety of dimension $g\geq 2$ over $M$ for some generically finite morphism $M\rightarrow \mathcal{A}_g$, or the universal Jacobian over $M$ for some generically finite morphism $M\rightarrow \mathcal{M}_g$, with $g\geq2$. Let $\omega\in \Gamma(\mathcal{H}^1\otimes \Omega_M)$ be  an algebraic section which is locally decomposable, i.e. locally of the form $\beta\otimes \alpha$, and annihilated by $\nabla$.
 Then $\omega=0$.
\end{prop}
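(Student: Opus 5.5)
The plan is to exploit the two pieces of structure on $\omega$, flatness and local decomposability, to produce an algebraic codimension-one foliation on $M$ along whose leaves $\mathcal{H}^1$ has a nonzero flat section. Ax--Schanuel for the variation of Hodge structure should force these leaves to be (weakly special) algebraic subvarieties. The theorem of the fixed part then rules out such subvarieties in $\mathcal{A}_g$ or $\mathcal{M}_g$ for $g\geq 2$.

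\emph{Step 1: local structure.} Assume $\omega\neq 0$ and work on the dense Zariski open set where $\omega$ is nonvanishing and $\alpha$ is nowhere zero. Since $\nabla\omega=0$, locally $\omega=\nabla\sigma$ for a local section $\sigma$ of $\mathcal{H}^1$, unique up to flat sections. Differentiating $\omega=\beta\otimes\alpha$ gives $\beta\otimes d\alpha+\nabla\beta\wedge\alpha=0$, so $\alpha\wedge d\alpha=0$. As in Lemma \ref{leuniqueglobal}(ii), one can then write locally $f\alpha=dh$ and $\nabla(f^{-1}\beta)=\beta'\otimes\alpha$.

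Consequently the algebraic distribution $\mathcal{F}:=\ker\alpha$ is integrable; it is algebraic because $\omega$ is, so $\alpha$ is algebraic up to a rational function. On each leaf $L$ of $\mathcal{F}$ two things hold:
\begin{itemize}
\item $\sigma_{\mid L}$ is constant in a flat trivialization, since $\nabla\sigma=\beta\otimes\alpha$ vanishes on $T_L$;
\item $f^{-1}\beta$ is a nonzero flat section of $\mathcal{H}^1_{\mid L}$.
\end{itemize}
In a flat trivialization on a simply connected open set, $\sigma$ is a function $F(h)$ of the single variable $h$, with $F'(h)=f^{-1}\beta$.

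\emph{Step 2: algebraicity of leaves (main obstacle).} I would apply the Ax--Schanuel theorem for variations of Hodge structure (Bakker--Tsimerman, and its mixed version used by Baldi et al.) to the period map of $M$. Consider the analytic subvariety of $M\times \check{D}$ given by the graph of the period map restricted to a leaf $L$. Along $L$ the family has a flat section, namely the vector $F'(h_0)$, which is constant in the flat frame on the leaf $h=h_0$. Hence the monodromy group of the Zariski closure $Z$ of $L$ fixes a nonzero vector.

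The leaves form an algebraic family of codimension-one analytic sets. Ax--Schanuel gives an atypical-intersection bound: either $L$ is contained in a proper weakly special subvariety, or the dimension count forces the Zariski closure of the graph to be large, which is incompatible with the invariant vector. In the first case I expect the weakly special subvarieties containing leaves to fill $M$ in codimension one, so a generic leaf is algebraic.

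An alternative I would try if this is awkward is to use the mixed (Jacobian fibration) Ax--Schanuel applied to the section $\sigma$ modulo $H^1_\mathbb{Z}$. The point is that $\sigma$ is constant on leaves while being algebraic over the base up to Betti coordinates, which should force the leaves to be bi-algebraic. This step is where I expect the real difficulty, and where the input from Baldi, acknowledged in the paper, enters.

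\emph{Step 3: contradiction via the fixed part.} Suppose $Z\subset M$ is an algebraic hypersurface, the closure of a leaf, over which $\mathcal{H}^1$ has a nonzero flat section. By Deligne's theorem of the fixed part, the flat subspace is a sub-Hodge structure, constant over $Z$. So the abelian varieties parametrized by $Z$ all contain a fixed isotrivial abelian subvariety, up to isogeny, of dimension $\geq 1$.

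The locus in $\mathcal{A}_g$ of abelian varieties containing a fixed elliptic curve, or any fixed abelian subvariety, has codimension $\geq g\geq 2$. The same holds for its intersection with the Torelli image of $\mathcal{M}_g$, where the codimension is $\geq 2$ for $g\geq 2$. Since $M$ is generically finite over $\mathcal{A}_g$ or $\mathcal{M}_g$, a hypersurface $Z$ cannot lie in such a locus. This contradicts $\omega\neq 0$, so $\omega=0$.
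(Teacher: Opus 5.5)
\textbf{Step 2 has a genuine gap, and it is the heart of the proof.} Your Step 1 matches the paper (Lemma \ref{leuniqueglobal}(ii)). Your Step 3 is a valid variant of the paper's ending. The paper uses that $H^1$ of a very general member is a simple Hodge structure, so the fixed part along a leaf is $0$ or everything, and the latter makes the moduli map factor through a curve. Your codimension count for loci with a fixed isogeny factor also works. Note, though, that transport along a leaf only preserves the line $\mathcal{L}={\rm Im}\,\omega$, so you need Deligne's semisimplicity to make the resulting character finite before invoking the fixed part.

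Step 2 does not go through as formulated, for three reasons.
\begin{enumerate}
\item The graph of the period map in $M\times\check{D}$ only records the Hodge filtration in a flat frame. The structure you actually have is different: the algebraic line $\mathcal{L}=\langle\beta\rangle$, which is not a Hodge-theoretic subobject ($\beta$ is an arbitrary algebraic section of $\mathcal{H}^1$), is constant in flat frames along the leaves of $\alpha$. No algebraic $W\subset M\times\check{D}$ detects this, so you never exhibit an atypical intersection.
\item The assertion that the monodromy of the Zariski closure $Z$ of $L$ fixes a nonzero vector is unjustified, indeed circular. The flat section exists only along $L$, whose monodromy can be much smaller than that of $Z$. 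If $L$ is Zariski dense, then $Z=M$ has monodromy Zariski dense in ${\rm Sp}(2g)$, which is exactly the case to be excluded. So nothing rules out your second alternative.
\item Your mixed variant is not available either. The proposition only gives $\omega$, with no integrality or algebraicity for $\sigma$ modulo $H^1_{\mathbb{Z}}$.
\end{enumerate}

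\textbf{The missing idea is to put the atypicality in the frame bundle of the flat bundle itself.}
\begin{enumerate}
\item Let $p:\mathfrak{F}\to M$ be the symplectic frame bundle of $(\mathcal{H}^1,\nabla)$. Let $\mathfrak{F}_{\mathcal{L}}$ be the algebraic subvariety of frames with $e_1\in\mathcal{L}$; it has codimension $2g-1$ in $\mathfrak{F}$.
\item Take a frame $e$ at $m_0$ with $e_1\in\mathcal{L}_{m_0}$. Since $\mathcal{L}$ is flat along the $\alpha$-leaf through $m_0$, the horizontal leaf $\mathfrak{F}_e$ meets $\mathfrak{F}_{\mathcal{L}}$ along the lift of that whole $\alpha$-leaf.
\item So the intersection has codimension $\leq 1$ in $\mathfrak{F}_e$, instead of the expected $2g-1$. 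For $g\geq 2$ this is atypical.
\item The Galois group of the connection is Zariski dense in ${\rm Sp}(2g)$. Hence the Ax--Schanuel theorem for flat connections (Bl\'{a}zquez-Sanz--Casale--Freitag--Nagloo, Theorem A) puts $p(\mathfrak{F}_e\cap\mathfrak{F}_{\mathcal{L}})$ inside a proper $\nabla$-special algebraic subvariety.
\item This image contains an open piece of a codimension-one leaf, so the leaves are algebraic. Then $\alpha$ comes from a rational map $M\dashrightarrow D$ to a curve, and your Step 3 finishes the argument.
\end{enumerate}
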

\begin{proof} We assume that $\omega\not=0$ and get a contradiction. Let $\mathcal{H}^1$ be the Hodge bundle with Gauss-Manin  connection $\nabla$. This is an algebraic vector bundle on $M$. Let $p:\mathfrak{F}\rightarrow M$ be the symplectic frame bundle of $\mathcal{H}^1$. This is a principal ${\rm Sp}(2g)$-bundle which  is  equipped with a flat connection $\nabla$. The leaves for this connection are complex submanifolds of  $\mathfrak{F}$ corresponding to the locally constant frames. The locally decomposable algebraic section $\omega\underset{loc}{=}\beta\otimes \alpha$ provides a line subbundle $\mathcal{L}\subset   \mathcal{H}^1$, hence an algebraic subvariety
$$\mathfrak{F}_{\mathcal{L}}\subset  \mathfrak{F}$$
consisting of frames $e_1,\ldots,\,e_{2g}$ with $e_1$ belonging to $\mathcal{L}$ (that is, proportional to $\beta$).
The intersection of $\mathfrak{F}_{\mathcal{L}}$ with the  leaf $\mathfrak{F}_e$  passing through a point $e=(e_1,\ldots,\,e_{2g})\in \mathfrak{F}_{m_0}$,  is described over $M$ by the condition that $e_1(m)$ is proportional to $\beta(m)$ in the fiber $\mathcal{H}^1_{m}:=H^1(J_{m},\mathbb{C})$. By Lemma \ref{leuniqueglobal}(ii), there exist an integrable codimension $1$ algebraic  distribution $\alpha$ on $M$ and   local holomorphic functions $f$ on $M$ such that $f\beta$ is horizontal along the leaves of the distribution $\alpha$. Hence the intersection  $\mathfrak{F}_e\cap  \mathfrak{F}_{\mathcal{L}}$ contains he inverse image   under  $p$ of  the leaf of $\alpha$ passing through $m_0\in M$. In particular, it has codimension $\leq  1$ in $\mathfrak{F}_e$,  while $\mathfrak{F}_{\mathcal{L}}$ has codimension $2g-1$ in $\mathfrak{F}$. It follows that the intersection $\mathfrak{F}_e\cap  \mathfrak{F}_{\mathcal{L}}$ is not transverse if $g\geq 2$. As in both considered cases, the Galois group of the connection is Zariski dense in the symplectic group, we can apply
 \cite[Theorem A]{nagloo}. This tells us  that the images
$p(\mathfrak{F}_e\cap \mathfrak{F}_{\mathcal{L}})$ in $M$ are contained in (proper) $\nabla$-special algebraic varieties (on which the Galois group of the restricted connection is not anymore Zariski dense). As the images $p(\mathfrak{F}_e\cap \mathfrak{F}_{\mathcal{L}})$ contain open sets of the leaves, which are of codimension $1$ in $M$, we conclude that the leaves of $\alpha$ are algebraic, that is,   the distribution $\alpha$ is algebraically integrable. It follows that the distribution $\alpha$  is, at the general point of  $M$, the relative tangent bundle of a rational map  $M\dashrightarrow D$, where $D$ is a curve, and in particular it is defined by a closed rational $1$-form on $M$.
We then  know by Lemma \ref{leuniqueglobal}(ii) that  the class $\beta$ conveniently normalized satisfies $\nabla\beta=0$  along any leaf $M_{\alpha,t}$ of $\alpha$. By Deligne's global invariant cycles theorem, the set of classes globally invariant along the leaf $M_{\alpha,t}$  (which we know to be algebraic) generates at any point $m\in M_{\alpha,t}$  a constant Hodge substructure of $H^1(J_{M,m},\mathbb{Q})$. As, by varying $t$,  $m$ can be taken very general in $M$,  any Hodge substructure of $H^1(J_{M,m},\mathbb{Q})$ is $0$ or equals $H^1(J_{M,m},\mathbb{Q})$. The first case would provide a contradiction with $\omega\not=0$. The second case would say that the moduli map for the family $J_M\rightarrow M$ is constant along the leaves, hence factors through a curve, which is not the case in our two geometric situations.
 \end{proof}

\begin{rema}{\rm Bruno Klingler suggested the following alternative argument replacing the use of the invariant cycles theorem at the end of  the proof given above. Once one knows that the distribution $\alpha$ is algebraically integrable, one gets that the (Zariski closure of the) image of the monodromy representation restricted to the general leaf of $\alpha$ is a proper and normal subgroup of the full algebraic monodromy group, which is ${\rm Sp}(2g)$. As this image  is  a proper subgroup because the leaves are $\nabla$-special, it follows that the image is trivial. We are thus in the second case above.}
\end{rema}

\section{Proof of Theorem \ref{theoquestionpirola}  \label{secpirola}}
This section is devoted to the proof of Theorem \ref{theoquestionpirola}. We start with the following statement, proved in \cite{verni}. For   a smooth curve $C$ of genus $4$, let $\gamma_{P,C}\in {\rm CH}^1(C)$ be the Griffiths-Pirola divisor (which is defined up to sign). We can see also $\gamma_{P,C}$ as an element of $ {\rm CH}^1(C^{(2)})_{\rm hom}$ using the isomorphism ${\rm Pic}^0(C^{(2)})\cong {\rm Pic}^0(C)$.
\begin{theo} \label{proverni} \cite[Theorem 1.4]{verni} Let $\pi:\mathcal{C}\rightarrow B$ be the universal family of smooth  $(2,3)$-complete intersections in $\mathbb{P}^3$, and let $\pi_2: \mathcal{C}^{(2)/B}\rightarrow B$ be the corresponding family of symmetric products. Let $C_\eta,\,C^{(2)}_\eta$ be the respective  generic fibers of $\pi$, $\pi_2$. Then ${\rm CH}_0(C^{(2)}_\eta)_{\rm hom}\otimes \mathbb{Q}$ is $1$-dimensional, generated over $\mathbb{Q}$ by $\gamma_{P,\eta}^2$.
\end{theo}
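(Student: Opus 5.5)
The plan is to split the statement into two parts: an upper bound, that ${\rm CH}_0(C^{(2)}_\eta)_{\rm hom}\otimes\mathbb{Q}$ has dimension at most $1$, and a lower bound, that $\gamma_{P,\eta}^2\neq 0$ there.

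\textbf{Lower bound.} The nonvanishing is Theorem \ref{theoverni}. The infinitesimal invariant $\delta(D_{\gamma_P}^2)=(\delta\gamma_P)^2$ is nonzero at a general point. Theorem \ref{theomumfordvar} applied in contrapositive then shows that no nonzero multiple of $\gamma_{P,\eta}^2$ vanishes in ${\rm CH}_0(C_\eta^{(2)})$. Since $B$ dominates $\mathcal{M}_4$ and the Griffiths--Pirola divisor is defined after a degree $2$ base change, we work over the corresponding double cover $\widetilde B$. The square $\gamma_P^2$ is invariant under the sign change, so it descends to $C_\eta^{(2)}$.

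\textbf{Upper bound, reduction to sections.} Here we use that $C_\eta^{(2)}$ is a family of surfaces in a projective bundle over a rational base with a very large linear system parameter. We follow the method of \cite{voisinhodgebloc} (Bloch--Srinivas style spreading). A $0$-cycle on $C^{(2)}_\eta$ homologous to $0$ spreads to a cycle on $\mathcal{C}^{(2)/B}$, that is, to a correspondence. Concretely, write $\mathcal{C}^{(2)/B}$ as fibered over pairs of points: the total space $\mathcal{C}^{(2)/B}$ maps to $(\mathbb{P}^3)^{(2)}$, and the fiber over a general pair $\{x,y\}$ is the linear space of $(2,3)$-complete intersections through $x,y$, a projective-bundle-like space. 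Hence ${\rm CH}^*(\mathcal{C}^{(2)/B})$ is generated, after localization, by classes pulled back from $(\mathbb{P}^3)^{(2)}$ (or its Hilbert scheme) and tautological classes of the bundle structure. Restricting to the generic fiber, ${\rm CH}_0(C^{(2)}_\eta)$ is then spanned by restrictions of polynomials in a few explicit divisor and codimension $2$ classes: the hyperplane class from $\mathbb{P}^3$ (the diagonal-type classes $x$, $\delta$) and the classes coming from the relative $\mathcal{O}(1)$'s. The localization sequence says that the contribution of the complement of the open subset where the bundle structure holds (where $x,y$ impose dependent conditions, essentially the diagonal) is generated by cycles supported on the diagonal. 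Those give $0$-cycles on $C_\eta$ pushed into $C_\eta^{(2)}$, and by Franchetta-type results these are multiples of canonical classes.

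\textbf{Upper bound, the main obstacle.} We must show that the homologically trivial part of this finite explicit span is at most one-dimensional. I expect to compute that it consists of combinations of $K^2$, $K\cdot\delta$, $\delta^2$, and of codimension $2$ classes given by pairs on a line or on a ruling of the quadric. The ruling classes $D_1,D_2$ appear, but only symmetrically over $B$, through $D_1^2+D_2^2$ and $D_1D_2$, and modulo classes coming from $\mathbb{P}^3$ they reduce to $(D_1-D_2)^2=\gamma_P^2$. The delicate points are two. First, we must check that the quadric $Q$ containing $C$ contributes exactly the rulings through the $\widetilde B$ cover. Second, we must check that the relation $D_1+D_2=K$ kills everything else, so that $\gamma_P^2$ spans the homologically trivial part.
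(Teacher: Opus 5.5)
Your two-part structure is the one the paper relies on. Nonvanishing of $\gamma_{P,\eta}^2$ comes from Theorem \ref{theoverni} via Theorem \ref{theomumfordvar}, and your lower bound, including the descent of the square with $\mathbb{Q}$-coefficients, is fine. The bound $\dim\leq 1$ comes from the spreading argument of \cite{voisinhodgebloc}, which shows that ${\rm CH}_0(C^{(2)}_\eta)_{\rm hom}\otimes\mathbb{Q}$ is spanned by the Green--Griffiths cycle.

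\textbf{The gap.} You never actually prove the upper bound, and the way you propose to finish it would not close the argument. Suppose, as you expect, that the generating set contains ruling classes such as $D_1D_2$. Then, using $D_1+D_2=K$, it contains $\gamma_P^2$ itself, and its degree-zero part is spanned by the Green--Griffiths cycle \emph{and} $\gamma_P^2$. Nothing in your sketch shows these two are proportional, and that proportionality is essentially the content of the statement. The relation $D_1+D_2=K$ only rewrites $\langle K^2,D_1D_2\rangle$ as $\langle K^2,\gamma_P^2\rangle$; it cannot cut a dimension down.

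\textbf{The fix.} The rulings never enter the generating set, and your own reduction paragraph already shows this. The relative $\mathcal{O}(1)$'s are pulled back from $B$ (from the space of quadrics, and from the projective bundle of cubics modulo $q$ that contains $B$). Hence they restrict to $0$ on the generic fiber and contribute nothing, contrary to what you wrote.

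Concretely, the argument runs as follows.
\begin{itemize}
\item Off the relative diagonal, $\mathcal{C}\times_B\mathcal{C}$ is open in an iterated projective bundle over $\mathbb{P}^3\times\mathbb{P}^3\setminus\Delta$. This holds because two distinct points impose independent conditions on quadrics and on $H^0(Q,\mathcal{O}_Q(3))$.
\item The relative diagonal is $\mathcal{C}$, which is open in an iterated projective bundle over $\mathbb{P}^3$. Hence ${\rm CH}^1(C_\eta)_\mathbb{Q}=\mathbb{Q}K$.
\item By localization, ${\rm CH}_0(C_\eta\times C_\eta)_\mathbb{Q}$ is spanned by $\Delta_*K$ together with the restrictions of $h_1^2,h_2^2,h_1h_2$, where $h_i$ is the hyperplane class pulled back from the $i$-th factor. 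Since $h^2_{\mid C_\eta}=0$, these restrictions are $0,\,0,\,K\times K$.
\item The push-forward $\pi_*$ along $\pi:C_\eta\times C_\eta\rightarrow C^{(2)}_\eta$ is surjective on ${\rm CH}_0\otimes\mathbb{Q}$. Setting $\delta_*K:=\pi_*\Delta_*K$, this gives ${\rm CH}_0(C^{(2)}_\eta)_\mathbb{Q}=\langle \pi_*(K\times K),\,\delta_*K\rangle$, with degrees $36$ and $6$.
\item So the homologically trivial part is at most the line spanned by $\pi_*(K\times K)-6\,\delta_*K$.
\end{itemize}
Your lower bound then finishes the proof: $\gamma_{P,\eta}^2$ is nonzero and of degree $0$, hence it generates. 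You never need to place $\gamma_P^2$ in the span by hand; its proportionality to the Green--Griffiths cycle comes out as a corollary.
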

More precisely, Verni proves in \cite{verni} that $\gamma_{P,\eta}^2\in {\rm CH}_0(C^{(2)}_\eta)_{\rm hom}\otimes \mathbb{Q}$ is different from $0$ (see Theorem \ref{theoverni}), while earlier arguments   appearing in \cite{voisinhodgebloc} (see also \cite{fulavi}) show that
${\rm CH}_0(C^{(2)}_\eta)_{\rm hom}\otimes \mathbb{Q}$ is at most $1$-dimensional over $\mathbb{Q}$, generated by the Green-Griffiths $0$-cycle of \cite{greengriffiths}.
Note that, due to the sign ambiguity, $\gamma_{P,\eta}$ is not defined as an element of ${\rm CH}^1({C}_\eta^{(2)})_{\rm hom}$, but only as an element of ${\rm CH}^1({C}^{(2)}_{\eta'})_{\rm hom}$ for a degree $2$ field extension of $\mathbb{C}(\mathcal{M}_4)$. However, its square $\gamma_{P,\eta'}^2$ descends to     an element $\gamma_{P,\eta}^2$ of ${\rm CH}_0(C^{(2)}_\eta)$, although maybe only with rational coefficients (see below for the precise argument).

Let $\pi: \mathcal{C}\rightarrow \mathcal{M}_4$ be the universal curve of genus $4$ and let $J\rightarrow \mathcal{M}_4$ be the corresponding Jacobian fibration. Assume there is a rational  section $\gamma: \mathcal{M}_4\dashrightarrow {K}$ of the associated Kummer fibration
$${K}:=J/\pm{\rm Id}.$$ As the quotient map $J\rightarrow K$  is a double cover, one gets by base changing  under $\gamma$ a  double cover
$$\mathcal{M}_4':=\mathcal{M}_4\times_{K}{J}.$$
The  double cover  $\mathcal{M}_4'$  also  maps to ${J}$, via a map that we denote by $\gamma'$.  Let $$\mathcal{C}':=\mathcal{C}\times_{\mathcal{M}_4}\mathcal{M}'_4$$
and let  ${J}'=J\times_{\mathcal{M}_4}\mathcal{M}'_4$ be the Jacobian fibration  of the family $\mathcal{C}'\rightarrow \mathcal{M}'_4$ of genus $4$ curves. The rational map $\gamma'$
  provides a rational section of $\pi':J'\rightarrow \mathcal{M}'_4$ that we also denote by $\gamma'$. By construction, the variety $\mathcal{M}'_4$ has an involution $i$ which makes $\gamma'$ compatible with the involution $-{\rm Id}$ on ${J}$, that is,
\begin{eqnarray}\label{eqpourantiinvadi1ermai} \gamma'\circ i=-\gamma'.
\end{eqnarray}

The section $\gamma'$ gives (maybe with rational coefficients, and after replacing  $\mathcal{M}'_4$ by a dense Zariski open set) a divisor $D_{\gamma'}$ on $\mathcal{C}'$ and also on the induced relative second symmetric product ${\mathcal{C}'}^{(2)/\mathcal{M}'_4}$.  Hence we get a codimension $2$-cycle
$$D_{\gamma'}^2\in {\rm CH}^2({\mathcal{C}'}^{(2)/\mathcal{M}'_4}).$$
Let $q: {\mathcal{C}'}^{(2)/\mathcal{M}'_4}\rightarrow {\mathcal{C}}^{(2)/\mathcal{M}_4}$ be the natural double cover. Equation (\ref{eqpourantiinvadi1ermai}) tells that, over the same Zariski open set of  $\mathcal{M}'_4$, the divisor
 $D_{\gamma'}\in {\rm CH}^1({\mathcal{C}'}^{(2)/\mathcal{M}'_4})$ associated to $\gamma'$  is anti-invariant under the involution $q$. It follows that, over the same Zariski open set,
 $$D_{\gamma'}^2\in {\rm CH}^2({\mathcal{C}'}^{(2)/\mathcal{M}'_4})$$ is invariant under $q$, hence   the cycle
$D_{\gamma'}^2$
comes (with rational coefficients) from a codimension $2$-cycle on ${\mathcal{C}}^{(2)/\mathcal{M}_4}$. More precisely, possibly after shrinking $\mathcal{M}'_4$ and $\mathcal{M}_4$, one has by $q$-invariance
$$D_{\gamma'}^2=2q^*q_* D_{\gamma'}^2\,\,{\rm in}\,\,{\rm CH}^2({\mathcal{C}'}^{(2)/\mathcal{M}'_4}).$$

Applying Theorem \ref{proverni} to $q_* D_{\gamma'}^2$, one gets the following first step in the proof of Theorem \ref{theoquestionpirola}.
\begin{prop} \label{profirststep} Let $\mathcal{C}\rightarrow  \mathcal{M}_4$ be the universal curve of genus $4$, and ${J}\rightarrow \mathcal{M}_4 $ the associated Jacobian fibration. Then, if $\gamma$ is a rational section of the fibration $K=J/\pm {\rm Id}\rightarrow \mathcal{M}_4$, there exists a rational number $\lambda$ such that,  for a general point $b\in \mathcal{M}_4$,
\begin{eqnarray}\label{eq1ermai} D_{\gamma,b}^2=\lambda\, D_{\gamma_{P,b}}^2\,\,{\rm in}\,\,{\rm CH}_0(C_b^{(2)}).\end{eqnarray}
\end{prop}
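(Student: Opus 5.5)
The plan is to descend the square $D_{\gamma'}^2$ to the generic fiber of $\mathcal{C}^{(2)/\mathcal{M}_4}\rightarrow\mathcal{M}_4$, apply Theorem \ref{proverni} there, and then specialize to a general point $b$.

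First, we make the descent precise. Over a dense Zariski open set of $\mathcal{M}'_4$, the cycle $D_{\gamma'}$ is anti-invariant under $q$ by (\ref{eqpourantiinvadi1ermai}), so $D_{\gamma'}^2$ is $q$-invariant. Hence $Z:=\frac12 q_*D_{\gamma'}^2$ is a codimension $2$ cycle with $\mathbb{Q}$-coefficients on $\mathcal{C}^{(2)/\mathcal{M}_4}$ (after shrinking), and it satisfies $q^*Z=D_{\gamma'}^2$. For any $b$ in this open set and any preimage $b'$ of $b$, restricting to fibers gives
$$Z_{\mid C_b^{(2)}}=D_{\gamma',b'}^2=D_{\gamma,b}^2\quad\text{in }{\rm CH}_0(C_b^{(2)})_{\mathbb{Q}}.$$
The choice of $b'$ does not matter, because the sign ambiguity of $D_{\gamma,b}$ disappears in the square. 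The same construction applied to $\gamma_P$, using the double cover $\widetilde{\mathcal{M}}_4$, produces a cycle $Z_P$ whose restriction to $C_b^{(2)}$ is $D_{\gamma_{P,b}}^2$.

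Next, we pass to the generic point. $Z$ restricts to a class $Z_\eta\in {\rm CH}_0(C^{(2)}_\eta)\otimes\mathbb{Q}$, and it lies in the homologically trivial part because $D_{\gamma}$ has degree $0$ on fibers, so its square is cohomologous to $0$ fiberwise. Theorem \ref{proverni} is stated for the family of $(2,3)$-complete intersections, which dominates $\mathcal{M}_4$. We compare the two families as follows: if a $0$-cycle on $C_\eta^{(2)}$ over $\mathbb{C}(\mathcal{M}_4)$ pulls back to a relation over the larger function field $\mathbb{C}(B)$, then pushforward along this finite-over-the-generic-point extension, up to multiplication by the degree of the extension and after rationalizing, shows the relation already holds over $\mathbb{C}(\mathcal{M}_4)$ with $\mathbb{Q}$-coefficients. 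Alternatively, one can work directly over $B$, since both sides are pulled back from $\mathcal{M}_4$. Either way, we get $Z_\eta=\lambda\,\gamma_{P,\eta}^2$ for some $\lambda\in\mathbb{Q}$.

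Finally, we spread out. The equality $Z_\eta=\lambda (Z_P)_\eta$ in ${\rm CH}_0(C^{(2)}_\eta)_\mathbb{Q}$ means that $N(Z-\lambda Z_P)$ is rationally equivalent to a cycle supported over a proper closed subset of $\mathcal{M}_4$, by the localization sequence. It is therefore rationally equivalent to $0$ over a dense Zariski open set $U\subset\mathcal{M}_4$. Restricting to fibers over $b\in U$, which is compatible with rational equivalence via the refined Gysin pullback to smooth fibers, gives (\ref{eq1ermai}) for general $b$.

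I expect the main obstacle to be the bookkeeping in the descent step. Specifically, one has to check that $D_{\gamma'}$ is a genuine divisor class, possibly with $\mathbb{Q}$-coefficients, on ${\mathcal{C}'}^{(2)/\mathcal{M}'_4}$. This requires that the Poincaré bundle normalization be compatible with the involution, which is achieved by using the isomorphism ${\rm Pic}^0(C^{(2)})\cong{\rm Pic}^0(C)$ and passing to $\mathbb{Q}$-coefficients. One also has to check the identity $q^*q_*=2$ on $q$-invariant classes, and that the generic-fiber statement of Theorem \ref{proverni} transfers from $B$ to $\mathcal{M}_4$.
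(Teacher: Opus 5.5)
Your proposal follows the paper's argument: the $q$-invariance of $D_{\gamma'}^2$ lets you descend it with $\mathbb{Q}$-coefficients to $\mathcal{C}^{(2)/\mathcal{M}_4}$ (your normalization $Z=\frac12 q_*D_{\gamma'}^2$, with $q^*Z=D_{\gamma'}^2$, is the correct one), Theorem \ref{proverni} is applied at the generic point, and the resulting relation is spread out to a general fiber, a step the paper leaves implicit. One slip: $\mathbb{C}(B)$ is not a finite extension of $\mathbb{C}(\mathcal{M}_4)$, since the fibers of $B\rightarrow\mathcal{M}_4$ are ${\rm PGL}(4)$-orbits and hence positive-dimensional, so drop the pushforward argument and use your alternative instead: spread out over a dense open set of $B$, whose image contains a dense open set of $\mathcal{M}_4$ (or invoke the injectivity of ${\rm CH}_0(X_K)_{\mathbb{Q}}\rightarrow {\rm CH}_0(X_L)_{\mathbb{Q}}$ for an arbitrary field extension $L/K$).
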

\begin{rema}{\rm The descent to $J\rightarrow \mathcal{M}_4$ has been necessary only  to apply Proposition \ref{proverni} in order  to get (\ref{eq1ermai}). From now on, we will work on a degree $4$ cover $J''\rightarrow \mathcal{M}''_4$ of $J\rightarrow \mathcal{M}_4$, where both divisors $D_{\gamma_{P}}\in {\rm CH}^1(J'')$ and $D_{\gamma'}\in {\rm CH}^1(J'')$ are defined.}
\end{rema}
We now apply the theory of infinitesimal invariants described in Section \ref{secreview}. From Proposition \ref{profirststep} and Lemma \ref{lenouveau2804}, we get that, shrinking $\mathcal{M}''_4$ to a dense Zariski open set if necessary, the cycle $$D_{\gamma'}^2-\lambda D_{\gamma_P}^2\in
 {\rm CH}^2(J'')_\mathbb{Q}$$ is trivial, hence also its infinitesimal invariant
$$\delta D_{\gamma'}^2(b)-\lambda\, \delta D_{\gamma_P}^2(b)=([D_{\gamma'}^2]^{2,2}-\lambda \, [D_{\gamma_P}^2]^{2,2})_{\mid J''_b}$$
$$=([D_{\gamma'}]^{1,1}_{\mid J''_b})^2-\lambda \,([D_{\gamma_P}]^{1,1}_{\mid J''_b})^2\,\,{\rm in}\,\, H^2(\Omega^2_{J'' \mid J''_b})$$
at the general point $b\in \mathcal{M}''_4$.  The map $\mathcal{M}''_4\rightarrow \mathcal{M}_4$ is \'{e}tale at the general point of $\mathcal{M}''_4$, so the infinitesimal invariants
(see Section \ref{secreview})
$$[D_{\gamma'}]^{1,1}_{\mid J''_b}=\delta\gamma'(b)\in H^1(\Omega_{J'' \mid J''_b}),$$
$$ [D_{\gamma_P}]^{1,1}_{\mid J''_b}=\delta\gamma_P(b)\in H^1(\Omega_{J'' \mid J''_b}) $$
and their squares
are as  in the previous section,  in case (ii).
Using the explicit description (see (\ref{eqI1V}), (\ref{eqI2V2704})) of the spaces $I_{1,V}$ and $I_{2,V}$  of infinitesimal invariants given in Section \ref{secthsquare}, and the notation $\overline{M}_{2,V}$ of (\ref{eqM2barpourV}) for the square map,
one concludes the following.
\begin{coro}\label{corolecarrenul} With the assumptions and  notation of Proposition \ref{profirststep}, the infinitesimal invariant $\delta\gamma'(b)\in I_{1,V}$ of the section $\gamma'$ at a general point $b\in \mathcal{M}_4'$ satisfies
\begin{eqnarray} \overline{M}_{2,V}(\delta\gamma'(b))=\lambda  \overline{M}_{2,V}(\delta\gamma_{P}(b))\,\,{\rm in}\,\,I_{2,V}.
\end{eqnarray}
\end{coro}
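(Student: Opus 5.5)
This is essentially a bookkeeping consequence of Proposition \ref{profirststep} together with the identifications of Section \ref{secthsquare}. We work on the degree $4$ cover $J''\rightarrow \mathcal{M}''_4$, where both $D_{\gamma'}$ and $D_{\gamma_P}$ are defined as divisors (with rational coefficients), homologous to $0$ on the fibers.

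\emph{Step 1: vanishing in the Chow group.} By Proposition \ref{profirststep}, for general $b$ one has $D_{\gamma',b}^2=\lambda D_{\gamma_P,b}^2$ in ${\rm CH}_0(C_b^{(2)})_\mathbb{Q}$. By Lemma \ref{lenouveau2804}, applied to $D_{\gamma'}-\mu D_{\gamma_P}$, or more precisely by its proof via Bloch's formula and the Fourier transform, which is linear in the relevant quadratic expressions, the same relation holds in ${\rm CH}^2(J''_b)_\mathbb{Q}$. Hence the cycle $Z:=D_{\gamma'}^2-\lambda D_{\gamma_P}^2\in{\rm CH}^2(J'')_\mathbb{Q}$ restricts to zero on the fibers over a dense open set. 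Clearing denominators and applying Theorem \ref{theomumfordvar} gives $\delta Z(b)=0$ in $H^2(\Omega^2_{J''\mid J''_b})$ for general $b$.

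\emph{Step 2: computing $\delta Z$ as squares.} By compatibility of the Dolbeault cycle class with products, as in (\ref{eqcarrecarrecycle}), we get $\delta Z(b)=(\delta\gamma'(b))^2-\lambda(\delta\gamma_P(b))^2$, with $\delta\gamma'(b),\delta\gamma_P(b)\in H^1(\Omega_{J''\mid J''_b})^0=L^1H^1$. The products of these classes then lie in $L^2H^2(\Omega^2_{J''\mid J''_b})$.

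\emph{Step 3: translating to $I_{1,V}$, $I_{2,V}$.} Since $\mathcal{M}''_4\rightarrow\mathcal{M}_4$ is étale at a general point, the spaces $H^1(\Omega_{J''\mid J''_b})^0$ and $L^2H^2(\Omega^2_{J''\mid J''_b})$ are identified with $I_{1,V}$ and $I_{2,V}$ of case (ii). Here we use Propositions \ref{protrivbook} and \ref{propspecseq}, which give $L^2H^2\cong I_{2,V}$, and in particular $L^2H^2\hookrightarrow H^2(\Omega^2_{J''\mid J''_b})$. By \cite[Proposition 5.1]{verni}, the square map becomes $\overline{M}_{2,V}$. Therefore $\delta Z(b)=0$ reads $\overline{M}_{2,V}(\delta\gamma'(b))=\lambda\,\overline{M}_{2,V}(\delta\gamma_P(b))$ in $I_{2,V}$.

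\emph{Main obstacle.} The only delicate point is ensuring that the vanishing in $H^2(\Omega^2_{J''\mid J''_b})$ implies vanishing in the subspace $L^2H^2$, that is, that $L^2H^2\rightarrow H^2$ is injective. This follows from the $E_2$ degeneration established in Proposition \ref{propspecseq} (or from the $\mu_n$ argument for abelian fibrations), since $L^2$ is the deepest step of the filtration on the abutment. The sign ambiguity of $\gamma'$ and $\gamma_P$ is harmless, because only their squares enter.
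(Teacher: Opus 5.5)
Your proposal is correct and follows the same route as the paper. You transfer the relation of Proposition \ref{profirststep} from $C_b^{(2)}$ to ${\rm CH}^2(J''_b)$, deduce via Theorem \ref{theomumfordvar} that the infinitesimal invariant of $D_{\gamma'}^2-\lambda D_{\gamma_P}^2$ vanishes, write it as $(\delta\gamma'(b))^2-\lambda(\delta\gamma_P(b))^2$, and identify the square map with $\overline{M}_{2,V}$ via the $E_2$-degeneration and \cite[Proposition 5.1]{verni}. One refinement: applying Lemma \ref{lenouveau2804} to a single divisor $D_{\gamma'}-\mu D_{\gamma_P}$ would not suffice here, so your second justification, the polarized form of its proof (where $D_1D_2$ on $C^{(2)}$ goes to $s_1*s_2$ and then to $F(s_1)\cdot F(s_2)$), is the one that works, a point the paper leaves implicit.
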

\subsection{The equation $\overline{M}_{2,V}(\overline{\phi})=\lambda \,\overline{M}_{2,V}(\delta\gamma_P)$}
Let $C$ be a generic curve of genus $4$,  $V:=H^0(C,K_C)$  and let $q\in {\rm Sym}^2V$ be the unique (nondegenerate) quadratic form vanishing in $H^0(C,2K_C)$. According to Section \ref{secreview}, the Griffiths-Pirola normal function $\gamma_P$ (defined up to sign on $\mathcal{M}_4$), has an infinitesimal invariant
\begin{eqnarray}\label{eqinfinv2mai} \delta\gamma_{P,C}\in I_{1,V}\cong (V^*\otimes V_2)/V\end{eqnarray}
at $[C]\in\mathcal{M}_4$ (which is well-defined up to sign).
\begin{prop}\label{propcarreeqgalcarre} Let $C$ be a general curve of genus $4$ and let $\delta\gamma_{P,C}\in I_{1,V}$ be the infinitesimal invariant (\ref{eqinfinv2mai}). Then, if an element
$\overline{\phi}\in I_{1,V}$ satisfies
$$\overline{M}_{2,V}(\overline{\phi})=\lambda \,\overline{M}_{2,V}(\delta\gamma_{P,C}) \,\,{\rm in}\,\,I_{2,V},$$
one has either
\begin{eqnarray}\label{case1} \overline{M}_{2,V}(\overline{\phi})=0\,\, {\rm and}\,\,\lambda=0
\end{eqnarray}
or
\begin{eqnarray}\label{case2} \overline{\phi}= \sqrt{\lambda}\,\delta\gamma_{P,C}
\end{eqnarray}
for some square root $\sqrt{\lambda}$ of $\lambda$.
\end{prop}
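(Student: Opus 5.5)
If $\lambda=0$, the hypothesis already reads $\overline{M}_{2,V}(\overline{\phi})=0$, which is (\ref{case1}). So I assume $\lambda\neq 0$ and aim for (\ref{case2}). The plan is to localize the equation on the Grassmannian $G=G(2,V)$, exactly as in the proof of Proposition \ref{propcarrenul}(ii).

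Applying $r'_2$ and the factorization (\ref{eqfcatorization1}), I get on $G^0$
$$m'_2(r'_1(\overline{\phi}))=\lambda\, m'_2(r'_1(\delta\gamma_{P,C})).$$
Write $A:=r'_1(\overline{\phi})$ and $B:=r'_1(\delta\gamma_{P,C})$. Since $g=4$, both $\mathcal{S}$ and ${\rm Sym}^2\mathcal{E}/q$ have rank $2$ on $G^0$. Hence $A$ and $B$ are pointwise $2\times 2$ matrices, $m'_2$ is the determinant with values in the line bundle $\mathcal{H}om(\bigwedge^2\mathcal{S},\bigwedge^2({\rm Sym}^2\mathcal{E}/q))$, and the equation becomes $\det A=\lambda\det B$ pointwise.

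By Theorem \ref{theoverni}, $\overline{M}_{2,V}(\delta\gamma_{P,C})\neq 0$, so I first check that $\det B\not\equiv 0$ on $G^0$. This is plausibly via the injectivity-type argument of Lemma \ref{le1pourcar0prime} applied to $r'_2$ on the image of $\overline{M}_{2,V}$, or directly from Theorem \ref{theovernicubic}, since $B$ is explicitly given by the cubic $f$ defining $C\subset Q$ restricted to lines. Then $N:=B^{-1}A$ is a rational section of $\mathcal{E}nd(\mathcal{S})$ on $G$. It is regular off the divisor $\Delta=\{\det B=0\}$ and satisfies $\det N=\lambda$, a nonzero constant.

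The key step, and the main obstacle, is to show that $N$ is a constant scalar $c\,{\rm Id}$. Once this holds, $c^2=\lambda$ and $r'_1(\overline{\phi}-c\,\delta\gamma_{P,C})=0$, and Lemma \ref{le1pourcar0prime} gives $\overline{\phi}=\sqrt{\lambda}\,\delta\gamma_{P,C}$.

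To prove $N$ is scalar I would first write $N={\rm adj}(B)A/\det B$. The numerator is a global section of $\mathcal{E}nd(\mathcal{S})\otimes\mathcal{L}$, where $\mathcal{L}$ is the (known, explicitly computable) Plücker twist carried by $\det B$; using the codimension $3$ of $G\setminus G^0$, this extends to $G$. The condition $\det N=\lambda$ then says $\det({\rm adj}(B)A)=\lambda(\det B)^2$. I would use the explicit form of $B$ from the cubic $f$: on a plane $S$, i.e. a line in $\mathbb{P}(V^*)$, $B$ encodes $f$ modulo $q$ restricted to the line. From this I would show that $\Delta$ is irreducible and reduced (true for general $C$). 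Then ${\rm tr}\,N$, which is a rational function with at most simple poles along the irreducible $\Delta$ and with $\det N$ constant, must either be regular, hence constant, or force a pole of $\det N$, a contradiction.

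With ${\rm tr}\,N$ and $\det N$ constant, the eigenvalues of $N$ are constant. Two cases remain. If $N$ has distinct eigenvalues, its eigenline bundles give a splitting of $\mathcal{S}$ over $G$ (or a double cover, which would be trivial since $G$ is simply connected). This is impossible because $\mathcal{S}$ is indecomposable. If $N$ has a repeated eigenvalue $c$, then $N-c$ is nilpotent, and a nonzero nilpotent would give a sub line bundle $\ker(N-c)\subset\mathcal{S}$ with a map $\mathcal{S}/\ker\to\ker$, again excluded by the Plücker-degree argument of Lemma \ref{le2pourcar0}. Hence $N=c\,{\rm Id}$, which concludes as explained above.
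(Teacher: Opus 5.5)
There is a genuine gap at exactly the step you flag as the main obstacle. Nothing in your argument shows that $N=B^{-1}A$ is regular along $\Delta=\{\det B=0\}$, and everything after that point needs it.

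The trace argument is not valid. Having at most simple poles along $\Delta$ and $\det N$ constant does not force ${\rm tr}\,N$ to be regular. Locally, with $\Delta=\{h=0\}$, take $B={\rm diag}(1,h)$ and $A=\begin{pmatrix}\lambda h+ab & a\\ b & 1\end{pmatrix}$. Both are regular and $\det A=\lambda\det B$, yet $B^{-1}A$ has the entry $1/h$ and ${\rm tr}(B^{-1}A)=\lambda h+ab+1/h$. Globally, ${\rm tr}\,N$ is the quotient of ${\rm tr}({\rm adj}(B)A)\in H^0(G,\mathcal{O}_G(4))$ by $\det B$, and nothing you use forces the numerator to be a multiple of $\det B$.

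The eigenvalue discussion also silently assumes regularity. If $N$ has poles along $\Delta$, the eigenline sheaves only give an injection $\mathcal{O}_G(a_1)\oplus\mathcal{O}_G(a_2)\hookrightarrow\mathcal{S}$ whose cokernel is supported on $\Delta$. The nilpotent case only gives a map $\mathcal{S}/L\rightarrow L(k\Delta)$. Neither contradicts indecomposability of $\mathcal{S}$ or the Pl\"ucker degree count. Conversely, if you did know $N$ is regular, you would be done at once, since $H^0(G,\mathcal{E}nd(\mathcal{S}))=\mathbb{C}\,{\rm Id}$ (as used in Lemma~\ref{le1pourcar0}). So the proof reduces entirely to this unproved regularity. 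A symptom is that you never use the generality of $C$, except for the (also unproved) irreducibility and reducedness of $\Delta$. A correct argument must exploit that the cubic $t$ has no symmetries.

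The paper avoids the degeneracy divisor by restricting not to $G^0$ but to its complement: the two rulings $P_1,P_2$ of lines contained in $Q$, where the target becomes ${\rm Sym}^2\mathcal{E}_i$ of rank $3$. There, Lemma~\ref{lerank2} (which uses generality of $t$) shows $\alpha_i(\delta\gamma_{P,C})$ is injective at every point. So equality of the $\wedge^2$'s gives $\alpha_i(\overline{\phi})=\sqrt{\lambda}\,\alpha_i(\delta\gamma_{P,C})\circ\psi_i$, with $\psi_i$ a genuine automorphism of $\mathcal{S}_i\cong\mathcal{O}_{\mathbb{P}^1}(-1)^{\oplus 2}$, hence a constant element of ${\rm SL}(2)$. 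The compatibility $\eta_1\circ\alpha_1=\eta_2\circ\alpha_2$ then yields $\psi_1^*t=\psi_2^*t$. The absence of nontrivial automorphisms of $Q\cong P_1\times P_2$ fixing a generic cubic kills $\psi_1,\psi_2$, and the injectivity in Lemma~\ref{leinji12} concludes. To salvage your route, you would need an input of this kind, using the specific shape of $A$ and $B$ and the generality of $t$, to rule out poles of $B^{-1}A$ along $\Delta$.
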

\begin{rema}{\rm The meaning of Proposition \ref{propcarreeqgalcarre} is that the quadratic rational map
$$\overline{M}_{2,V}: \mathbb{P}(I_{1,V})\dashrightarrow \mathbb{P}(I_{2,V})$$  is, away from its indeterminacy locus, $1$-to-$1$ on its image over the   points of the form $\overline{M}_{2,V}(\delta\gamma_{P,C})$, with $C$ general.}
\end{rema}
The proof will use several lemmas.  We are going to use for this proof the lines contained in the quadric $Q=\{q=0\}$. There are two such families of lines parameterized by $P_1\cong \mathbb{P}^1,\,P_2\cong \mathbb{P}^1$. For each line $\Delta\subset Q$, there is
a corresponding vector subspace $S_\Delta\subset V$ of dimension $2$, and a quotient $E_\Delta$ of $V$ of dimension $2$, with
$$S_\Delta:=I_\Delta(1),\,E_\Delta=H^0(\Delta,\mathcal{O}_\Delta(1)).$$
We thus get two vector bundles $\mathcal{S}_i,\,\mathcal{E}_i$ on $P_i$, $i=1,\,2$, which are restricted from the Grassmannian $G(2,V)$. As we are considering lines contained in $Q$, on which $q=0$,  the construction in the previous section gives for $i=1,\,2$  linear maps
\begin{eqnarray}\label{eqrest1} \alpha_i: I_{1,V}\rightarrow H^0(P_i,\mathcal{H}om(\mathcal{S}_i,{\rm Sym}^2\mathcal{E}_i))
\end{eqnarray}
and linear maps
\begin{eqnarray}\label{eqrest2} \beta_i: I_{2,V}\rightarrow H^0(P_i,\mathcal{H}om(\bigwedge^2\mathcal{S}_i,\bigwedge^2({\rm Sym}^2\mathcal{E}_i))),
\end{eqnarray}
such that the following formula holds for any $\overline{\phi}\in I_{1,V}$:
\begin{eqnarray}\label{eqrestcarrre} \beta_i(\overline{M}_{2,V}(\overline{\phi}))=\wedge^2(\alpha_i(\overline{\phi}))\,\,{\rm in}\,\,H^0(P_i,\mathcal{H}om(\bigwedge^2\mathcal{S}_i,\bigwedge^2({\rm Sym}^2\mathcal{E}_i))).
\end{eqnarray}

We now have the following result.
\begin{lemm}\label{lerank2} If $C$ is a general curve of genus $4$, both morphisms  $$\alpha_i(\delta \gamma_{P,C})\in H^0(P_i,\mathcal{H}om(\mathcal{S}_i,{\rm Sym}^2\mathcal{E}_i)),  \,\,i=1,\,2,$$ have rank $2$ everywhere on $P_i$.
\end{lemm}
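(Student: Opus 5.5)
By Theorem \ref{theovernicubic}, up to a nonzero scalar and the sign ambiguity, $\delta\gamma_{P,C}$ is the cubic form $f\in H^0(Q,\mathcal{O}_Q(3))\subset I_{1,V}$ cutting out $C$ on $Q$. The plan is therefore to compute $\alpha_i(f)$ explicitly on a line $\Delta\subset Q$ and check that it has rank $2$ for every $\Delta$.

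\textbf{Step 1: make the embedding explicit.} We need the embedding $H^0(Q,\mathcal{O}_Q(3))\hookrightarrow (V^*\otimes V_2)/V$. A cubic $f\in{\rm Sym}^3V$, taken modulo $q\cdot V$, gives the element of ${\rm Hom}(V^*,{\rm Sym}^2V)$ obtained by contraction (derivation), $\partial\mapsto \partial f$. To land in $V^*\otimes V_2$ we identify $V^*\cong V$ via the nondegenerate form $q$. Under this convention the lift $\phi_f\in V^*\otimes V_2$ sends $v\in V$ to $\partial_{q^{-1}(v)}f$ modulo $q$. Changing $f$ by $q\cdot\ell$ changes $\phi_f$ by a multiplication map $\mu_\ell$ plus a multiple of $q$, so $\phi_f$ is well defined in $I_{1,V}$. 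The precise normalization is the one in \cite{verni}, and it only affects scalars.

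\textbf{Step 2: restrict to a line.} Fix $\Delta\subset Q$ with $S=S_\Delta=I_\Delta(1)$ and $E=E_\Delta$. The key observation is that $q$-orthogonality sends $S$ to itself: $\Delta$ is isotropic, so $S^{\perp_q}=S$. Hence for $s\in S$ the derivation $\partial_{q^{-1}(s)}$ is a derivation along a direction of $\Delta$ itself. The map $\alpha_i(f)_\Delta:S\to{\rm Sym}^2E=H^0(\Delta,\mathcal{O}(2))$ is then, up to scalar, $s\mapsto \partial_{t_s}(f_{\mid\Delta})$, where $t_s$ runs over the $2$-dimensional space of tangent directions of the line.

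In coordinates $x_0,x_1$ on $\Delta$, the map becomes $(\partial_{x_0}F,\partial_{x_1}F)$, where $F=f_{\mid\Delta}\in H^0(\mathcal{O}_\Delta(3))$. The image of $q$ in ${\rm Sym}^2E$ is $0$ because $\Delta\subset Q$, so no quotient by $q$ appears, which is consistent with the definition of $\alpha_i$. Therefore:
\begin{itemize}
\item the rank is $2$ exactly when $\partial_0F$ and $\partial_1F$ are linearly independent;
\item by Euler's relation, this fails if and only if $F\equiv0$ or $F$ is a cube of a linear form, i.e. $\Delta$ meets $C$ in a single point with multiplicity $3$, or $\Delta\subset C$.
\end{itemize}

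\textbf{Step 3: a dimension count for general $C$.} We must show that for general $C$ no ruling line is a total inflection line, i.e. no $\Delta$ satisfies $F=l^3$. The condition $\Delta\subset C$ never occurs for a smooth curve. Consider the incidence variety of pairs $(\Delta,f)$ with $f_{\mid\Delta}$ a cube. For fixed $\Delta$, the restriction map to $H^0(\mathcal{O}_\Delta(3))$ (dimension $4$) is surjective, and cubes form a $2$-dimensional cone. The condition is thus codimension $2$ in $f$. With $\Delta$ varying in the $1$-dimensional family $P_i$, the bad $f$ form a codimension $\geq1$ subset, so general $C$ avoids it.

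\textbf{Main obstacle.} I expect the hard part to be Step 1 together with the beginning of Step 2: verifying that Verni's identification really takes the cubic to this derivative map via $q^{-1}$, and that the restriction $r_S$ is compatible with the isotropy $S^\perp=S$. If instead the relevant directions were transverse to $\Delta$, the answer would involve the normal derivatives of $f$. In that case I would redo the computation in explicit coordinates $q=x_0x_3-x_1x_2$ with $\Delta=\{x_2=x_3=0\}$.
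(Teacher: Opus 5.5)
Your proposal is correct and is essentially the paper's proof. The paper also identifies $\delta\gamma_{P,C}$ with $(\partial t)_{\mid Q}$ modulo $V$, using $V\cong V^*$ via $q$. It then observes that this identification sends $I_\Delta(1)$ to $H^0(\Delta,T_\Delta(-1))$, which is your isotropy remark $S^{\perp_q}=S$. It concludes from the fact that binary cubics with a vanishing partial derivative, i.e.\ cubes $l^3$, form a codimension-$2$ subset of $H^0(\Delta,\mathcal{O}_\Delta(3))$; your incidence count over the $1$-dimensional family $P_i$ just makes that last step explicit.

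The normalization issue you flag as the main obstacle is handled in the paper by noting that $t\mapsto(\partial t)_{\mid Q}$ mod $V$ is well defined and is the unique $O(Q)$-equivariant inclusion of $H^0(Q,\mathcal{O}_Q(3))$ into $I_{1,V}$. So any discrepancy with Verni's convention is only a scalar and cannot affect the rank.
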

\begin{proof} Let $C\subset Q\subset \mathbb{P}^3$ be canonically embedded and let $t\in H^0(\mathbb{P}^3,\mathcal{O}_{\mathbb{P}^3}(3))$ be a cubic equation defining $C$ inside $Q$.
 We know by  Theorem \ref{theovernicubic} (see also \cite[Section 4.2]{verni} for more detail)  that $\delta\gamma_{P,C}\in I_{1,V}$ is given by the restrictions to $Q$ of the  partial derivatives of $t$, that is, the image $(\partial t)_{\mid Q}$ of $t$ in $V\otimes V_2$,  seen as an element of $V^*\otimes V_2$ thanks to the isomorphism $V\cong V^*$ given by $q$. Indeed,  if we change $t$ to $t+aq$, for some $a\in H^0(\mathbb{P}^3,\mathcal{O}_{\mathbb{P}^3}(1))$, then $(\partial t)_{\mid Q}$ is changed to

 $$(\partial (t+aq))_{\mid Q}=(\partial t)_{\mid Q}+a (\partial q)_{\mid Q},$$
 hence we get the same element in $I_{1,V}=(V^*\otimes V_2)/V$. Hence, $$t\mapsto \partial{t}_{\mid Q}\,\,{\rm mod.}\,\, V$$ provides a well defined $O(Q)$-equivariant inclusion of $H^0(Q,\mathcal{O}_Q(3))$ in $I_{1,V}=(V^*\otimes V_2)/V$, which is in fact unique.

 If $\Delta\subset Q$ is a member of the $i$-th rulling, then via the isomorphism $V\cong V^*$ given by $q$, the ideal $I_\Delta(1)$ is mapped to the space of  sections $H^0(\Delta,T_\Delta(-1))$. It follows that, for any point $[\Delta]\in P_i$,  $$\alpha_i(\delta\gamma_{P,C})_{\mid [\Delta]}\in {\rm Hom}(I_\Delta(1),H^0(\Delta,\mathcal{O}_\Delta(2)))$$
 identifies with the morphism
 $$H^0(\Delta,T_\Delta(-1))\rightarrow H^0(\Delta,\mathcal{O}_\Delta(2))$$
 given by the partial derivatives of $t_{\mid\Delta}$. The lemma then follows from the fact that the set of cubic polynomials on $\Delta$ with one vanishing partial derivative is a closed algebraic subset of  codimension $2$ in $H^0(\Delta,\mathcal{O}_\Delta(3))$.
\end{proof}
We will also need the following
\begin{lemm} \label{leinji12} The restriction map
$$(\alpha_1,\alpha_2): I_{1,V}\rightarrow H^0(P_1,\mathcal{H}om(\mathcal{S}_1,{\rm Sym}^2\mathcal{E}_1))\bigoplus H^0(P_2,\mathcal{H}om(\mathcal{S}_2,{\rm Sym}^2\mathcal{E}_2))$$ is injective, and its cokernel is isomorphic to $H^0(Q,\mathcal{O}_Q(3))$.
\end{lemm}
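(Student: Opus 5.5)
The plan is to identify everything explicitly as representations of ${\rm SL}(A)\times{\rm SL}(U)$ and then use Schur's lemma. Here $A,U$ are the $2$-dimensional spaces with $Q\cong\mathbb{P}(A^*)\times\mathbb{P}(U^*)=P_2\times P_1$, so that $V=H^0(Q,\mathcal{O}_Q(1,1))=A\otimes U$, and $q$ corresponds to the identification $V\cong V^*$ given by $\det A\otimes\det U$.

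First I compute the bundles on $P_1$. A line $\Delta$ of the first ruling is $\{p\}\times\mathbb{P}(U^*)$ with $p\in P_1\cong\mathbb{P}(A^*)$. Then
$$S_\Delta=I_\Delta(1)=\langle a_p\rangle\otimes U,\qquad E_\Delta=(A/\langle a_p\rangle)\otimes U,$$
where $a_p\in A$ vanishes at $p$. Hence
$$\mathcal{S}_1\cong\mathcal{O}_{P_1}(-1)\otimes U,\qquad \mathcal{E}_1\cong\mathcal{O}_{P_1}(1)\otimes U,\qquad \mathcal{H}om(\mathcal{S}_1,{\rm Sym}^2\mathcal{E}_1)\cong\mathcal{O}_{P_1}(3)\otimes U^*\otimes{\rm Sym}^2U.$$
Taking global sections gives
$$H^0(P_1,\mathcal{H}om(\mathcal{S}_1,{\rm Sym}^2\mathcal{E}_1))\cong{\rm Sym}^3A\otimes({\rm Sym}^3U\oplus U)=H^0(Q,\mathcal{O}_Q(3,3))\oplus({\rm Sym}^3A\otimes U),$$
using $U^*\cong U$. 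Symmetrically, the $P_2$ term is $H^0(\mathcal{O}_Q(3,3))\oplus(A\otimes{\rm Sym}^3U)$. The target therefore has dimension $48$, while $I_{1,V}$ has dimension $36-4=32$. This is consistent with the claimed cokernel $H^0(Q,\mathcal{O}_Q(3))$, which has dimension $16$.

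Next I decompose the source. As recalled before Theorem \ref{theovernicubic},
$$I_{1,V}=H^0(Q,\mathcal{O}_Q(3))\oplus H^0(Q,T_Q(1)),\qquad H^0(Q,T_Q(1))=H^0(\mathcal{O}_Q(3,1))\oplus H^0(\mathcal{O}_Q(1,3))\cong({\rm Sym}^3A\otimes U)\oplus(A\otimes{\rm Sym}^3U).$$
These summands are pairwise non-isomorphic irreducibles, except that $H^0(\mathcal{O}_Q(3,3))$ occurs once in the source and twice in the target. The maps $\alpha_1,\alpha_2$ are equivariant, since they are constructed canonically from $V$, $q$ and the rulings. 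So by Schur's lemma:
\begin{itemize}
\item $H^0(\mathcal{O}_Q(3))$ maps into the two $H^0(\mathcal{O}_Q(3,3))$ summands by scalars $(c_1,c_2)$;
\item $H^0(\mathcal{O}_Q(3,1))$ maps to ${\rm Sym}^3A\otimes U$ in the $P_1$ term by a scalar $d_1$, and to zero in the $P_2$ term because there is no matching summand;
\item similarly $H^0(\mathcal{O}_Q(1,3))$ maps by a scalar $d_2$ to the $P_2$ term.
\end{itemize}
Injectivity, and the identification of the cokernel with one copy of $H^0(\mathcal{O}_Q(3,3))=H^0(Q,\mathcal{O}_Q(3))$ (the anti-diagonal), then follow once $c_1,c_2,d_1,d_2$ are all nonzero.

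The main obstacle is showing these four scalars are nonzero. For $c_i$ I would reuse the computation in the proof of Lemma \ref{lerank2}. There, $\alpha_i$ of the class of a cubic $t$ is, at $[\Delta]$, the map $H^0(\Delta,T_\Delta(-1))\to H^0(\Delta,\mathcal{O}_\Delta(2))$ given by the partial derivatives of $t_{\mid\Delta}$. This is nonzero whenever $t_{\mid\Delta}\neq0$, so $c_i\neq0$; the swap of rulings by an element of $O(q)$ exchanging $P_1,P_2$ even gives $c_1=\pm c_2$.

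For $d_i$, it suffices to exhibit a single explicit element of $H^0(\mathcal{O}_Q(3,1))\subset I_{1,V}$ and evaluate $\alpha_1$ on it at one line. Alternatively, I would argue by contradiction in the style of Lemma \ref{le1pourcar0prime}. If $d_1=0$, then $d_2=0$ by the ruling swap, and the whole of $H^0(Q,T_Q(1))$ is killed by both restrictions. Then $\overline{\phi}\in H^0(T_Q(1))$ has $r'_1(\overline{\phi})$ vanishing along every line of $Q$. I would then pick a rank-one-type element, or use irreducibility of $H^0(T_Q(1))$ under $O(q)$, to produce $\overline{\phi}\neq0$ whose restriction to a general line of $G$ is forced to vanish too. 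That would contradict Lemma \ref{le1pourcar0prime}. The explicit check on one element is what I would try first.
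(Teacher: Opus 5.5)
\paragraph{Verdict: a genuine gap.} Your bookkeeping is correct, and your route differs genuinely from the paper's. As $\mathrm{SL}(A)\times\mathrm{SL}(U)$-modules the source is $(3,3)\oplus(3,1)\oplus(1,3)$ and the target is $2(3,3)\oplus(3,1)\oplus(1,3)$. By Schur, the lemma is therefore equivalent to $(c_1,c_2)\neq(0,0)$ together with $d_1,d_2\neq 0$. Once these hold, the cokernel statement follows without the maps $\eta_i$ and without the surjectivity of $\eta_1-\eta_2$ used in the paper's dimension count. Your argument for $c_i\neq 0$, via the computation in the proof of Lemma \ref{lerank2}, is fine.

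However, you never prove $d_1\neq 0$. That is exactly injectivity on the summand $H^0(Q,T_Q(1))$, which is the real content of the lemma. Your fallback cannot work. Knowing that $r'_1(\overline{\phi})$ vanishes at the points of the two conics $P_1\sqcup P_2\subset G(2,V)$ says nothing about a general point of $G$. No symmetry can transport it there either, since $O(q)$ preserves $Q$ and hence the locus of lines contained in $Q$. Lemma \ref{le1pourcar0prime} needs vanishing on all of $G^0$, so no contradiction arises.

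\paragraph{Filling the gap directly.} Your first suggestion works with a short computation. Write $V_2=\mathrm{Sym}^2A\otimes\mathrm{Sym}^2U$. For $f\in\mathrm{Sym}^3A$ and $u\in U$, set $\phi_{f,u}(a\otimes u')=\partial_a f\otimes uu'$, where $\partial_a f\in\mathrm{Sym}^2A$ is the contraction of $f$ with $\omega_A(a,\cdot)\in A^*$. This map is equivariant and nonzero, and $(3,1)$ has multiplicity one, so the $\phi_{f,u}$ span the $(3,1)$-summand. Now take the line $\Delta_p$ with $p=[\xi_p]\in\mathbb{P}(A^*)$. Then $S_{\Delta_p}=a_p\otimes U$ with $\xi_p(a_p)=0$, so $\omega_A(a_p,\cdot)$ is a multiple of $\xi_p$. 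Also, $\mathrm{Sym}^2A\to\mathrm{Sym}^2(A/a_p)$ is evaluation at $\xi_p$. Hence $\alpha_1(\phi_{f,u})$ at $[\Delta_p]$ sends $a_p\otimes u'$ to a nonzero multiple of $f(\xi_p)\,uu'$ (Euler's formula), which is nonzero when $f(\xi_p)\neq 0$. The ruling swap then gives $d_2\neq 0$.

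\paragraph{Alternative: the paper's argument.} The paper proves injectivity directly, with no representation theory. Suppose $\alpha_1(\overline{\phi})=\alpha_2(\overline{\phi})=0$. At each $x\in Q$, let $\Delta,\Delta'$ be the two lines through $x$. Then $\phi$ kills $I_\Delta(1)+I_{\Delta'}(1)=I_x(1)$ at $x$. So $\phi$, viewed as a section of $V^*\otimes\mathcal{O}_Q(2)$, vanishes on $\mathcal{K}=\ker(V\otimes\mathcal{O}_Q\to\mathcal{O}_Q(1))$. It therefore comes from $H^0(Q,\mathcal{O}_Q(1))=V$, i.e.\ $\phi$ is a multiplication map and $\overline{\phi}=0$. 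Combined with your Schur decomposition, this also gives the cokernel statement immediately.
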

Before proving the lemma, let us explain how both spaces $H^0(P_i,\mathcal{H}om(\mathcal{S}_i,{\rm Sym}^2\mathcal{E}_i))$ map naturally to $H^0(Q,\mathcal{O}_Q(3))$. Let
$$\pi_i: Q\rightarrow P_i$$
be the natural morphisms (note that $Q\cong P_1\times P_2$ via $(\pi_1,\pi_2)$).
We have
\begin{eqnarray}\label{eqdun} H^0(P_i, \mathcal{H}om(\mathcal{S}_i,{\rm Sym}^2\mathcal{E}_i))=H^0(Q,\pi_i^*\mathcal{S}_i^*\otimes \mathcal{O}_Q(2)).
\end{eqnarray}
We now note that  we have an injection
\begin{eqnarray}\label{eqdedeux}j_i:\mathcal{O}_Q(-1)\hookrightarrow \pi_i^*\mathcal{S}_i
\end{eqnarray}
given by the Gauss map of $Q$, which produces at each point $x\in Q$  a linear form vanishing at $x$ and along the two lines in $Q$ passing through $x$.
Combining (\ref{eqdun}) and (\ref{eqdedeux}) gives us the desired maps
\begin{eqnarray}\label{eqetai} \eta_i: H^0(P_i,\mathcal{H}om(\mathcal{S}_i,{\rm Sym}^2\mathcal{E}_i))\rightarrow H^0(Q,\mathcal{O}_Q(3)).\end{eqnarray}
As we have the obvious equality of morphisms
$$j'_1\circ j_1=j'_2\circ j_2: \mathcal{O}_Q(-1)\rightarrow V^*\otimes \mathcal{O}_Q,$$
where $j'_i:\pi_i^*\mathcal{S}_i\hookrightarrow V^*\otimes \mathcal{O}_Q$, $i=1,\,2$,  are the natural inclusions, one gets that, for any  $\overline{\phi}\in I_{1,V}$,
\begin{eqnarray}\label{eqeta1eta22105} \eta_1(\alpha_1(\overline{\phi}))=\eta_2(\alpha_2(\overline{\phi}))\,\,{\rm in}\,\,H^0(Q,\mathcal{O}_Q(3)).\end{eqnarray}

\begin{proof}[Proof of Lemma \ref{leinji12}] Let $\phi\in {\rm Hom}(V,V_2)$ with image $\overline{\phi}\in I_{1,V}$ and assume that $\alpha_i(\overline{\phi})=0$ in $H^0(P_i, \mathcal{H}om(\mathcal{S}_i,{\rm Sym}^2\mathcal{E}_i))$ for $i=1,\,2$. Then, defining the vector bundle $\mathcal{K}$ on $Q$ as the kernel of the evaluation map
${\rm ev}: V\otimes \mathcal{O}_Q\rightarrow \mathcal{O}_Q(1)$,
 the image of $\phi$ in $H^0(Q,V^*\otimes \mathcal{O}_Q(2))=V^*\otimes V_2$ vanishes in
$H^0(Q, \mathcal{K}^*(2))$, since by our assumption,  at any point $x$ of $Q$, it vanishes in ${\rm Hom}(I_\Delta,\mathcal{O}_x(2))$ and in ${\rm Hom}(I_{\Delta'},\mathcal{O}_x(2))$, where $\Delta$ and $\Delta'$ are the two lines contained in $Q$ and   passing through $x$, so that $I_x(1)=I_\Delta(1)\oplus I_{\Delta'}(1)$.
Dualizing the exact sequence
$$0\rightarrow \mathcal{K}\rightarrow V\otimes \mathcal{O}_Q\rightarrow \mathcal{O}_Q(1)\rightarrow 0,$$
we get that the image of $\phi$ in $H^0(Q,V^*\otimes \mathcal{O}_Q(2))$ comes from an element $\phi'$ of
$$H^0(Q,\mathcal{O}_Q(-1)\otimes \mathcal{O}_Q(2))=H^0(Q,\mathcal{O}_Q(1))=V.$$
Thus $\phi-\phi'$ vanishes in $H^0(Q,V^*\otimes \mathcal{O}_Q(2))$, or equivalently $\phi-\phi'=0$ in $V^*\otimes V_2$. As $\phi'\in V$, it follows that  $\overline{\phi}=0$ in $I_{1,V}=(V^*\otimes V_2)/V$, which proves the injectivity statement.
The other statement is then proved by  a dimension computation. Indeed, it is clear that \begin{eqnarray}\label{eqpourflecheeta}\eta_1-\eta_2:H^0(P_1,\mathcal{H}om(\mathcal{S}_1,{\rm Sym}^2\mathcal{E}_1))\bigoplus H^0(P_2,\mathcal{H}om(\mathcal{S}_2,{\rm Sym}^2\mathcal{E}_2))\rightarrow H^0(Q,\mathcal{O}_Q(3))\end{eqnarray} is surjective.
The right hand side in (\ref{eqpourflecheeta}) has dimension $16$ and the left hand side has dimension $48$.
  By (\ref{eqeta1eta22105}), the kernel of $\eta_1-\eta_2$ contains ${\rm Im}(\alpha_1,\alpha_2)$ which, as just proved, has dimension equal to ${\rm dim}\,((V^*\otimes V_2)/V)=32$. Hence the kernel of $\eta_1-\eta_2$ equals ${\rm Im}\,(\alpha_1,\alpha_2)$.
\end{proof}
\begin{proof}[Proof of Proposition \ref{propcarreeqgalcarre}] Let $C$ be a general curve of genus $4$, and let $\overline{\phi}\in I_{1,V}$ be such that
\begin{eqnarray}\label{eqducarregammaP} \overline{M}_{2,V}(\overline{\phi})=\lambda\, \overline{M}_{2,V}(\delta\gamma_{P,C}) \,\,{\rm in}\,\,I_{2,V}
\end{eqnarray}
for some coefficient $\lambda$. If $\lambda=0$, then (\ref{case1}) is satisfied, so we just have to study the case where $\lambda\not=0$.
We now use (\ref{eqrestcarrre}) and apply it to both $\overline{\phi}$ and $\delta\gamma_P$. We thus get from (\ref{eqducarregammaP}) that for $i=1,\,2$,

\begin{eqnarray}\label{eqducarregammaPnouveau} \wedge^2(\alpha_i(\overline{\phi}))=\lambda\,\wedge^2(\alpha_i(\delta\gamma_{P,C}))\,\,{\rm in}\,\,H^0(P_i,\mathcal{H}om(\bigwedge^2\mathcal{S}_i,\bigwedge^2({\rm Sym}^2\mathcal{E}_i))).
\end{eqnarray}
By Lemma \ref{lerank2}, $\alpha_i(\delta\gamma_{P,C})\in H^0(P_i,\mathcal{H}om(\mathcal{S}_i,{\rm Sym}^2\mathcal{E}_i))$ has rank $2$ everywhere, hence by (\ref{eqducarregammaPnouveau}), the same is true for the morphism $\alpha_i(\overline{\phi})$. Furthermore
 (\ref{eqducarregammaPnouveau}) implies  that the images of the two pointwise injective  morphisms $\alpha_i(\overline{\phi})\in H^0(P_i,\mathcal{H}om(\mathcal{S}_i,{\rm Sym}^2\mathcal{E}_i))$ and
$\alpha_i(\delta\gamma_P)$ are equal. Thus  we have, for $i=1,\,2$, a factorization
\begin{eqnarray}\label{eqaveclespsii}  \alpha_i(\overline{\phi})=\sqrt{\lambda} \,\alpha_i(\delta\gamma_{P,C})\circ \psi_i,
\end{eqnarray}
for some determinant $1$ automorphism $\psi_i$   of the vector bundle $\mathcal{S}_i$ on $P_i$. The vector bundle
$\mathcal{S}_1$ on $P_1\cong \mathbb{P}^1$ is isomorphic to $\mathcal{O}_{\mathbb{P}^1}(-1)\bigoplus \mathcal{O}_{\mathbb{P}^1}(-1)$, hence its automorphisms group is isomorphic to ${\rm GL}(2)$. The automorphism $\psi_1$ thus acts  on  $Q\cong P_1\times P_2$ via its action on $P_2$, and this action is linearized on $\mathcal{O}_Q(1)$, via the action of  $\psi_1$  on $H^0(P_1,\mathcal{S}_1^*)=V$. One can argue similarly with $P_1$ and $P_2$ exchanged.
 Using the maps $\eta_i$ from (\ref{eqetai}), we get  from the two  equations (\ref{eqaveclespsii}) the equalities
\begin{eqnarray} \label{equationfinale}\eta_1(\alpha_1(\overline{\phi}))
=\sqrt{\lambda}\,\psi_1^*(\eta_1(\alpha_1(\delta\gamma_{P,C}))) \,\,{\rm in}\,\,H^0(Q,\mathcal{O}_Q(3)), \\
\nonumber \eta_2(\alpha_2(\overline{\phi}))=\sqrt{\lambda}\,\psi_2^*(\eta_2(\alpha_2(\delta\gamma_{P,C})))\,\,{\rm in}\,\,H^0(Q,\mathcal{O}_Q(3)).
\end{eqnarray}
We now use (\ref{eqeta1eta22105}), which tells that
$$\eta_1(\alpha_1(\overline{\phi}))=\eta_2(\alpha_2(\overline{\phi})) \,\,{\rm in}\,\,H^0(Q,\mathcal{O}_Q(3)),$$
$$\eta_1(\alpha_1(\delta\gamma_{P,C}))=\eta_2(\alpha_2(\delta\gamma_{P,C}))\,\,{\rm in}\,\,H^0(Q,\mathcal{O}_Q(3)).$$
Combining these equalities with (\ref{equationfinale}), we conclude that the cubic $$t:=\eta_2(\alpha_2(\delta\gamma_{P,C}))\in H^0(Q,\mathcal{O}_Q(3))$$
satisfies $$
\psi_1^*t=\psi_2^*t,$$
which means that $t$ is invariant under the $\mathcal{O}(1)$-linearized automorphism $(\psi_1,\psi_2^{-1})$ of $Q\cong P_1\times P_2$. As the cubic polynomial $t$  is generic (it gives the equation of the curve $C$ by Theorem \ref{theovernicubic}), it is not fixed by a nontrivial automorphism of $Q$. Hence we conclude that $\psi_1={\rm Id}_{P_1},\,\psi_2={\rm Id}_{P_2}$, and so (\ref{eqaveclespsii}) becomes
$\alpha_i(\overline{\phi})=\sqrt{\lambda}\, \alpha_i(\delta\gamma_{P,C})$ for $i=1,\,2$. Thus we get by Lemma \ref{leinji12}
$$\overline{\phi}=\sqrt{\lambda}\,\delta\gamma_{P,C}$$ and (\ref{case2}) is satisfied, which concludes the proof.
\end{proof}
\subsection{Proof of Theorem \ref{theoquestionpirola}}
We complete in this section the proof of Theorem \ref{theoquestionpirola}. We first establish a slightly weaker     statement.
\begin{theo}\label{theoquestionpirolatexte} Let $\mathcal{C}\rightarrow  \mathcal{M}_4$ be the universal curve of genus $4$, and ${J}\rightarrow \mathcal{M}_4 $ the associated Jacobian fibration. Then all rational sections of the fibration $K=J/\pm {\rm Id}\rightarrow \mathcal{M}_4$ are given by  rational  multiples of the Griffiths-Pirola normal function $\gamma_P$.
\end{theo}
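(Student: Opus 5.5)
Let $\gamma$ be a rational section of $K\rightarrow\mathcal{M}_4$. I would pass to the degree $4$ cover $\mathcal{M}''_4$ of the Remark following Proposition \ref{profirststep}, over which both $\gamma'$ and $\gamma_P$ are honest sections of $J''\rightarrow\mathcal{M}''_4$ with associated divisors $D_{\gamma'}$, $D_{\gamma_P}$. By Proposition \ref{profirststep} there is $\lambda\in\mathbb{Q}$ with $D_{\gamma'}^2=\lambda D_{\gamma_P}^2$ in ${\rm CH}_0$ of the fibers, and Corollary \ref{corolecarrenul} gives $\overline{M}_{2,V}(\delta\gamma'(b))=\lambda\overline{M}_{2,V}(\delta\gamma_P(b))$ at a general $b$. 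Proposition \ref{propcarreeqgalcarre} then splits the argument into two cases.

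\emph{Case $\lambda=0$.} Then $D_{\gamma',b}^2=0$ in ${\rm CH}_0(C_b^{(2)})$ for general $b$, hence for very general $b$. Since $\mathcal{M}''_4\rightarrow\mathcal{M}_4$ is generically finite, $C_b$ is a very general genus $4$ curve, and Theorem \ref{theoabelian}(ii) shows $\gamma'(b)$ is torsion. A section that is torsion at very general points is torsion (the $N$-torsion loci are closed, and countably many of them cover), so $\gamma'=0$ after multiplying by some $N$; thus $\gamma$ is a torsion section, i.e. $0\cdot\gamma_P$ up to torsion. Since torsion sections of $J$ over $\mathcal{M}_4$ (or its covers relevant here) are killed by the monodromy argument of Lemma \ref{lepourlatorsion}(iii) (an $\mathrm{Sp}(8)$-invariant torsion vector is zero, and even on the double cover the monodromy has finite index in ${\rm Sp}(8)$, so only $0$ survives for a rational section of $K$), we get $\gamma=0$.

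\emph{Case $\lambda\neq 0$.} Then $\delta\gamma'(b)=\pm\sqrt{\lambda}\,\delta\gamma_P(b)$ at a general point. The sign is locally constant by continuity, and $\sqrt\lambda$ is a fixed constant. First I would argue $\sqrt{\lambda}$ is rational: consider $\nu:=N\gamma'-\mu\gamma_P$ for a rational approximation; instead, more cleanly, use that the normal functions $\gamma'$ and $\gamma_P$ satisfy $\delta(\gamma'\mp c\gamma_P)=0$ with $c=\sqrt\lambda$ real. I would then work with the lifts: by Lemma \ref{lepourlatorsion}'s argument (conditions (i),(ii) hold for $\mathcal{M}_4$, as used in Section \ref{secsec3ou41506}), local lifts $\tilde\nu'$, $\tilde\nu_P$ can be chosen with $\nabla(\tilde\nu'\mp c\tilde\nu_P)=0$, so $\tilde\nu'\mp c\tilde\nu_P$ is a flat section of $H^1_\mathbb{C}$ modulo the lattice, monodromy-invariant modulo $H^1_\mathbb{Z}$ and the ambiguity $c\,H^1_\mathbb{Z}$ coming from $\tilde\nu_P$. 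This is the main obstacle: if $c$ is irrational, the monodromy argument is applied to the $\mathbb{Z}$-module $H^1_\mathbb{Z}+cH^1_\mathbb{Z}$. I would handle it by noting that the periods of $\nu_P$ along monodromy $(\rho_g-1)\tilde\nu_{P}$ give a nonzero cocycle in $H^1_\mathbb{Z}$ (since $\gamma_P$ is non-torsion, by Theorem \ref{theoverni}), and the flat class relation forces $(\rho_g-1)\tilde\nu'=\pm c(\rho_g-1)\tilde\nu_P$ modulo nothing beyond rational lattices; comparing integral vectors, $c\in\mathbb{Q}$. Then with $c=p/q$, the normal function $q\gamma'\mp p\gamma_P$ has trivial infinitesimal invariant, hence is torsion by Lemma \ref{lepourlatorsion}, hence (as in the first case) zero after multiplication by an integer. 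This exhibits $\gamma$ as a rational multiple of $\gamma_P$ in $K$, proving the statement.
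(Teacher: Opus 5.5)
Your proposal is essentially the paper's proof: the same reduction through Proposition \ref{profirststep}, Corollary \ref{corolecarrenul} and Proposition \ref{propcarreeqgalcarre}, then Theorem \ref{theoabelian}(ii) plus a Baire argument when $\lambda=0$, and for $\lambda\neq0$ horizontality of $\tilde{\gamma}'\mp\sqrt{\lambda}\,\tilde{\gamma}_P$ together with rationality of $\sqrt{\lambda}$ via Griffiths' topological class (your monodromy defects $(\rho_g-1)\tilde{\nu}$ of real lifts are the cocycle description of that class, and you only do the two steps in the opposite order). Two corrections are needed: first, the comparison gives only $[\gamma']=\pm\sqrt{\lambda}\,[\gamma_P]$ in $H^1(\mathcal{M}''_4,H^1_{\mathbb{C}})=H^1(\mathcal{M}''_4,H^1_{\mathbb{Q}})\otimes\mathbb{C}$, i.e.\ up to coboundaries (which also makes your unjustified assumption that $\sqrt{\lambda}$ is real unnecessary), so you must invoke, as the paper does, that the class $[\gamma_P]\in H^1(\mathcal{M}''_4,H^1_{\mathbb{Q}})$ is nonzero, not merely that its cocycle is nonzero; second, in the case $\lambda=0$ your extra claim $\gamma=0$ is not needed for this statement and is wrongly justified, since finite-index subgroups of ${\rm Sp}(8,\mathbb{Z})$ (e.g.\ level-$N$ subgroups) fix all $N$-torsion classes, and the correct argument is Lemma \ref{letorsion4mai}.
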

The meaning of this statement is  that,  any  rational section $\gamma$ satisfies a relation \begin{eqnarray}\label{eqstarfinfin7mai} N\gamma=N'\gamma_P\end{eqnarray} as rational sections of $K$,  for some  integers $N$, $N'$ with $N\not=0$.
\begin{proof}[Proof of Theorem \ref{theoquestionpirolatexte}] By Proposition \ref{profirststep} and its Corollary \ref{corolecarrenul}, given a rational section $\gamma$ of ${K}\rightarrow \mathcal{M}_4$,  the infinitesimal invariant $\delta{\gamma}'_C\in I_{1,V}$ at a general point $[C]$ of $\mathcal{M}''_4$  of the corresponding  normal function ${\gamma}'$ defined on an adequate cover $\mathcal{M}''_4$ of $\mathcal{M}_4$, (on which  the normal function  $\gamma_P\in \Gamma(\mathcal{J}^\vee)$ is also defined), satisfies the hypotheses of Proposition \ref{propcarreeqgalcarre} for some rational number $\lambda$ for which (\ref{eq1ermai}) holds. It follows that either (\ref{case1}) or (\ref{case2}) is satisfied by $\overline{\phi}:=\delta{\gamma}'_C$. If (\ref{case1}) is satisfied, then the coefficient $\lambda$ vanishes, hence by (\ref{eq1ermai}) the divisor $D_\gamma\in {\rm Pic}^0(C)={\rm Pic}^0(C^{(2)})={\rm CH}^1(C^{(2)})$ satisfies
\begin{eqnarray}\label{eqDgammacarrevan} D_\gamma^2=0\,\,{\rm in}\,\,{\rm CH}_0(C^{(2)}).
\end{eqnarray}
By Theorem \ref{theoabelian}(ii), it follows that $D_\gamma$ is a torsion element of ${\rm Pic}^0(C)$ for very general $C$, hence by a Baire countability argument, there exists an integer $N\not=0$ such that (\ref{eqstarfinfin7mai}) is satisfied for any $C$,   with $N'=0$.

It remains to study what happens if (\ref{case2}) is satisfied. In this case, $\lambda\not=0$ and  the infinitesimal invariants
$\delta{\gamma}', \,\delta{\gamma}_P\in \Gamma((\mathcal{H}^{0,1}\otimes\Omega_{\mathcal{M}_4''})/{\rm Im}\overline{\nabla})$  of ${\gamma}'$ and   $\gamma_P$ respectively, satisfy the relation
\begin{eqnarray}\label{eqgammagammaPinf}
\delta{\gamma}'=\sqrt{\lambda}\, \delta{\gamma}_P.
\end{eqnarray}
Recall that these infinitesimal invariants are computed locally in the Euclidean topology by choosing local  lifts $\tilde{\gamma}',\,\tilde{\gamma}_P$ in $\Gamma(\mathcal{H}^1)$, and defining
$$\delta {\gamma}':=(\nabla \tilde{\gamma}')^{0,1} \,\,{\rm mod}\,\,\overline{\nabla}\mathcal{H}^{1,0},\,$$
$$\delta {\gamma}_P:=(\nabla \tilde{\gamma}_P)^{0,1} \,\,{\rm mod}\,\,\overline{\nabla}\mathcal{H}^{1,0}.$$
The local lifts are defined up to the addition of local sections of $\mathcal{H}^{1,0}$ and  $H^1_\mathbb{Z}$, making the definition above independent of the local lift.
From this description and (\ref{eqgammagammaPinf}), we conclude  that there exist local lifts  $\tilde{\gamma}'$, resp. $\tilde{\gamma}_P$ of $\gamma'$, resp. $\gamma_P$,  satisfying
\begin{eqnarray}\label{eqlocalliftsqrt} (\nabla( \tilde{\gamma}'-\sqrt{\lambda}\,\tilde{\gamma}_P))^{0,1}=0\,\,{\rm in} \,\, \Gamma(\mathcal{H}^{0,1}\otimes \Omega_{\mathcal{M}''_4}).
\end{eqnarray}
Arguing as in the proof of Lemma \ref{lepourlatorsion}, we find that the  local lifts $\tilde{\gamma}'-\sqrt{\lambda}\,\tilde{\gamma}_P$ satisfying (\ref{eqlocalliftsqrt}) are in fact horizontal, and  unique modulo  sections of the local system $H^1_{\mathbb{Q}(\sqrt{\lambda})}$. (Due to the coefficient $\sqrt{\lambda}$ in (\ref{eqlocalliftsqrt}), we need to enlarge the coefficients of the local system here.) This is equivalent to saying that
there  exists a global  section $$\nu\in \Gamma(\mathcal{M}''_4,H^1_\mathbb{C}/H^1_{\mathbb{Q}(\sqrt{\lambda})})$$ whose image in $
\Gamma(\mathcal{M}''_4,\mathcal{H}^{0,1}/H^1_{\mathbb{Q}(\sqrt{\lambda})})$ equals $\gamma'-\sqrt{\lambda}\,\gamma_P$. Arguing as in the proof of Lemma \ref{lepourlatorsion}, we get that $\nu=0$, so that
\begin{eqnarray}\label{eqfindu3mai} \gamma'-\sqrt{\lambda}\,\gamma_P=0\,\,{\rm in}\,\,  \Gamma(\mathcal{M}''_4,\mathcal{H}^{0,1}/H^1_{\mathbb{Q}(\sqrt{\lambda})}).
\end{eqnarray}
To conclude the proof of Theorem \ref{theoquestionpirolatexte}, we just need to prove  that $\sqrt{\lambda} $ is a rational number. This is done as follows: given a family $J\rightarrow B$ of abelian varieties and  a normal function $\nu\in \Gamma(B,\mathcal{J}^\vee)=\Gamma(B,\mathcal{H}^{0,1}/H^1_\mathbb{Z})$,  the class $$[\nu ] \in H^1(B, H^1_\mathbb{Z})$$
of $\nu$, introduced by Griffiths, is obtained by writing the exact sequence (\ref{eqexactaveczenbas})
$$0\rightarrow H^1_\mathbb{Z}\rightarrow  \mathcal{H}^{0,1}\rightarrow \mathcal{J}^\vee\rightarrow 0$$
and by applying to $\nu$ the associated long exact sequence.

If $J$ is the Jacobian fibration of a family of curves $\mathcal{C}\rightarrow B$ and  $\nu=\nu_D$ comes from a relative divisor $D\in{\rm Pic}^0(\mathcal{C}/B)$, this class is nothing but the class $[D]$ of the divisor $D$, at least with $\mathbb{Q}$-coefficients, for which we have $$H^1(B,H^1_\mathbb{Q})\subset H^2(\mathcal{C},\mathbb{Q})/H^2(B,\mathbb{Q}).$$ Coming back to our normal function $\gamma_P$ for  the family $\mathcal{C}''\rightarrow \mathcal{M}_4''$,  the class
$$[\gamma_P]\in H^2(\mathcal{C}'',\mathbb{Q})/H^2(\mathcal{M}_4'',\mathbb{Q})$$ does not vanish. Indeed, this would imply otherwise by  a standard argument that the normal function $\gamma_P$ is trivial.  The condition $\gamma'-\sqrt{\lambda}\,\gamma_P=0$ of (\ref{eqfindu3mai})  implies  that
the two  cohomology classes
$[\gamma']$ and  $[\gamma_P]$   satisfy  $$[\gamma']-\sqrt{\lambda}\,[\gamma_P]=0\,\,{\rm  in }\,\, H^2(\mathcal{C}'',\mathbb{Q}(\sqrt{\lambda}))/H^2(\mathcal{M}_4'',\mathbb{Q}(\sqrt{\lambda})).$$
As $[\gamma']$ and  $[\gamma_P]$ are rational, that is, belong to $H^2(\mathcal{C}'',\mathbb{Q})/H^2(\mathcal{M}_4'',\mathbb{Q})$, and $[\gamma_P]\not=0$, we get that
  $\sqrt{\lambda}$ is rational and the proof of Theorem \ref{theoquestionpirolatexte} is finished.
\end{proof}
The proof of Theorem \ref{theoquestionpirola} is then concluded by the following two lemmas.

\begin{lemm} Assumptions and notation being as before, the rational number $\lambda$ of (\ref{eqgammagammaPinf}) is an integer.
\end{lemm}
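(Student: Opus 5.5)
Throughout, write $\mu:=\sqrt{\lambda}$ for the rational number obtained at the end of the proof of Theorem \ref{theoquestionpirolatexte}, so that (\ref{eqfindu3mai}) reads $\gamma'=\mu\,\gamma_P$. What must be shown is that $\mu$ is an integer; then $\lambda=\mu^2$ is one too. The plan is to prove that $\gamma_P$ is \emph{primitive} in the group of rational sections of $J''\rightarrow\mathcal{M}''_4$ modulo torsion, and that $\gamma'=\mu\gamma_P$ holds integrally up to torsion.

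First I clean up the integral relation. Relation (\ref{eqfindu3mai}) only gives $\gamma'-\mu\gamma_P=0$ modulo $H^1_{\mathbb{Q}}$. Write $\mu=a/b$ with $\gcd(a,b)=1$ and $b>0$. Then $b\gamma'-a\gamma_P$ is a section of $\mathcal{J}^\vee$ lifting to a global section of $H^1_\mathbb{Q}/H^1_\mathbb{Z}$, hence it is a torsion section. Multiplying by its order $N$, and choosing Bézout coefficients $ua+vb=1$, we get
$$N\gamma_P=b\,(Nu\gamma'+Nv\gamma_P)$$
up to torsion. In particular the Griffiths class $[\gamma_P]\in H^1(\mathcal{M}''_4,H^1_\mathbb{Z})$ becomes divisible by $b$ modulo torsion. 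The same holds after pulling back along any map $T\rightarrow \mathcal{M}''_4$, with the same $b$ and with the class now taken in $H^1(T,H^1_\mathbb{Z})$. So it suffices to exhibit one test family on which the class of $\gamma_P$ is not divisible, modulo torsion, by any $b>1$.

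For the test family I would take a general pencil of $(3,3)$-curves on $Q\cong\mathbb{P}^1\times\mathbb{P}^1$. Let $S\rightarrow Q$ be the blow-up at the $18$ base points, $f:S\rightarrow T=\mathbb{P}^1$ the induced fibration, and restrict to the open set $T^0$ of smooth fibers; these are canonically embedded genus $4$ curves on $Q$. The exceptional curves $E_i$ are sections of $f$. On this family, $\gamma_P$ (after the double cover making the rulings distinguishable, which is trivial here since $Q$ is fixed) is represented by the divisor $D_P=\mathcal{O}_S(1,-1)$, which has degree $0$ on fibers. Its Griffiths class is the image of $[D_P]$ in $H^2(S^0,\mathbb{Z})$ modulo the subgroup generated by $f^*H^2(T^0)$ and the vertical classes. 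If $\gamma_P=b\,x$ modulo torsion, then, after multiplying by $N$, the relative divisor representing $x$ extends over $S$, and we obtain
$$N[D_P]=b[X]+k[F]+(\text{components of singular fibers})$$
in $H^2(S,\mathbb{Z})$, with $F$ the fiber class.

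Now intersect this relation with test classes. Against $E_i$ we have $D_P\cdot E_i=0$ and $F\cdot E_i=1$, while components of singular fibers meet a section in $0$ or $1$; choosing $E_i$ through smooth points of the singular fibers (a general pencil has irreducible nodal singular fibers), we get $k\equiv 0 \bmod b$. Against a ruling line $\ell$ we have $D_P\cdot\ell=\pm1$ and $F\cdot\ell=3$, which gives $\pm N\equiv 0\bmod b$. The expected main obstacle is exactly this torsion factor $N$: it must be removed. I would remove it by showing that the torsion sections of $J''$ over a big open set are trivial, using that the monodromy of the pencil is the full $\mathrm{Sp}(8,\mathbb{Z})$ (vanishing cycles of a Lefschetz pencil generate it), which fixes no nonzero element of $H^1_\mathbb{Q}/H^1_\mathbb{Z}$. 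This makes $N=1$ on the test family, forcing $b=1$, so $\mu\in\mathbb{Z}$ and $\lambda=\mu^2\in\mathbb{Z}$.
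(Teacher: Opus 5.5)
Your strategy is the paper's: restrict to a Lefschetz pencil of $(3,3)$-curves on a fixed smooth quadric, where $\gamma_P=(\delta_1-\delta_2)_{\mid C_t}$ exists without base change, and use that $\gamma_P$ is primitive there. Your intersection computation is correct and more explicit than the paper, which only asserts primitivity. You write $ND_P=bX+kF$ in $\mathrm{Pic}(S)$; pairing with $E_i$ gives $b\mid k$, and pairing with a general ruling line gives $b\mid N$.

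The gap is where you treat the pencil as a map $T^0\rightarrow\mathcal{M}''_4$. The cover $\mathcal{M}''_4$ dominates two double covers: the ruling cover $\widetilde{\mathcal{M}}_4$, and the cover $\mathcal{M}'_4=\mathcal{M}_4\times_K J$ attached to $\gamma$ itself. You only check that the first one splits over the pencil. Nothing in your argument shows that $\gamma_{\mid T^0}$ lifts to a section of the Jacobian fibration over $T^0$, rather than only over a connected double cover $T'\rightarrow T^0$. If it does not, then $\gamma'$ is not defined on your surface $S^0$. Neither are $x=u\gamma'+v\gamma_P$ or the torsion section $b\gamma'-a\gamma_P$. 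You would have to redo everything on $S^0\times_{T^0}T'$. There the preimage of a ruling line can be an irreducible double cover of $\ell$, so the key intersection number becomes $\pm 2$ and you only get $b\mid 2$. The monodromy argument for $N=1$ would also have to be redone over $T'$.

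This is exactly the claim the paper proves before invoking primitivity, and the missing idea is anti-invariance. Suppose $T'$ were connected with covering involution $i$. Then $\gamma'\circ i=-\gamma'$ by construction of $\mathcal{M}'_4$, whereas $\gamma_P\circ i=\gamma_P$ because $\gamma_P$ is already defined on $T^0$. Applying $i$ to $N(b\gamma'-a\gamma_P)=0$ and adding the two relations gives $2Na\,\gamma_P=0$ on $T'$, hence on $T^0$. But $a\neq 0$, since we are in case (\ref{case2}) and so $\lambda\neq 0$. And $\gamma_P$ is not torsion on the pencil, by your own computation: a relation $ND_P=kF$ is impossible, since pairing with $E_i$ gives $k=0$ and pairing with $\ell$ gives $N=0$. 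So $T'$ splits, the pencil lifts to $\mathcal{M}''_4$, and your argument then goes through.

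A minor slip: $N[\gamma_P]=b[x]$ does not make $[\gamma_P]$ divisible by $b$ modulo torsion when $\gcd(N,b)>1$. What your computation actually shows is $b\mid N$, which is why you genuinely need $N=1$.
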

\begin{proof} Let $\gamma$ be a rational section of $K\rightarrow\mathcal{M}_4$. We restrict $\gamma$ over a rational curve in $\mathcal{M}_4$ constructed as follows: let $Q\subset \mathbb{P}^3$ be a smooth quadric and let
$T_0,\,T_\infty \in H^0(Q,\mathcal{O}_Q(3))$ generate a Lefschetz pencil $\mathbb{P}^1=\langle T_0,\,T_\infty\rangle$. The smooth fibers $C_t:=\{T_t=0\}$ form a $1$-parameter family of genus $4$ curves, hence we get a curve $\mathbb{P}^1=B\supseteq B^0\subset \mathcal{M}_4$. The Lefschetz pencil provides a morphism
$$\pi_T:\widetilde{Q}\rightarrow B,$$
where $\widetilde{Q}$ is the quadric $Q$ blown-up along its base locus,
and we get a family
\begin{eqnarray}\label{eqfamilyffinfin}\pi_T^0:\widetilde{Q}^0\rightarrow B^0,\end{eqnarray}
by restricting $\pi_T$ over the regular locus $B^0$ of $ \pi_T$.

We observe now that on the family (\ref{eqfamilyffinfin}), the Griffiths-Pirola normal function is well-defined (there is no need to pass to a double cover), by ordering the  two rullings $\delta_1,\,\delta_2$ of $Q$, and by defining
$$\gamma_P(t)=(\delta_1-\delta_2)_{\mid \widetilde{Q}^0_t}\in{\rm Pic}^0(\widetilde{Q}^0_t)$$
for $t\in B^0$.
We claim  that the section $\gamma$ restricted to $B^0$ also lifts to a section of the  Jacobian fibration $J_{\pi_T}\rightarrow B^0$.
Indeed, the section $\gamma$ restricted to $B^0$ lifts in any case  to a section $\gamma'$ of the pulled-back Jacobian fibration
$J_{\pi_T}'\rightarrow {B'}^0$,  for some  double cover ${B'}^0\rightarrow B^0$.
On any irreducible  component of  this double cover, we have by Theorem \ref{theoquestionpirolatexte} that
$$\gamma'=\lambda\gamma_P. $$
On the other hand, we have
by construction of the double cover ${B'}^0$:
$$\gamma'\circ i=-\gamma',$$
where $i$ is the involution of ${B'}^0$ over $B^0$,
while $\gamma_P\circ i=\gamma_P$. Assuming the section $\gamma'$ is  not of torsion (equivalently is  nonzero by Lemma \ref{letorsion4mai} below), this is possible only if the double cover ${B'}^0$ has two components, so that $\gamma'$ is well defined on each component isomorphic to $B^0$. This proves the claim.
The two sections $\gamma_P $ and $\gamma'$ of the Jacobian fibration $J_{\pi_T}\rightarrow B^0$ have cohomology classes
$$[\gamma_P],\,[\gamma']\in H^2(\widetilde{Q}^0,\mathbb{Z})$$
which satisfy
the relation
$$[\gamma']=\lambda [\gamma_P],\,\in H^2(\widetilde{Q}^0,\mathbb{Q}).$$
That $\lambda$ is an integer then follows from the fact that  the class
$[\gamma_P]\in H^2(\widetilde{Q}_T^0,\mathbb{Z})$ is nonzero and  primitive (that is, not divisible by an integer $>1$) in $H^2(\widetilde{Q}_T^0,\mathbb{Z})$.
\end{proof}
\begin{lemm}\label{letorsion4mai} Any torsion section of the Kummer fibration $K\rightarrow \mathcal{M}_4$ is the zero section.
\end{lemm}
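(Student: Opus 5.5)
The strategy is to lift a torsion section of $K$ to a torsion section of $J$ over a double cover of $\mathcal{M}_4$, and then use the monodromy of the universal Jacobian to force it to be zero.

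Let $\gamma$ be a rational section of $K\rightarrow \mathcal{M}_4$ that is torsion. This means there is an integer $N\geq 1$ such that $N\gamma=0$ as rational sections of $K$. Note that multiplication by $N$ commutes with $\pm{\rm Id}$, so it descends to $K$. As in the construction preceding Proposition \ref{profirststep}, pass to the double cover $\mathcal{M}'_4=\mathcal{M}_4\times_K J$, on which $\gamma$ lifts to a rational section $\gamma'$ of $J'\rightarrow \mathcal{M}'_4$. From $N\gamma=0$ in $K$ we get $N\gamma'=\pm 0=0$ in $J'$, so $\gamma'$ is a rational section of $J'$ whose value at a general point $m$ is an $N$-torsion point of $J'_m=J(C_m)$.

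The torsion points of order dividing $N$ form a finite étale cover $J[N]\rightarrow \mathcal{M}_4$. Its fiber over $[C]$ is $H^1(C,\mathbb{Z}/N)$, or equivalently $H_1(C,\mathbb{Z}/N)$, and the monodromy action on it is the standard action of the mapping class group through ${\rm Sp}(2g,\mathbb{Z})$. The section $\gamma'$ gives a section of $J[N]$ over an irreducible component of $\mathcal{M}'_4$. That component is either $\mathcal{M}_4$ itself or a degree 2 cover of it. It is therefore a connected component of $J[N]$ whose degree over $\mathcal{M}_4$ is at most $2$, that is, a monodromy orbit of size at most $2$ in $H^1(C,\mathbb{Z}/N)$. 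Over a Zariski open set of $\mathcal{M}_4$ the fundamental group surjects onto the (orbifold) mapping class group. Hence the monodromy image is the full ${\rm Sp}(8,\mathbb{Z})$, which acts on $(\mathbb{Z}/N)^{8}$ through ${\rm Sp}(8,\mathbb{Z}/N)$.

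The remaining step, which I expect to be the main point to check, is the group-theoretic claim: every nonzero $x\in(\mathbb{Z}/N)^{2g}$ with $g=4$ has ${\rm Sp}(2g,\mathbb{Z})$-orbit of size greater than $2$. I would prove this by writing $x=d\,y$ with $y$ primitive of order $N/d>1$. Orbits of primitive vectors under ${\rm Sp}$ are transitive on primitive vectors of $(\mathbb{Z}/M)^{2g}$ with $M=N/d$, so the orbit of $x$ has size at least the number of primitive vectors of $(\mathbb{Z}/M)^{2g}$ for $M\geq 2$. That number is at least $2^{8}-1>2$. More directly, one can use transvections $T_v(x)=x+\langle v,x\rangle v$: choose $v$ with $\langle v,x\rangle\neq 0$ and vary $v$ to produce at least three distinct images.

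Hence $x=0$. So $\gamma'=0$, and therefore $\gamma=0$ as a rational section of $K$. The only subtlety is the passage from torsion in $K$ to torsion in $J$ (handled by the identity $N\gamma'=\pm0=0$) and making sure the monodromy group is large. For the latter I would invoke the same fact used in Lemma \ref{lepourlatorsion}(iii) and in Proposition \ref{lecructresgenpourpropourcrcutheoab}: the monodromy of the universal genus 4 Jacobian is all of ${\rm Sp}(8,\mathbb{Z})$.
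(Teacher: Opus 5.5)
Your proof is correct and follows essentially the same route as the paper. The paper also reads a torsion section of $K$ as a class $\gamma_t\in H^1(\mathcal{C}_t,\mathbb{Z}/N)$ with $\rho(\eta)(\gamma_t)=\pm\gamma_t$, that is, a monodromy orbit contained in $\{\pm\gamma_t\}$, and uses that the monodromy is the full symplectic group to force $\gamma_t=0$; you phrase this through the double cover $\mathcal{M}'_4$ and $J[N]$, and you additionally spell out the orbit count via transitivity of ${\rm Sp}$ on primitive vectors modulo $M$, which the paper leaves implicit.
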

\begin{proof} A torsion section of $K\rightarrow \mathcal{M}_4$ is  given  by local sections of the local system  $H^1_{\mathbb{Z}/N}$ on $\mathcal{M}_4$, that glue up to sign. Given a point $t\in \mathcal{M}_4$, this would provide equivalently an element $\gamma_t$ of  $H^1(\mathcal{C}_t,\mathbb{Z}/N\mathbb{Z})$ such that for any $\eta\in\pi_1(\mathcal{M}_4,t)$,
\begin{eqnarray}\label{eqfinifin4mai}\rho(\eta)(\gamma_t)=\pm \gamma_t.\end{eqnarray}
As the monodromy representation $\rho$ with integral coefficients has for image the full symplectic group of
$H^1(\mathcal{C}_t,\mathbb{Z})$ equipped with the intersection form,  condition (\ref{eqfinifin4mai}) implies that $\gamma_t=0$.
\end{proof}


\begin{thebibliography}{99}
\bibitem{bakker}  B. Bakker, J.  Tsimerman. Functional transcendence of periods and the geometric Andr\'{e}-Grothendieck period conjecture. Forum Math. Sigma 13 (2025), Paper No. e97.
\bibitem{baldi} G. Baldi, D. Urbanik. Effective atypical intersections and applications to orbit closures, arXiv:2406.16628.
\bibitem{beauville} A. Beauville. Sur l'anneau de Chow d'une vari\'{e}t\'{e} ab\'{e}lienne.  Math. Ann. 273 (1986), no. 4, 647-651.
\bibitem{beauville1} A. Beauville.  Quelques remarques sur la transformation de Fourier dans l'anneau de Chow d'une vari\'{e}t\'{e} ab\'{e}lienne.  {\it Algebraic geometry (Tokyo/Kyoto, 1982)}, 238-260,
Lecture Notes in Math., 1016, Springer, Berlin, (1983).
\bibitem{blochabelian} S. Bloch. Some elementary theorems about algebraic cycles on Abelian varieties. Invent. Math. 37 (1976), no. 3, 215-228.
    \bibitem{nagloo} D. Bl\'{a}zquez-Sanz, G. Casale, J. Freitag, J. Nagloo. A differential approach to Ax-Schanuel, I. Arxiv:2102.03384v4, to appear in the Annals of Mathematics.
\bibitem{bloch} S. Bloch.  Torsion algebraic cycles and a theorem of Roitman. Compositio Math. 39 (1979), no. 1, 107-127.
    \bibitem{blochsri}  S. Bloch, V. Srinivas. Remarks on
 correspondences and algebraic cycles,
 Amer. J. of Math. 105 (1983) 1235-1253.
\bibitem{colombopirola}  E.  Colombo, J. C. Naranjo,  G. P. Pirola. On the dimension of Voisin sets in the moduli space of abelian varieties. Math. Ann. 381 (2021), no. 1-2, 91-104.
\bibitem{greengriffiths} M. Green, Ph.  Griffiths.  An interesting 0-cycle. Duke Math. J. 119 (2003), no. 2, 261-313.
    \bibitem{deligne} P. Deligne. Th\'{e}orie de Hodge. II. (French) Inst. Hautes \'{E}tudes Sci. Publ. Math. No. 40 (1971), 5–57.
        \bibitem{fulavi}  L. Fu, R.  Laterveer, Ch.  Vial.
The generalized Franchetta conjecture for some hyper-K\"{a}hler varieties.
(With an appendix by the authors and Mingmin Shen.)
J. Math. Pures Appl. (9) 130 (2019), 1-35.
\bibitem{griffiths} Ph. Griffiths.   Infinitesimal variations of Hodge structure. III. Determinantal varieties and the infinitesimal invariant of normal functions. Compositio Math. 50 (1983), no. 2-3, 267-324.

    \bibitem{mazurmessing} B. Mazur, W. Messing.
{\it Universal extensions and one dimensional crystalline cohomology}.
Lecture Notes in Mathematics, Vol. 370. Springer-Verlag, Berlin-New York, (1974).
\bibitem{mestrano} N. Mestrano.  Conjecture de Franchetta forte.  Invent. Math. 87 (1987), no. 2, 365-376.
\bibitem{mukai} S. Mukai. Duality between $D(X)$ and $D(\widehat{X})$ with its application to Picard sheaves. Nagoya Math. J. 81 (1981), 153-175.
    \bibitem{nitsure} N. Nitsure. Deformation theory for vector bundles. in {\it  Moduli spaces and vector bundles}, 128–164,
London Math. Soc. Lecture Note Ser., 359, Cambridge Univ. Press, Cambridge, (2009).

\bibitem{roitman} A. A. Rojtman.  The torsion of the group of 0-cycles modulo rational equivalence. Ann. of Math. (2) 111 (1980), no. 3, 553-569.
    \bibitem{simpson} C. Simpson. Moduli of representations of the fundamental group of a smooth projective variety. II.
Inst. Hautes \'{E}tudes Sci. Publ. Math. No. 80 (1994), 5-79 (1995).
\bibitem{verni} M. Verni. On infinitesimal invariants of genus $4$ curves and the Griffiths-Pirola cycle,  arXiv:2607.12793.
\bibitem{voisinhodgebloc} C. Voisin. The generalized Hodge and Bloch conjectures are equivalent for general complete intersections, Annales scientifiques de l'ENS 46, fascicule 3 (2013), 449-475.
\bibitem{voisinincras} C. Voisin. Une remarque sur l'invariant infinit\'{e}simal des fonctions normales. C. R. Acad. Sci. Paris S\'{e}r. I Math. 307 (1988), no. 4, 157-160.
\bibitem{voisinchowab} C. Voisin. Chow ring and gonality of general abelian varieties. Ann. H. Lebesgue 1 (2018), 313-332.
    \bibitem{voisinsurfaces} C. Voisin. Variations de structure de Hodge et z\'{e}ro-cycles sur les surfaces g\'{e}n\'{e}rales.  Math. Ann. 299 (1994), no. 1, 77-103.
\bibitem{voisinchow} C. Voisin. {\it Chow rings, decomposition of the diagonal, and the topology of families}. Annals of Mathematics Studies, 187. Princeton University Press, Princeton, NJ, 2014. viii+163 pp. ISBN: 978-0-691-16051-1; 0-691-16051-1.
    \bibitem{voisinbook} C. Voisin.  {\it Hodge theory and complex algebraic geometry.} II.
Translated from the French by Leila Schneps. Cambridge Studies in Advanced Mathematics, 77. Cambridge University Press, Cambridge, (2003).
    \bibitem{voisinICM94} C. Voisin.  Variations of Hodge structure and algebraic cycles. {\it Proceedings of the International Congress of Mathematicians}, Vol. 1, 2 (Z\"{u}rich, 1994), 706-715, Birkh\"{a}user, Basel, 1995.
        \bibitem{hangxue} H. Xue. A quadratic point on the Jacobian of the
universal genus four curve, Math. Res. Lett., Volume 22, Number 5, 1563-1571, (2015).
\end{thebibliography}
\end{document}